\newcommand{\End}{\operatorname{End}}
\newcommand{\Hom}{\operatorname{Hom}}
\newcommand{\Id}{\operatorname{Id}}
\newcommand{\tr}{\operatorname{tr}}
\newcommand{\Tr}{\operatorname{Tr}}
\newcommand{\Z}{\mathbb{Z}}
\newcommand{\R}{\mathbb{R}}
\newcommand{\C}{\mathbb{C}}
\newcommand{\N}{\mathbb{N}}
\newcommand{\sll}{\mathfrak{sl}}
\newcommand{\g}{\ensuremath{\mathfrak{g}}}
\newcommand{\p}[1]{\ensuremath{\overline {#1}}}
\newcommand{\md}{\operatorname{\mathsf{d}}}
\newcommand{\mt}{\operatorname{\mathsf{t}}}
\newcommand{\fg}{\g}
\renewcommand{\dim}{\ensuremath{\operatorname{dim}}}
\newcommand{\Rad}{\operatorname{Rad}}
\newcommand{\ideal}{\mathcal{I}}
\newcommand{\cat}{\mathcal{C}}
\newcommand{\ob}{\operatorname{Ob}(\cat)}
\newcommand{\obideal}{\operatorname{Ob}(\ideal_{J})}
\newcommand{\unit}{\ensuremath{\mathds{1}}}
\newcommand{\glmn}{\ensuremath{\mathfrak{gl}(m|n)}}
\newcommand{\Proj}{\ensuremath{\mathcal{P}roj}}
\newcommand{\atyp}{\operatorname{atyp}}
\newcommand{\fe}{\mathfrak{e}}
\newcommand{\defect}{\operatorname{def}}
\newcommand{\sdim}{\operatorname{sdim}}
\newcommand{\rank}{\operatorname{rank}}
\newcommand{\fb}{\ensuremath{\mathfrak{b}}}
\newcommand{\ft}{\ensuremath{\mathfrak{t}}}
\newcommand{\fa}{\ensuremath{\mathfrak{a}}}
\newcommand{\0}{\bar 0}
\renewcommand{\1}{\bar 1}
\newcommand{\V}{\mathcal{V}}
\newcommand{\epsh}[2]
         {\begin{array}{c} \hspace{-1.3mm}
        \raisebox{-4pt}{\epsfig{figure=#1,height=#2}}
        \hspace{-1.9mm}\end{array}}
\newtheorem{definition}{Definition}[subsection]
\newtheorem{theorem}[definition]{Theorem}
\newtheorem{proposition}[definition]{Proposition}
\newtheorem{lemma}[definition]{Lemma}
\newtheorem{remark}[definition]{Remark}
\newtheorem{example}[definition]{Example}
\newtheorem{corollary}[definition]{Corollary}
\newtheorem{conjecture}[definition]{Conjecture}
\newcommand{\pic}[2]{
  \setlength{\unitlength}{#1}
  {\begin{array}{c} \hspace{-1.3mm}
        \raisebox{-4pt}{#2}
        \hspace{-1.9mm}\end{array}}}
\newcommand{\TrR}{
          \qbezier(4, 3)(4, 0)(6, 0)
          \qbezier(6, 0)(9, 0)(9, 4)
          \put(9,4){\vector(0,1)3}
          \multiput(0,3)(6,0){2}{\line(0,1){5}}
          \multiput(0,3)(0,5){2}{\line(1,0){6}}
          \qbezier(4, 8)(4, 11)(6, 11)
          \qbezier(6, 11)(9, 11)(9, 7)
          \put(2,11){\line(0,-1){3}}
          \put(2,3){\vector(0,-1){3}}
        }
\newcommand{\TrL}{
          \qbezier(5, 3)(5, 0)(3, 0)
          \qbezier(3, 0)(0, 0)(0, 4)
          \put(0,4){\vector(0,1)3}
          \multiput(9,3)(-6,0){2}{\line(0,1){5}}
          \multiput(3,3)(0,5){2}{\line(1,0){6}}
          \qbezier(5, 8)(5, 11)(3, 11)
          \qbezier(3, 11)(0, 11)(0, 7)
          \put(7,11){\line(0,-1){3}}
          \put(7,3){\vector(0,-1){3}}
        }
\numberwithin{equation}{subsection}
\begin{document}
\title{Generalized trace and modified dimension functions on ribbon categories} 
  \date{\today}
\author{Nathan Geer}
\address{Mathematics \& Statistics\\
  Utah State University \\
  Logan, Utah 84322, USA
and\\
Max-Planck-Institut f\"ur Mathematik\\
Vivatsgasse 7\\
53111 Bonn, Germany}
\thanks{Research of the first author was partially supported by NSF grants
DMS-0706725 and DMS-0968279}\
\email{nathan.geer@usu.edu}
\author{Jonathan Kujawa}
\address{Mathematics Department\\
University of Oklahoma\\
Norman, OK 73019}
\thanks{Research of the second author was partially supported by NSF grant
DMS-0734226}\
\email{kujawa@math.ou.edu}
\author{Bertrand Patureau-Mirand}
\address{LMAM, Universit\'e de Bretagne-Sud, BP 573\\
F-56017 Vannes, France}
\email{bertrand.patureau@univ-ubs.fr}

\begin{abstract}
In this paper we use topological techniques to construct generalized trace and modified dimension functions on ideals in certain ribbon categories.  Examples of such ribbon categories naturally arise in representation theory where the usual trace and dimension functions are zero, but these generalized trace and modified dimension functions are non-zero.  Such examples include categories of finite dimensional modules of certain Lie algebras and finite groups over a field of positive characteristic and categories of finite dimensional modules of basic Lie superalgebras over the complex numbers.  These modified dimensions can be interpreted categorically and are closely related to some basic notions from representation theory. 
\end{abstract}

\maketitle
\setcounter{tocdepth}{1}

\section{Introduction}

\subsection{}\label{SS:jibberjabber} 
It is well understood that there is a strong connection between representation theory and low dimensional topology. The path is especially well trodden going from algebra to topology (although
notable exceptions do exist \cite{BTT}).  In this paper we use the topological techniques of \cite{GPT} to define generalized  trace and modified dimension functions for certain ribbon categories and illustrate this theory with several examples arising in representation theory.  In these examples the usual trace and dimension functions are trivial, however the generalized trace and modified dimension functions are non-trivial and are closely related to the underlying representation theory.  

For our purposes we have in mind the
approach in \cite{Tu}.  The general idea is to start with some suitable
category (e.g. finite dimensional representations of some
algebraic object) which admits a tensor product and braiding isomorphisms
\[
c_{V,W}: V \otimes W \xrightarrow{\cong} W \otimes V
\]
for all $V$ and $W$ in the category, and then use that category to create
invariants of knots, links, 3-manifolds, etc.\ by interpreting the relevant
knot or link as a morphism in the category using the braiding to represent
crossings in the knot or link diagram.  If you apply this machine you quickly
discover two difficulties:
\begin{enumerate}
\item Many categories arising in algebra are symmetric (i.e.\ the square of
  the braiding is the identity) and, hence, yield only trivial invariants.
\item Many objects in these categories have categorical dimension zero and,
  again, necessarily yield only trival invariants.
\end{enumerate}  

Regarding the first, many natural categories fit within the above framework but happen to be symmetric (e.g. representations of finite groups, Lie algebras, etc.).  Therefore they have not received the same level of study from this point of view as, for example, representations of quantum groups.  A priori, however, there is no reason to exclude symmetric categories from consideration.  Indeed, we will see that the symmetric categories are equally interesting.  

Tackling the second problem, the first and third authors introduced a modified dimension for representations of quantum groups associated to Lie superalgebras \cite{GP2}, and with Turaev a generalization of this construction to include, for example, the quantum group for $\sll(2)$ at a root of unity \cite{GPT}.  The key point is that this modified dimension has properties analogous to the categorical dimension, but can still be nonzero even when the categorical dimension is zero.  This allows one to apply the above machine to objects with  categorical dimension zero and obtain nontrivial topological invariants.  The main goal of this paper is put these results into a categorical framework which includes the non-quantum setting and to provide several examples and applications within well studied categories in representation theory.  

It should also be noted that in \cite{GP4} the first and third authors constructed generalized trace and modified dimension functions on the ideal of projective modules in the category of finite dimensional modules over a Lie superalgebra of Type A or C.   The techniques of \cite{GP4} are based on quantum groups and the Kontsevich integral.    The work of this paper is a vast generalization of \cite{GP4} and is based on purely topological techniques.

\subsection{}\label{SS:mainresults}   The basic setting of our results is within a ribbon category $\cat$.  That is, roughly speaking, within a category $\cat$ with a tensor product bifunctor 
\[
- \otimes - : \cat  \times \cat  \to \cat, 
\] a unit object $\unit$, and a braiding; that is, for all $V$ and $W$ in $\cat$ we have canonical isomorphisms 
\[
c_{V,W}: V \otimes W \to W \otimes V.
\] Furthermore, $\cat$ admits a duality functor 
\[
V\mapsto V^{*},
\] and morphisms 
\begin{align*}
b_{V}: \unit \to V \otimes V^{*} & & d_{V}: V^{*}\otimes V \to \unit , \\
b'_{V}: \unit \to V^{*} \otimes V & & d'_{V}: V\otimes V^{*} \to \unit. 
\end{align*} This data is subject to suitable axioms.  Section~\ref{SS:ribboncats} for the precise definition.  

Such categories are ubiquitus in nature.  Let us mention just a few occurrances.  In algebra, examples of such categories include finite dimensional representations of groups, Lie (super)algebras, and quantum groups.  In algebraic geometry, they arise as the derived categories of perfect complexes over certain schemes \cite{Ba}, and in the theory of motives as Tannakian categories.  In topology they can be found in stable homotopy theory \cite{HPS} and, as discussed above, are an integral part of low dimensional topology. Ribbon categories also arise as examples of fusion categories \cite{ENO}.  On the other hand, ribbon categories subject to additional axioms are the source of topological quantum field theories.  These in turn give 3-manifold invariants \cite{RT}, and provides connections to physics \cite{Wi1,Wi2} and quantum computing \cite{FKW,FKLW}.

\subsection{}\label{SS:theory}  The paper has two main components.  In the first half (Sections~\ref{S:generalizedtraces}-\ref{S:ComRing}) we introduce the fundamental new concepts of the paper.  Let $\cat$ be a ribbon category and set $K=\End_{\cat}(\unit )$.  Throughout we assume $\cat$ is an Ab-category; i.e.\ for all $V,W$ in $\cat$, $\Hom_{\cat}(V,W)$ is an additive abelian group and both the tensor product functor and composition of morphisms are bilinear.  We also assume for convenience that $K$ is a field.  Neither assumption is particularly strict.  Even so, they are stricter than is necessary.  In particular, we discuss in Section~\ref{S:ComRing} how to drop the assumption that $K$ is a field.    

We call a full subcategory $\ideal$ of $\cat$ an \emph{ideal} if it is closed under retracts (i.e.\ if $W \in \ideal$ and $\alpha: X \to W$ and $\beta: W \to X$ satisfies $\beta \circ \alpha = \Id_{X}$, then $X \in \ideal$) and if $X$ in $\cat$ and $Y$ in $\ideal$ implies $X \otimes Y$ is in $\ideal$.   We will be most interested in the case when one fixes an object $J$ in $\cat$ and takes $\ideal_{J}$ to be the ideal of all objects which are retracts of $V \otimes X$ for some $X$ in $\cat$.

Our first new definition is as follows.  A \emph{trace} on an ideal $\ideal$ is a family of $K$-linear functions $\mt = \{\mt_{V} \}_{V \in \ideal}$
\[
\mt_{V}: \End_{\cat}(V) \to K,
\] which is suitably compatible with the tensor product and composition of morphisms (cf.\ Definition~\ref{D:trace}).  Given a trace on $\ideal$, $\mt=\{\mt_{V} \}_{V \in \ideal}$, we can then define a \emph{modified dimension function on $\ideal$} via
\begin{equation}\label{E:moddimfunction}
\md_{\mt}(V)=\mt_{V}(\Id_V).
\end{equation}

Our second new definition is as follows.  Assume that $J$ in $\cat$ admits a linear map 
\[
\mt_{J}: \End_{\cat}(J) \to K
\] which satisfies 
\begin{equation*}
\mt_{J}\left(  \left(d_{J}\otimes \Id_{J})\circ(\Id_{J^{*}}\otimes
h)\circ(b'_{J}\otimes \Id_{J} \right) \right) = \mt_{J} \left(  \left(\Id_{J}\otimes d'_{J}) \circ (h \otimes \Id_{J^{*}})\circ(\Id_{J}\otimes b_{J} \right)\right),
\end{equation*} for all $h \in \End_{\cat}(J\otimes J)$.  That is, in the
graphical calculus discussed in Section~\ref{SS:diagrams} we have  
$$\quad \mt_J\left(\pic{0.6ex}{
\begin{picture}(10,11)(1,0)
  \TrR\put(2,4.5){$h$}
\end{picture}}\right)=\mt_J\left(\pic{0.6ex}{
\begin{picture}(10,11)(1,0)
  \TrL\put(5,4.5){$h$}
\end{picture}}\right)$$
for all $h\in \End_{\cat}(J\otimes J)$.
Such a linear map is called an \emph{ambidextrous trace on $J$}. 

Our first main result shows that these two notions are intimately related.  Recall that $\ideal_{J}$ is the ideal whose objects are those which are retracts of $J \otimes X$ for some $X$ in $\cat$.
\begin{theorem} If $\ideal$ is an ideal of a ribbon category $\cat$ and
  $\{\mt_{V} \}_{V \in \ideal}$ is a trace on $\ideal$, then each $\mt_{V}$ is an
  ambidextrous trace on $V$.  Conversely, if $J$ in $\cat$ admits an
  ambidextrous trace, then there is a unique trace on $\ideal_{J}$ determined
  by that ambidextrous trace.
\end{theorem}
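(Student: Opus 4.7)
My plan is to handle the two directions of the theorem separately. The forward direction should be a quick consequence of the axioms of a trace on an ideal, whereas the backward direction requires an explicit construction whose key subtlety is well-definedness.

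For the forward direction, let $\{\mt_V\}_{V \in \ideal}$ be a trace on $\ideal$ and fix $V \in \ideal$ together with $h \in \End_{\cat}(V \otimes V)$. The partial trace axiom of a trace on an ideal says that if $V \in \ideal$ and $W \in \cat$ then $\mt_{V \otimes W}(\cdot) = \mt_V(\tr^R_W(\cdot))$, and the symmetric statement holds for $W \otimes V$. Since \emph{both} tensor factors of $V \otimes V$ lie in $\ideal$, one may apply the axiom to whichever factor one pleases. Applying it to the right factor gives $\mt_{V \otimes V}(h) = \mt_V(\tr^R(h))$, while applying it to the left factor gives $\mt_{V \otimes V}(h) = \mt_V(\tr^L(h))$. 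Equating the two expressions yields precisely the ambidextrous trace condition for $\mt_V$.

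For the backward direction, given $J$ with ambidextrous trace $\mt_J$ and $V \in \ideal_J$ presented as a retract of $J \otimes X$ via $\alpha: V \to J \otimes X$ and $\beta: J \otimes X \to V$ with $\beta \circ \alpha = \Id_V$, I would define for $f \in \End_{\cat}(V)$
\[
\mt_V(f) := \mt_J\bigl(\tr^R_X(\alpha \circ f \circ \beta)\bigr),
\]
where $\tr^R_X \colon \End_{\cat}(J \otimes X) \to \End_{\cat}(J)$ denotes the right partial trace. Uniqueness is then essentially forced: any trace $\mt'$ on $\ideal_J$ extending $\mt_J$ must satisfy, by cyclicity applied to $\alpha \colon V \to J \otimes X$ and $f \beta \colon J \otimes X \to V$ together with the partial trace axiom, $\mt'_V(f) = \mt'_{J \otimes X}(\alpha f \beta) = \mt_J(\tr^R_X(\alpha f \beta))$, which matches the proposed definition.

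The main technical hurdle, and the step I expect to be hardest, is proving that this definition is independent of the choice of data $(X, \alpha, \beta)$. My strategy is to compare two retract presentations $(X, \alpha, \beta)$ and $(Y, \alpha', \beta')$ using the graphical calculus: one rewrites $\tr^R_X(\alpha f \beta)$ and $\tr^R_Y(\alpha' f \beta')$ by sliding morphisms along the strands until the difference between them is encoded as the two partial traces of a single endomorphism of $J \otimes J$, at which point the ambidexterity hypothesis on $\mt_J$ identifies them. Once this well-definedness is in hand, the cyclicity and tensor-product axioms of a trace on $\ideal_J$ follow by the same type of diagrammatic manipulation: each axiom reduces, after choosing retract presentations, to a comparison of two ways of taking partial traces out of a suitable endomorphism of $J \otimes J$, and ambidexterity again closes the argument. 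The entire weight of the construction thus rests on the remarkable fact that the ambidexterity condition, nominally only about morphisms in $\End_{\cat}(J \otimes J)$, is already strong enough to govern all retract presentations in the ideal generated by $J$.
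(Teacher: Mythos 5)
Your forward direction has a genuine gap. The paper's definition of a trace on an ideal imposes exactly two axioms: the partial trace condition $\mt_{U\otimes W}(f)=\mt_U(\tr_R(f))$ for $U\in\ideal$ and $W\in\cat$, and cyclicity $\mt_V(g\circ f)=\mt_U(f\circ g)$. There is no axiomatic left-trace analogue. You assert that since both tensor factors of $V\otimes V$ lie in $\ideal$, ``one may apply the axiom to whichever factor one pleases,'' but the axiom as stated always traces out the \emph{second} tensor factor; applying it to $h\in\End_\cat(V\otimes V)$ can therefore only produce $\mt_V(\tr_R(h))$. To reach $\mt_V(\tr_L(h))$, you must first use cyclicity to obtain $\mt_{V\otimes V}(h)=\mt_{V\otimes V}(c_{V,V}^{-1}\circ h\circ c_{V,V})$, apply the right-trace axiom to that, and then prove the identity $\tr_R(c_{V,V}^{-1}\circ h\circ c_{V,V})=\tr_L(h)$ by a genuine computation in the ribbon category (invertibility of the twist, invertibility and naturality of the braiding). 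This last identity is the crux of the forward direction: it is precisely where the ribbon structure enters, and omitting it is not a harmless shorthand. Without it there is no reason the two partial traces of $h$ should agree after applying $\mt_V$, and that agreement is the entire content of ambidexterity.

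Your converse direction follows the paper's approach: define $\mt_V(f)=\mt_J(\tr_R(\alpha\circ f\circ\beta))$, prove well-definedness by encoding the comparison of two retract presentations as the left and right partial traces of a single endomorphism of $J\otimes J$ and invoking ambidexterity, verify cyclicity and the partial-trace axiom by similar encodings, and deduce uniqueness via cyclicity exactly as you describe. You have correctly identified well-definedness as the central difficulty and named the right mechanism, but this is an outline only — the actual argument requires exhibiting the specific morphisms $\psi,\eta\in\End_\cat(J\otimes J)$ whose left and right partial traces unwind to the desired comparisons, which you promise but do not produce.
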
 

\subsection{} We will be particularly interested when there is a canonical choice for a trace function.  Namely, assume $J$ is absolutely indecomposable (i.e.\ $\End_{\cat}(J)/\Rad \left( \End_{\cat}(J) \right) \cong K$) and the canonical projection
\[
t: \End_{\cat}(J) \to \End_{\cat}(J)/\Rad \left( \End_{\cat}(J) \right) \cong K
\] is an ambidextrous trace on $J$.   An absolutely indecomposable object whose canonical map gives an ambidextrous trace is called \emph{ambidextrous}.  Using the trace on
$\ideal_{J}$ defined by the previous theorem we use \eqref{E:moddimfunction} to define a modified dimension
function
\[
\md_{J}: \obideal \to K
\] by 
\[\md_{J}(V) = \mt_{V}\left(\Id_{V} \right).
\]

Applying this construction in the special case when $J=\unit$, the identity map $\End_{\cat}(\unit ) \to K$ defines an ambidextrous trace on $\ideal_{\unit} = \cat$ and we recover the familiar notions of categorical trace and dimension.  Thus the above setup generalizes these well studied functions.
We also see that results known for the categorical dimension hold for the modified dimension as well.  For example, we have the following results.
\begin{theorem}\label{T:dimprops}  Let $\cat$ be an abelian category, $J$ be ambidextrous and let $V$ be an object in $\ideal_{J}$ with $\End_{\cat}(V)=K$ and $d_{V}:V^{*}\otimes V \to \unit$ an epimorphism.  We then have the following results.
\begin{enumerate}
\item Let $ U \in \ideal_{V} \subseteq \ideal_J$.  If $\md_J(V)= 0$, then $\md_J(U)=0$.
\item The canonical epimorphism $d_{V}\otimes \Id_{J}: V^*\otimes V \otimes J\xrightarrow{} J\rightarrow 0$ splits if and only if $\md_J(V)\neq 0$. 
\item If $J$ is not projective in $\cat$ and $P$ is projective in $\cat$, then $P$ is an object of $\ideal_{J}$ and $\md_{J}(P)=0$.
\end{enumerate}
\end{theorem}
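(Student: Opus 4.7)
The plan is to prove the three parts in turn, with the main subtlety in part (2). For part (1), use the retraction $U \hookrightarrow V \otimes Y$ afforded by $U \in \ideal_V$: pick $\iota_U \colon U \to V \otimes Y$ and $\pi_U \colon V \otimes Y \to U$ with $\pi_U \iota_U = \Id_U$. Applying cyclicity of $\mt$ and the partial trace formula (valid because both $V$ and $V \otimes Y$ lie in $\ideal_J$) yields
\[
\md_J(U) = \mt_U(\pi_U \iota_U) = \mt_{V \otimes Y}(\iota_U \pi_U) = \mt_V\bigl(\operatorname{ptr}_Y(\iota_U \pi_U)\bigr).
\]
Since $\End_\cat(V) = K$, the partial trace $\operatorname{ptr}_Y(\iota_U \pi_U)$ is necessarily a scalar multiple $\lambda \Id_V$, so $\md_J(U) = \lambda \md_J(V) = 0$.

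For part (2), recast the splitting condition via the rigidity adjunction $\Hom_\cat(J, V^* \otimes V \otimes J) \cong \End_\cat(V \otimes J)$, $s \mapsto \tilde s$, under which the map $s \mapsto (d_V \otimes \Id_J) \circ s$ becomes the left partial trace $\operatorname{ptr}_V \colon \End_\cat(V \otimes J) \to \End_\cat(J)$. Splitting then amounts to $\Id_J$ lying in $\operatorname{Im}(\operatorname{ptr}_V)$. The key structural observation is that this image is a two-sided ideal of $\End_\cat(J)$; since $J$ is absolutely indecomposable, $\End_\cat(J)$ is local with residue field $K$ and $\mt_J$ is the canonical projection, so the image either equals $\End_\cat(J)$ (containing $\Id_J$) or sits inside $\Rad(\End_\cat(J)) = \ker(\mt_J)$. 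Using the identities
\[
\mt_J\bigl(\operatorname{ptr}_V(\tilde s)\bigr) = \mt_{V \otimes J}(\tilde s) = \mt_V\bigl(\operatorname{ptr}_J(\tilde s)\bigr) = \lambda_{\tilde s}\, \md_J(V),
\]
where $\operatorname{ptr}_J(\tilde s) = \lambda_{\tilde s} \Id_V$ (since $\End_\cat(V) = K$), one sees splitting is equivalent to the nonvanishing of $\mt_{V \otimes J}$ on $\End_\cat(V \otimes J)$. The $\Rightarrow$ direction follows at once from $\mt_J(\Id_J) = 1$; for $\Leftarrow$, one must exhibit some $\tilde s$ with $\mt_{V \otimes J}(\tilde s) \neq 0$, obtained by leveraging the retraction $V \hookrightarrow J \otimes X$, the identity $\md_J(V) = \mt_{J \otimes X}(\iota \pi)$, and the ambidexterity of $J$.

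For part (3), first $P \in \ideal_J$: the hypothesis that $d_V$ is epi combined with exactness of $\otimes$ in rigid categories shows $d_V \otimes \Id_P \colon V^* \otimes V \otimes P \to P$ is an epimorphism; projectivity of $P$ splits it, so $P$ is a retract of $V^* \otimes V \otimes P \in \ideal_J$. For the vanishing $\md_J(P) = 0$, argue by contrapositive: suppose $\md_J(P) \neq 0$ and write $P$ as a retract of some $J \otimes X$ via $\iota_P, \pi_P$. Then
\[
\mt_J\bigl(\operatorname{ptr}_X(\iota_P \pi_P)\bigr) = \mt_{J \otimes X}(\iota_P \pi_P) = \md_J(P) \neq 0,
\]
so $u := \operatorname{ptr}_X(\iota_P \pi_P) \notin \Rad(\End_\cat(J))$ is a unit in the local ring $\End_\cat(J)$. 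Expanding the partial trace exhibits $u$ as a composition $J \xrightarrow{\alpha} P \otimes X^* \xrightarrow{\beta} J$, so $J$ is a retract of $P \otimes X^*$ via $(u^{-1}\alpha, \beta)$. Since $P$ is projective and $- \otimes X^*$ preserves projectivity (being right adjoint to the exact functor $- \otimes X$), $P \otimes X^*$ is projective, whence $J$ is projective --- contradicting the hypothesis.

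The main obstacle I anticipate is the $\Leftarrow$ direction of (2): producing an endomorphism of $V \otimes J$ with nonzero modified trace from the bare hypothesis $\md_J(V) \neq 0$. In the degenerate regime where both categorical dimensions $\dim_\cat(V)$ and $\dim_\cat(J)$ vanish, naive candidates (the identity $\Id_{V \otimes J}$ or tensor products $f \otimes h$) all have modified trace zero, and a more delicate construction using the retraction $V \hookrightarrow J \otimes X$ combined with ambidexterity of $J$ is required.
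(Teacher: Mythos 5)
Your part (1) is correct and is in fact more direct than the paper's own route: the paper derives this statement (Corollary~\ref{C:dimthen0}) from the equivalence $\md_J(V)\neq 0 \Leftrightarrow \ideal_V = \ideal_J$ (Theorem~\ref{T:dim0} together with Lemma~\ref{L:dim0}), whereas your computation $\md_J(U)=\mt_U(\pi_U\iota_U)=\mt_{V\otimes Y}(\iota_U\pi_U)=\mt_V(\tr_R(\iota_U\pi_U))=\lambda\,\md_J(V)$ is a one-line consequence of the two trace axioms together with $\End_\cat(V)=K$. Part (3) is correct in substance and matches the paper's Lemma~\ref{L:projectives}/Corollary~\ref{C:mdvanishesonproj} in spirit; the one slip is the adjunction side: $-\otimes X^*$ preserves projectives because it has the exact functor $-\otimes X\cong-\otimes X^{**}$ as a \emph{right} adjoint (equivalently it is a \emph{left} adjoint of an exact functor), not because it is a right adjoint of $-\otimes X$. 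A cleaner justification, and the one the paper uses in Section~\ref{SS:projectives}, is simply that $\Proj$ is a tensor ideal.

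The ``$\Leftarrow$'' direction of part (2) contains a genuine gap, which you acknowledge. Your reduction --- that $\operatorname{Im}(\tr_L)\subseteq\End_\cat(J)$ is a two-sided ideal of the local ring $\End_\cat(J)$, hence either contains $\Id_J$ (giving the splitting) or lies in $\Rad(\End_\cat(J))=\ker\mt_J$ --- is correct, and it does reduce the problem to exhibiting one $\tilde s\in\End_\cat(V\otimes J)$ with $\mt_{V\otimes J}(\tilde s)\neq 0$. But this is exactly where the difficulty sits: every endomorphism that does not genuinely entangle the two tensor factors ($\Id_{V\otimes J}$, $\Id_V\otimes g$, $\Id_V\otimes\tr_R(\iota\pi)$, and their relatives) evaluates to something proportional to $\dim_\cat(V)$ or $\dim_\cat(J)$, which one cannot assume nonzero. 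The paper sidesteps the need for such a witness altogether. It first proves Theorem~\ref{T:CanSurIdeal}: the epimorphism $d_V\otimes\Id_W$ splits if and only if $\ideal_V=\ideal_W$. The ``if'' direction of that theorem rests on a relative-projectivity lifting argument (Lemma~\ref{L:PSplit}: any morphism out of an object of $\ideal_V$ lifts along an epimorphism that splits after applying $-\otimes V$), not on a trace computation. Combined with Lemma~\ref{L:dim0}, this gives Corollary~\ref{C:splitd}, which is part (2). If you do want to press your trace-theoretic reduction through, you will first need $\ideal_V=\ideal_J$ (i.e.\ Theorem~\ref{T:dim0}) to obtain a retraction $\alpha\colon J\to V\otimes Y$, $\beta\colon V\otimes Y\to J$ with $\beta\alpha=\Id_J$, and then take something like $\tilde s=(\Id_V\otimes\beta)\circ(c_{V,V}\otimes\Id_Y)\circ(\Id_V\otimes\alpha)$; the trace axioms then give $\mt_{V\otimes J}(\tilde s)=\langle\tr_R(\alpha\beta)\rangle\cdot\mt_V(\theta_V)$, which is a nonzero element of $K$. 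But by the time you have invoked Theorem~\ref{T:dim0} you have already done the real work, and the lifting argument is the shorter path.
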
  The above results may become more recognizable when we specialize to the particular case of finite dimensional representations of a finite group over an algebraically closed field of characteristic $p$ and take $J$ to be the trivial module.  In this setting the first statement of the theorem becomes the statement that if $p$ divides the dimension of $V$, then $p$ divides the dimension of any direct summand of $V \otimes X$ for any module $X$.  The second statement becomes the statement that the trivial module is a direct summand of $V^{*}\otimes V$ if and only if $p$ does not divide the dimension of $V$.  In this particular context these results were proven by Benson and Carlson \cite{BC}.  The third statement becomes the well known result that for a finite group whose order is divisible by $p$, the projective modules over a field of characteristic $p$ all have dimension divisible by $p$.  As another example, if we specialize Theorem~\ref{T:dimprops}(1) to when $\cat$ is the finite dimensional representations of a quantum group at a root of unity and again $J$ is the trivial module, we then recover a result of Andersen \cite[Lemma 3.6]{An}.  The above theorem demonstrates that these results fit within a more general categorical framework.

\subsection{}\label{SS:practice} 
  The second half of
the paper (Sections~\ref{S:Liesuperalgebras}-\ref{S:sl2}) is devoted to
applying the above theory to specific settings.  The examples were chosen
primarily on the basis of the areas of expertise among the authors.  It would
also be interesting to investigate the theory in other contexts.

We first consider the finite dimensional representatations of a basic classical Lie superalgebra $\fg$ over the complex numbers, $\C$ (see Section~\ref{S:Liesuperalgebras} for definitions). We prove that when $\fg = \glmn$ or a simple Lie superalgebra of type A or C, then the modified dimension defined categorically here coincides with the one defined in \cite{GP2} using supercharacters.  As a conseqence we obtain an explicit formula for $\md_{J}(L)$ whenever both $J$ and $L$ are simple supermodules of atypicality zero.  Furthermore, the formula implies it is nonzero.  In contrast, the categorical dimension is zero for such supermodules.  It was precisely to avoid problem (2) discussed above which lead the authors of \cite{GP2} to their original formulation of the modified dimension (in the quantum setting). 

In the setting of basic classical Lie superalgebras Kac and Wakimoto \cite{KW} introduced combinatorially defined integers called the defect of $\fg$ and the atypicality of a simple supermodule of $\fg$.  Let us write $\defect (\fg )$ for the defect of $\fg$ and $\atyp (L)$ for the atypicality of a simple supermodule $L$.  In general, the atypicality of a simple supermodule is among $0, 1, 2, \dotsc , \defect(\fg )$ and $\defect (\fg ) = \atyp (\C )$, where $\C$ is the trivial module.  Also, recall that if $L=L_{\0}\oplus L_{\1}$ is a supermodule, then the categorical dimension is given by the superdimension: $\sdim (L) = \dim_{\C}\left(L_{\0} \right)-\dim_{\C}\left(L_{\1} \right)$.  Kac and Wakimoto stated the following intriguing conjecture \cite[Conjecture 3.1]{KW}.
\begin{conjecture}  Let $\fg$ be a basic classical Lie superalgebra and let $L$ be a simple $\fg$-supermodule.  Then 
\[
\atyp (L) =\defect(\fg)  \text{ if and only if } \sdim (L)\neq 0. 
\]
\end{conjecture}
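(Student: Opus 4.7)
The plan is to unpack the conjecture via the modified dimension framework, exploiting that $\md_\C=\sdim$ on $\cat=\ideal_\C$. Indeed, since $\atyp(\C)=\defect(\fg)$ and $\End_\cat(\C)=K$, the identity $\End_\cat(\C)\to K$ is automatically an ambidextrous trace on $\C$, so $\ideal_\C=\cat$ and the induced modified dimension agrees with the categorical superdimension. If $\defect(\fg)=0$ the category is semisimple and the conjecture reduces to the classical nonvanishing of $\sdim$ on typical simples via Kac's character formula; assume henceforth $\defect(\fg)>0$.

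For the forward direction, $\atyp(L)<\defect(\fg)\Rightarrow\sdim(L)=0$, I argue by contrapositive and apply Theorem~\ref{T:dimprops}(1) with $J=\C$. The aim is to exhibit a single simple supermodule $V$ with $\atyp(V)<\defect(\fg)$, $\sdim(V)=0$, $\End_\cat(V)=K$ and $d_V$ epimorphic, such that every simple $L$ of submaximal atypicality is a retract of $V\otimes X$ for some $X\in\cat$. Natural candidates for $V$ come from simple subquotients of Kac modules obtained by partial odd reflections starting from $\C$, and the ideal-generation statement then reduces to identifying common simple summands in suitable tensor products at the level of supercharacters.

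For the converse, $\atyp(L)=\defect(\fg)\Rightarrow\sdim(L)\neq 0$, the modified dimension framework is less directly applicable, since the formula of \cite{GP2} discussed in Section~\ref{S:Liesuperalgebras} produces nonzero $\md_J(L)$ precisely in the \emph{atypicality zero} regime, whereas the conjecture concerns the opposite extreme. My approach would be to pick an ambidextrous simple $J$ of full atypicality, note that $\End_\cat(J)=K$ forces every trace on $\ideal_J$ to be a scalar multiple of $\md_J$, and deduce $\sdim|_{\ideal_J}=c\cdot\md_J|_{\ideal_J}$ for some $c\in K$. Given any $L$ of full atypicality lying in $\ideal_J$ with an independent computation of $\md_J(L)\neq 0$, the nonvanishing of $\sdim(L)$ follows provided $c\neq 0$.

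The main obstacle is that this last step requires showing $\sdim(J)\neq 0$ for at least one full-atypicality $J$, which is essentially the conjecture itself for that $J$, together with the ideal-generation property that all full-atypicality simples lie in $\ideal_J$. Both lie outside the purely categorical techniques of this paper and demand direct supercharacter calculations, a case-by-case analysis across the classification of basic classical Lie superalgebras, and detailed use of block structure together with Kac--Wakimoto combinatorics. The modified dimension framework thus reduces the conjecture to these explicit representation-theoretic computations but does not eliminate them.
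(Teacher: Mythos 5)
The statement you are asked to prove is \emph{not proved in the paper}: it is the Kac--Wakimoto conjecture, stated without proof, with the paper only recording that partial results are known \cite{BKN1, DS, KW} and that Serganova has announced a proof by independent methods \cite{Ser}. Your proposal correctly recognizes this --- you give a strategy, not a proof, and you candidly flag the circularity in the converse --- so there is no complete argument to grade here. What can be evaluated is how your framework discussion compares with the paper's own remarks in Section~\ref{SS:GKWconj} and Theorem~\ref{T:TypeAgenKWconj}.

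Your opening observation, that $\md_{\C}=\sdim$ on $\ideal_{\C}=\cat$ because $\C$ is ambidextrous with the identity as canonical trace, is exactly the paper's own recasting (Example~\ref{Ex:unit}, Section~\ref{SS:GKWconj}). The divergence comes in the forward direction. You propose to manufacture a single simple ``generator'' $V$ of submaximal atypicality, show $\sdim(V)=0$, and then push the vanishing through $\ideal_V$ via Theorem~\ref{T:dimprops}(1). This is possible in principle but requires two independent and nontrivial inputs (a vanishing supercharacter computation for $V$ and an ideal-generation statement) neither of which is supplied by the framework. The paper's actual route for this direction, carried out for $\glmn$ in Theorem~\ref{T:TypeAgenKWconj}, is cleaner: $L\in\ideal_{\C}$ gives $\V_{\fe}(L)\subseteq\V_{\fe}(\C)$ by Proposition~\ref{P:supp}, and the equality $\atyp(L)=\dim\V_{\fe}(L)$ (proved for $\glmn$ in \cite{BKN2}) then forces $\atyp(L)=\defect(\fg)$ whenever $\md_{\C}(L)=\sdim(L)\neq 0$. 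No generating module is needed, and the ideal-theoretic step is replaced by the monotonicity of support varieties. In contexts where the $\atyp=\dim\V_{\fe}$ equality is not yet established, both approaches stall, and this is precisely why the conjecture remains open outside special cases.

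For the converse direction your reduction is sound as far as it goes: if $J$ is an ambidextrous simple of full atypicality, restricting the trace $\sdim=\md_{\C}$ to $\ideal_J$ and invoking uniqueness of the trace on $\ideal_J$ (the Corollary following Theorem~\ref{T:ambi!Trace}) gives $\sdim|_{\ideal_J}=c\cdot\md_J|_{\ideal_J}$ with $c=\sdim(J)$. But then $\sdim(L)\neq 0$ requires $\sdim(J)\neq 0$, which is the conjecture for $J$ itself, plus the unproved fact that all full-atypicality simples lie in $\ideal_J$. You correctly identify this circularity. The paper does not resolve it either; it only establishes the conjecture for $\glmn$ in the atypicality-zero case (Theorem~\ref{T:typicalcase}) and for polynomial representations (Theorem~\ref{T:polys}), and leaves the general case to Serganova's announced proof, which proceeds by category equivalences, Zuckerman functors, and a character formula of Penkov rather than by the modified-dimension machinery. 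In short: your proposal accurately locates the conjecture within the paper's framework and honestly identifies where that framework stops short, but should not be read as a proof, and for the one direction it does organize, the paper's own support-variety argument is both simpler and more general.
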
  Partial results are known (e.g. \cite{BKN1, DS, KW}) and recently Serganova has announced a proof for the classical contragradiant Lie superalgebras using category equivalences, Zuckerman functors, and a character formula of Penkov \cite{Ser}.  In any case, our framework immediately suggests that this conjecture is but the ``top level''  (that is, when $J=\C$ and  $\atyp (J) = \defect (\fg )$) of the following generalized Kac-Wakimoto conjecture.

\begin{conjecture}
 Let $\fg$ be a basic classical Lie superalgebra, let $J$ be a simple $\fg$-supermodule which admits an ambidextrous trace and let $L \in \ideal_{J}$ be a simple $\fg$-supermodule.  Then 
\[
\atyp(L) = \atyp(J) \text{ if and only if } \md_J(L) \neq 0.
\]
\end{conjecture}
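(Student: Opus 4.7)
The plan is to combine the general categorical framework developed in the first half of the paper with the structural representation theory of basic classical Lie superalgebras. The proof would rely on three external ingredients: (i) the theory of cohomological support varieties for $\fg$-supermodules, which identifies $\atyp(L)$ with $\dim \V_L$ and satisfies $\V_{V \otimes W} \subseteq \V_V \cap \V_W$; (ii) Duflo--Serganova reduction functors, which systematically lower atypicality in a controlled way; and (iii) the original Kac--Wakimoto conjecture in the case $J=\C$, as announced by Serganova for the contragredient superalgebras.

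A preliminary observation from (i) is that whenever $L$ is a simple summand of $J \otimes X$ we have $\V_L \subseteq \V_J$, so $\atyp(L) \leq \atyp(J)$ holds automatically for every simple $L \in \ideal_J$. The conjecture therefore splits into two implications: (A) if $\atyp(L) < \atyp(J)$, then $\md_J(L) = 0$; and (B) if $\atyp(L) = \atyp(J)$, then $\md_J(L) \neq 0$.

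For direction (A), the idea is to generalize Theorem~\ref{T:dimprops}(3) to a relative setting. Let $k = \atyp(L)$ and consider the thick tensor ideal $\cat_{\leq k}$ consisting of supermodules whose composition factors all have atypicality at most $k$; this is a tensor ideal by the support variety inequality. Inside $\cat_{\leq k}$ one expects $L$ to be ``relatively projective'' while $J$, sitting outside $\cat_{\leq k}$, is not. I would then prove a tensor-ideal analogue of Theorem~\ref{T:dimprops}(3): the modified dimension associated to an ambidextrous trace on an object outside $\cat_{\leq k}$ vanishes on relatively projective objects of $\cat_{\leq k}$. The splitting argument for $d_V \otimes \Id_J$ from Theorem~\ref{T:dimprops}(2) should transfer to this restricted setting, yielding the vanishing for $L$.

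For direction (B), the plan is to bootstrap from the case $J = \C$. If $\atyp(J) = \defect(\fg)$, the original Kac--Wakimoto conjecture gives $\md_\C(L) = \sdim(L) \neq 0$, and one transfers non-vanishing to $\md_J(L)$ via the uniqueness clause in the first main theorem of the paper: the two modified dimensions arise from traces on a common ideal, and their values on matched atypicality simples should be proportional up to an explicit factor coming from $J$ itself. For general $J$ with $\atyp(J) < \defect(\fg)$, one would apply a Duflo--Serganova functor $DS_x$ for a suitable odd element $x$, descending to a basic classical Lie superalgebra $\fg_x$ of defect $\atyp(J)$ in which $J$ becomes maximally atypical, and then invoke the previous case. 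The main obstacle is precisely direction (B): it strictly subsumes the original Kac--Wakimoto conjecture, and beyond invoking Serganova's result one must control how ambidextrous traces interact with Duflo--Serganova reduction and with the tensor-ideal structure on $\cat$. The explicit supercharacter formula recovered in Section~\ref{S:Liesuperalgebras} for $\glmn$ and for the simple Lie superalgebras of type A and C provides strong evidence and a computational template, but a uniform argument across all basic classical cases appears to require genuinely new input.
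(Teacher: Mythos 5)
The statement you have been asked to prove is Conjecture~\ref{C:genKWconj}: it is a \emph{conjecture} in the paper, not a theorem, and the authors explicitly do not prove it. There is therefore no proof of the paper's to compare your proposal against. What the paper does prove is partial evidence in the case $\fg=\glmn$: one implication in general (Theorem~\ref{T:TypeAgenKWconj}), and both implications for polynomial representations (Theorem~\ref{T:polys}) and for atypicality-zero simples (Theorem~\ref{T:typicalcase}). Your own proposal is, candidly, a research program rather than a proof, and you acknowledge as much in your last sentence; that self-assessment is accurate.

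Comparing approaches where they overlap: your ``preliminary observation'' that $\atyp(L)\leq\atyp(J)$ for $L\in\ideal_J$ and your direction~(A) both hinge on support varieties and the identification $\atyp(L)=\dim\V_{\fe}(L)$. That identification is itself a conjecture of \cite{BKN1} (only proved for $\glmn$ in \cite{BKN2}), so even your ``automatic'' inequality is not automatic in general. Moreover, the paper's route to direction~(A) for $\glmn$ is simpler than yours: Proposition~\ref{P:supp}(2) shows directly that $\md_J(L)\neq 0$ forces $\V_{\fe}(L)=\V_{\fe}(J)$, hence equal atypicality, with no need for the relative-projectivity / bounded-atypicality tensor ideal $\cat_{\leq k}$ you introduce. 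Your $\cat_{\leq k}$ construction would additionally require proving a relative version of Theorem~\ref{T:dimprops}(3), which is not in the paper and not obviously within reach. For direction~(B), which you correctly identify as the genuinely hard one, the paper's only results go through explicit supercharacter formulas (Theorem~\ref{T:typicalcase}) and Littlewood--Richardson/crystal-graph combinatorics for polynomial $\glmn$-modules (Theorem~\ref{T:polys}); the Duflo--Serganova bootstrapping you outline, and the ``proportionality up to an explicit factor'' transfer between $\md_{\C}$ and $\md_J$, are not established anywhere in the paper and would need to overcome exactly the obstacles you name (controlling ambidextrous traces under $DS_x$, and the fact that (B) subsumes the full Kac--Wakimoto conjecture). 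In short: your proposal is a reasonable sketch of a strategy, broadly consistent with the spirit of Section~\ref{S:Liesuperalgebras}, but it does not constitute a proof and the paper does not claim one.
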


In the case of $\glmn$ we can provide the following strong evidence for the generalized Kac-Wakimoto conjecture.  We remind the reader that a simple $\fg$-supermodule is, by definition, polynomial if it appears as a composition factor of some tensor power of the natural module. 

\begin{theorem}\label{Intro:divisible}  Let $\fg =\glmn$, let $J$ be a simple $\fg$-supermodule which admits an ambidextrous trace, and let $L \in \ideal_{J}$ be a simple $\fg$-supermodule.  Then the following are true.
\begin{enumerate}
\item One always has $\atyp (L) \leq \atyp (J)$. 
\item If $\md_{J}(L) \neq 0$, then $\atyp (L) = \atyp (J)$.
\item If $\atyp (J)=0,$ then $\atyp (L) = \atyp (J)$ and $\md_{J}(L) \neq 0$.
\item If $J$ and $L$ are polynomial, then $J$ necessarily admits an ambidextrous trace (i.e.\ it does not have to be assumed), and $\md_{J}(L) \neq 0$ if and only if $\atyp (L) = \atyp (J)$.
\end{enumerate}
\end{theorem}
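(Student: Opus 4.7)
The plan is to base all four parts on the Duflo--Serganova functor. For each $x\in \fg_{\bar 1}$ with $x^{2}=0$, set $DS_{x}(V) := \ker(x_{V})/\operatorname{Im}(x_{V})$; this defines a symmetric monoidal functor from $\glmn$-supermodules to $\fg_{x}$-supermodules, where $\fg_{x}\cong \mathfrak{gl}(m{-}k|n{-}k)$ when $x$ has rank $k$ on $\C^{m|n}$. Two features are crucial: (a) $DS_{x}$ preserves tensor products, direct sums, and retracts; (b) by a theorem of Serganova, the associated variety $X_{V}:=\{x\mid DS_{x}(V)\neq 0\}$ satisfies $\dim X_{V}=\atyp(V)$ for every simple supermodule $V$.

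For (1), suppose $L$ is a retract of $J\otimes X$. Then $DS_{x}(L)$ is a retract of $DS_{x}(J)\otimes DS_{x}(X)$, so $DS_{x}(L)\neq 0$ forces $DS_{x}(J)\neq 0$. Hence $X_{L}\subseteq X_{J}$, giving $\atyp(L)\leq \atyp(J)$. For (2), combine this with Theorem~\ref{T:dimprops}(2): $\md_{J}(L)\neq 0$ is equivalent to $J$ being a retract of $L^{*}\otimes L\otimes J$. Applying $DS_{x}$ (and using that it commutes with duals) yields the reverse inclusion $X_{J}\subseteq X_{L}$, hence $\atyp(J)\leq \atyp(L)$; combined with (1) we obtain equality.

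For (3), $\atyp(J)=0$ means $J$ is typical, hence projective in the category of finite-dimensional $\glmn$-supermodules. Then $J\otimes X$ is projective for every $X$, so every object of $\ideal_{J}$ is projective. The simple projective $L$ must therefore be typical (atypical simples have non-simple projective covers), giving $\atyp(L)=0$. Non-vanishing of $\md_{J}(L)$ then follows from the explicit formula for $\md_{J}(L)$ in the atypicality-zero case, established earlier in the paper by comparison with the supercharacter-based formula of \cite{GP2}; that formula is manifestly nonzero for typical simples.

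For (4), the ``only if'' direction of the equivalence is (2). Two tasks remain: (i) show every polynomial simple $J$ admits an ambidextrous trace, and (ii) prove the ``if'' direction, namely that $\atyp(L)=\atyp(J)$ forces $\md_{J}(L)\neq 0$. For (i), the plan is to exploit special structure of polynomial representations --- for instance an anti-involution of $\glmn$ under which polynomial simples are suitably self-dual, together with the spherical structure of the category --- to force the two candidate values on $\End_{\cat}(J\otimes J)$ to agree even when $\dim(J)=0$ (the case where sphericality alone is insufficient). For (ii), the plan is induction on the common atypicality, with base case atypicality zero handled by (3); for the inductive step choose $x\in X_{J}\cap X_{L}$ of rank one and reduce to $DS_{x}(J)$ and $DS_{x}(L)$, which should again be polynomial simples for $\mathfrak{gl}(m{-}1|n{-}1)$ of atypicality one less. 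The main obstacle is this inductive step: it requires a \emph{quantitative} compatibility between $\md$ and the Duflo--Serganova reduction, going well beyond the set-level inclusions $X_{L}\subseteq X_{J}$ used in (1)--(2), and amounts to the core technical content of the generalized Kac--Wakimoto conjecture in the polynomial setting.
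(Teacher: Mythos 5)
Your parts (1)--(3) are correct, but (1) and (2) take a genuinely different route from the paper.  Where you use the Duflo--Serganova reduction $DS_{x}$ and the associated variety $X_{V}$ (with the fact $\dim X_{V}=\atyp(V)$ for simples), the paper instead uses the cohomological support varieties $\V_{\fe}(-)$ of Boe--Kujawa--Nakano together with \cite[Theorem 4.8.1]{BKN2} that $\dim \V_{\fe}(L)=\atyp(L)$.  The two routes are structurally parallel: both rest on a tensor-compatible, retract-preserving invariant with the same dimension formula, and the paper itself remarks in Section~\ref{SS:supports} that the Duflo--Serganova varieties ``have properties analogous to \eqref{E:varpropplus} and \eqref{E:varproptimes} and are similarly compatible with the techniques of this paper.''  Your argument for (2) via the splitting criterion in Theorem~\ref{T:dimprops}(2) is a fine substitute for the paper's appeal to Theorem~\ref{T:dim0}.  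Part (3) matches the paper's argument (typicality $\Rightarrow$ projectivity $\Rightarrow$ $\ideal_{J}=\Proj$, then the explicit formula from Theorem~\ref{T:typicalcase}).

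Part (4), however, is not actually proven in your proposal, and you acknowledge as much.  You leave both of the two tasks unresolved: for the existence of an ambidextrous trace on a polynomial simple $J$ you gesture at an anti-involution/sphericality argument without carrying it out, and for the ``if'' direction you propose an induction via $DS_{x}$ whose inductive step you explicitly flag as an open obstacle (``quantitative compatibility between $\md$ and the Duflo--Serganova reduction'').  The paper's proof of Theorem~\ref{T:polys} proceeds quite differently and avoids any quantitative statement about $\md$ under reduction.  It first shows via Remmel's result (Lemma~\ref{L:productofhookschurfunctions}) that hook Schur functions multiply by the ordinary Littlewood--Richardson rule, so Stembridge's multiplicity-freeness of $S_{\gamma}S_{\gamma}$ for rectangular $\gamma$ forces $\End_{\cat}(L_{\gamma}\otimes L_{\gamma})$ to be commutative, whence $L_{\gamma}$ is ambidextrous by Lemma~\ref{P:ambdim}(1).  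It then uses the crystal-graph description of translation functors (Brundan, Kujawa) in Proposition~\ref{P:polyideals} to show that adding one node within a fixed atypicality does not change the generated ideal, and a careful combinatorial chain argument identifies $\ideal_{L_{\lambda}}=\ideal_{L_{\sigma(k)}}$ for \emph{every} polynomial simple $L_{\lambda}$ of atypicality $k$, where $\sigma(k)$ is a rectangle.  Ambidexterity of $L_{\lambda}$ then propagates via Theorems~\ref{T:inducedtrace} and~\ref{T:ambi!Trace}, and both directions of the Kac--Wakimoto equivalence reduce to Lemma~\ref{L:dim0} together with part (2).  The decisive ingredients you are missing are precisely the Stembridge multiplicity-freeness input and the ideal-equality via crystal/translation combinatorics; without some replacement for these, part (4) remains a gap.
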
  That is, for $\glmn$ we can prove one direction of the generalized Kac-Wakimoto conjecture in general, and both directions for both atypicality zero and polynomial representations.

\subsection{}  We also examine the case when $\cat$ is the finite dimensional representations of a finite group $G$ over an algebraically closed field.  We consider the cases when $G$ is the cyclic group of order $p$ over a field of characteristic $p$, and the Klein four group over a field of characteristic two.  We use explicit calculations and the results of earlier authors to analyze the ideal structure of $\cat$ (cf.\ Proposition~\ref{P:kleingroup}) and to prove the existence of indecomposable modules whose canonical trace is ambidextrous.  We prove that $\cat$ has ambidextrous objects and, perhaps most intriguingly, see by direct calculation that a certain family of two dimensional indecomposable modules of the Klein four group rather unexpectedly admits an ambidextrous trace. 

We also consider the case when $\cat$ is the finite dimensional representations of the Lie algebra $\sll_{2}(k)$ over an algebraically closed field of characteristic $p>2$.  In this case we focus on the simple $\sll_{2}(k)$-modules.  We analyze the ideal structure of $\cat$ (Theorem~\ref{T:sl2ideals}) and obtain the following complete classification of which simple $\sll_{2}(k)$-modules admit an ambidextrous trace.

\begin{theorem}\label{Intro:sl2mods} A simple $\sll (2)$-module admits a nontrivial ambidextrous trace if and only if it is either restricted or projective.
\end{theorem}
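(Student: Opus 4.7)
The plan is to combine Theorem~\ref{T:sl2ideals}, the classification of ideals of $\cat$, with the general theory of ambidextrous traces developed in Sections~\ref{S:generalizedtraces}--\ref{S:ComRing}, together with the structure of finite-dimensional $\sll_2(k)$-modules and its reduced enveloping algebras.

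For the ``if'' direction I would treat two cases. If $L = L(\lambda)$ is restricted with $0 \leq \lambda \leq p-2$, then $\dim_k L = \lambda + 1$ is invertible in $K$. Because $\cat$ is symmetric (hence spherical), the identity
$$\tr_L\bigl(\text{left partial trace of }h\bigr) \;=\; \tr_{L \otimes L}(h) \;=\; \tr_L\bigl(\text{right partial trace of }h\bigr)$$
combined with $\End_{\cat}(L) = K$ forces the two partial traces $\End_{\cat}(L \otimes L) \to K$ to agree; hence the identity map $\End_{\cat}(L) \to K$ is a nontrivial ambidextrous trace on $L$. The one remaining restricted simple is the Steinberg module $L(p-1)$, which is projective in $\cat$ and hence is handled by the second case. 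For a projective simple $L$, the ideal $\ideal_L$ is contained in the ideal of projective objects, on which there is a canonical nontrivial trace, constructed along the lines of the projective trace referenced in Section~\ref{SS:jibberjabber} for basic Lie superalgebras and recovered here by the general machinery of Theorem~\ref{T:dimprops}; its restriction to $L$ furnishes the required ambidextrous trace.

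For the ``only if'' direction, assume $L$ is simple but neither restricted nor projective. I would exhibit $h \in \End_{\cat}(L \otimes L)$ whose left and right partial traces in $\End_{\cat}(L) = K$ are distinct scalars; the ambidextrous condition then forces any candidate $\mt_L$ to vanish on $\Id_L$ and hence to be identically zero. Theorem~\ref{T:sl2ideals} supplies the description of $\ideal_L$ and of the indecomposable decomposition of $L \otimes L$, and the hypothesis that $L$ is neither restricted nor projective ensures the existence of an indecomposable non-simple summand of $L \otimes L$ whose internal structure breaks the left--right symmetry of partial traces. The endomorphism $h$ is taken to be the projection onto this summand, after which the discrepancy can be computed by reducing to a single block of the appropriate reduced enveloping algebra.

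The main obstacle is the converse direction: one has to locate the distinguishing summand of $L \otimes L$ and verify that the resulting asymmetry is nontrivial in $K$. The explicit ideal classification of Theorem~\ref{T:sl2ideals} is crucial here, since it constrains both the simples appearing in $\ideal_L$ and the summand structure of $L \otimes L$ tightly enough to make the partial-trace calculation concrete.
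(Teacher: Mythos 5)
Your high-level plan (reduce to the block classification, handle restricted and projective simples separately for the ``if'' direction, and exhibit a left--right asymmetric endomorphism of $L\otimes L$ for the ``only if'' direction) matches the architecture of the paper's proof, but there are two substantive gaps.

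First, in the ``if'' direction for a projective simple $L$, you invoke a ``canonical nontrivial trace'' on $\Proj$ as though it were already available. It is not: Theorem~\ref{T:dimprops} only lists consequences \emph{once} an ambi object is given, and the projective trace of \cite{GP4} alluded to in Section~\ref{SS:jibberjabber} is specific to complex Lie superalgebras and uses quantum groups and the Kontsevich integral, which do not transfer to $\sll_2(k)$ in characteristic $p$. In the paper this step requires a genuine construction: one fixes a semisimple character $\chi$, shows by a direct character/weight computation that $V_{\chi,\lambda}\otimes V_{\chi,\lambda}$ is multiplicity free (as a sum of pairwise non-isomorphic baby Verma modules), applies Remark~\ref{R:ambi}(1) to conclude $V_{\chi,\lambda}$ is ambi, notes $\ideal_{V_{\chi,\lambda}}=\Proj$, and then transports the resulting trace to any projective simple via Theorems~\ref{T:ambi!Trace}/\ref{T:inducedtrace} and Lemma~\ref{L:dim0}. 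Your sketch omits all of this.

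Second, and more seriously, in the ``only if'' direction your proposed witness $h$ --- a projection onto an indecomposable non-simple summand of $L\otimes L$ --- cannot detect failure of ambidexterity. In Lemma~\ref{L:nilpotenttensors} the summands $W_k$ are the generalized eigenspaces of the Casimir-type operator $\Omega_{1,2}$, hence canonical, hence stable under the braiding $c_{L,L}$; and any multiplicity-one indecomposable summand is automatically $c$-stable. Projection onto a $c$-stable summand commutes with $c$ and so lies in the $+1$-eigenspace of conjugation by $c$, which by \eqref{E:flipconj} forces $\tr_L(h)=\tr_R(h)$ automatically. The asymmetry the paper exploits lives in the $-1$-eigenspace: because $V_{2\chi,0}$ occurs once in $S^2V$ and once in $\Lambda^2V$, the morphism sending the symmetric copy isomorphically onto the skew copy (and killing everything else) anticommutes with $c$, and a nontrivial explicit computation (the $\Tr_R(f)=-\Id_V$ step, which uses the matrix of $\Omega_{1,2}$ on a weight space and the constraint $f\circ\Omega_{1,2}=0$) shows its right partial trace is nonzero. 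This is the essential mechanism your proposal misses, and it is exactly what Remark~\ref{R:ambi}(2) is designed to capture. Finally, the extension from $V_{\chi,0}$ to all nonprojective simples in the regular nilpotent block is not a ``block reduction'' but an ideal-theoretic argument: all such simples generate the same ideal (Theorem~\ref{T:sl2ideals}(3)), so by Lemma~\ref{L:dim0} if one were ambi they all would be.
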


\subsection{}  The results of this paper raise a number of intriguing questions.  For example, it remains mysterious which objects and ideals in a category admit a nontrivial ambidextrous trace.  Further examples need to be developed to shed light on this question.   

As another example, the generalized Kac-Wakimoto conjecture and the results of Theorem~\ref{Intro:divisible} suggest a close relationship between the modified dimension and classical results on the vanishing of the categorical dimension.  This includes the well known Kac-Weisfeiler conjecture for Lie algebras in characteristic $p$ (proved by Premet in \cite{Pr2}), the DeConcini, Kac and Procesi conjecture for quantum groups at a root of unity \cite{Kr}, $2$ and $p$ divisibility for Lie superalgebras \cite{BKN2} and \cite{WZ}, and well known $p$ divisibility results for modular representations of finite groups.  In many of these contexts one has the powerful tool of support varieties.   The results just mentioned and the theory presented here are both compatible with these support variety theories and it would be interesting to further develop this relationship.

In a third direction, the category of ribbon graphs is naturally a $2$-category.  The notion of $2$-categories have recently received a great deal of attention in representation theory due to work of Khovanov and Lauda \cite{KL1,KL2, KL3, KL4} and Rouquier \cite{Ro} in their study of categorification of quantum groups and their representations.  Of particular relevance to the work here, Ganter and Kapranov categorified the notion of the categorical trace and used this to study representations and character theory in $2$-categories \cite{GK}.  It would be interesting to determine a categorification of our more general notion of a trace. 

Finally, we recall that one can define the radical of a ribbon category $\cat$ as a certain ideal defined by the categorical trace.  The resulting quotient category plays an important role in representation theory. For example, Andersen constructed a three dimensional quantum field theory from the category of tilting modules for a quantum group at a root of unity via this technique \cite{An}.  In recent work Deligne \cite{De} and Knop \cite{Kn} used this approach to show how to construct categories which interpolate among the representation categories of the symmetric groups and $GL(n, \mathbb{F}_{q})$, respectively.  In a similar fashion if one has a trace on an ideal in the sense of this paper one can define the ``radical'' of the ideal using this trace.  We would expect that our construction would allow one to refine the above technique by allowing one to consider subquotient categories of $\cat$.


\section{Ribbon Ab-categories and the graphical calculus}\label{S:ribboncats}

In this section we provide the framework within which the results of this paper are developed.  In Section~\ref{SS:ribboncats} we introduce the notion of a ribbon Ab-category.  Many familar categories in representation theory (e.g.\ finite dimensional representations of Lie (super)algebras, groups, and quantum groups) are ribbon Ab-categories.  A key feature of ribbon Ab-categories is the ability to represent morphisms via diagrams.  Manipulations of the diagrams correspond to identities among morphisms and this provides a powerful tool for understanding morphisms in $\cat$.  We provide a brief overview of this graphical calculus in Section~\ref{SS:diagrams}.

\subsection{Ribbon Ab-categories}\label{SS:ribboncats}
For notation and the general setup of ribbon Ab-categories our references are \cite{Tu} and \cite{Kas}.  A \emph{tensor category} $\cat$ is a category equipped with a covariant
bifunctor $\otimes :\cat \times \cat\rightarrow \cat$ called the tensor product, a unit object $\unit$, an associativity constraint, and left and
right unit constraints such that the Triangle and Pentagon Axioms hold (see \cite[XI.2]{Kas}).  In particular, for any $V$ in $\cat$, $\unit \otimes V$ and $V \otimes \unit$ are canonically isomorphic to $V$.

A \emph{braiding} on a tensor category $\cat$ consists of a family of isomorphisms $\{c_{V,W}: V \otimes W \rightarrow W\otimes V \}$, defined for each pair of objects $V,W$ which satisfy the Hexagon Axiom \cite[XIII.1 (1.3-1.4)]{Kas} as well as the naturality condition expressed in the commutative diagram \cite[(XIII.1.2)]{Kas}.
We say a tensor category is \emph{braided} if it has a braiding.  We call a tensor category \emph{symmetric} if $c_{W, V} \circ c_{V,W} = \Id_{V \otimes W}$ for all $V$ and $W$ in $\cat$.

A tensor category $\cat$ has \emph{duality} if for each object $V$ in $\cat$ there exits an object $V^{*}$ and morphisms 
$$b_{V}: \unit \rightarrow V\otimes V^{*} \: \text{ and } \: d_{V}: V^{*}\otimes V \rightarrow \unit$$
satisfying relations
\begin{align}\label{E:defduality}
(\Id_V\otimes d_V)\circ (b_V\otimes \Id_V)&=\Id_V \\ \notag
 (d_V \otimes \Id_{V^*}) \circ (\Id_{V^*}\otimes b_V) & =\Id_{V^*}.
\end{align}
A \emph{twist} in a braided tensor category $\cat$ with duality is a family $\{ \theta_{V}:V\rightarrow V \}$ of natural isomorphisms defined for each object $V$ of $\cat$ satisfying relations \cite[(XIV.3.1-3.2)]{Kas}. Let us point out that the existence of twists is equivalent to having functorial isomorphisms $V \xrightarrow{\cong} V^{**}$ for all $V$ in $\cat$ (cf.\ \cite[Section 2.2]{BK}).

A \emph{ribbon category} is a braided tensor category with duality and a twist.   
A tensor category $\cat$ is said to be an \emph{Ab-category} if for any pair
of objects $V,W$ of $\cat$ the set of morphism $\Hom_{\cat }(V,W)$ is an additive
abelian group and the composition and tensor product of morphisms are
bilinear.  

Let us end this section with two useful observations about ribbon Ab-categories.  The first is that if $\cat$ is an abelian category, then by \cite[Proposition 2.1.8]{BK} the tensor product functor is necessarily exact in both entries.  The second is that any symmetric tensor category with duality is necessarily a ribbon category \cite[Corollary 2.2.3]{BK}.  As a consequence, many categories which arise in representation theory  are ribbon Ab-categories.  

\subsection{The ground ring of $\cat$}\label{SS:groundring} Let $\cat$ be a ribbon Ab-category. Composition of morphisms
induces a commutative ring structure on 
\[
K=\End_{\cat }(\unit).
\] This ring is called
the \emph{ground ring} of $\cat$. In this paper we will assume for convenience that $K$ is a field.  However, the setup and theorems of the first three sections are valid even when $K$ is an arbitrary commutative ring.  In Section \ref{S:ComRing} we discuss which changes are needed in order to achieve this generality.  

 We note that for any pair of objects
$V$ and $W$ of $\cat$ the abelian group $\Hom_{\cat }(V,W)$ becomes a left $K$-module. 
Namely, for any $k\in K$ and $f\in \Hom_{\cat}(V,W)$ the action is defined by $kf=k\otimes f$ and using the left and right unit constraints.

\subsection{Absolutely irreducible and Indecomposable objects}  An object $V$ of $\cat$ is called \emph{absolutely irreducible} if $\End_{\cat}(V)= K \Id_V$.   We say it is \emph{absolutely indecomposable} if 
\[
\End_{\cat}(V)/\Rad\left( \End_{\cat}(V) \right) \cong K
\] 
where $\Rad(\End_\cat(V))$ is the radical of $\End_\cat(V)$.  In either case we write 
\begin{equation}\label{E:canonicalmap}
\langle \; \rangle : \End_{\cat}(V) \to K
\end{equation}
for the canonical linear map.   

\emph{Throughout we assume that if $J$ is absolutely indecomposable, then the elements of the radical of $\End_{\cat}(J)$ are nilpotent.}  This is not a very restrictive assumption.  For example, if $\End_{\cat}(J)$ is artinian (e.g.\ if $J$ is of finite length), then every element of the radical of $\End_{\cat}(J)$ is nilpotent.  If the reader prefers, all statements involving an absolutely indecomposable object can be specialized to the assumption that $J$ is absolutely irreducible and then no extra assumptions are required.

\subsection{The categorical trace and dimension}\label{SS:categoricaltrace}  For brevity and convenience we define following morphisms in $\cat$,
\begin{align*}
 b'_{V} &: 
  \unit\rightarrow V^*\otimes V, \\
   d_{V}' &: V\otimes V^{*}\rightarrow \unit,
\end{align*}
given by 
\begin{align*}
b'_V &=(\Id_{V^*}\otimes \theta_V) \circ c_{V,V^*} \circ b_V \\
d'_V & = d_V \circ c_{V,V^*} \circ (\theta_V\otimes \Id_{V^*})
\end{align*}

Then for any $V$ in $\cat$ and $f \in \End_{\cat}(V)$,  the \emph{categorical trace on $\cat$} is given by 
$$\tr_{\cat} (f)= d'_{V} \circ ( f \otimes \Id_{V^*}) \circ b_{V} \in K.$$ 
In particular, define $\dim_{\cat}:\ob \rightarrow K$ by
\[
\dim_\cat(V)=\tr_{\cat}(\Id_V).
\]
  We call $\dim_\cat(V)$ the \emph{categorical dimension}
of $V$.

\subsection{} 
In Lemma~\ref{P:ambdim}\eqref{LI:amb2} it is assumed that the categorical
trace vanishes on the radical of $\End_{\cat}(J)$ for an absolutely
indecomposable object.  The following result yields a general scenario where
this occurs.  It is presumably well known to experts and we wrote down a proof based on one given by Deligne \cite[Lemma 3.5]{De} in the symmetric setting.
\begin{lemma}\label{L:deligneprop} 
  Let $\cat$ be a ribbon category which is an abelian category.
Let
\begin{equation}\label{E:SES}
\begin{CD}
0 @>>> A' @>r>> A @>s>> A'' @>>> 0 \\
@.     @VVf'V    @VVfV     @VVf''V    @. \\
0 @>>> A' @>r>> A @>s>> A'' @>>> 0 
\end{CD}
\end{equation}
be a morphism of short exact sequences in $\cat$.  Then 
\[
\Tr_{\cat } (f) = \Tr_{\cat }(f') + \Tr_{\cat }(f'').
\]  
\end{lemma}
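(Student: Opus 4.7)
The plan is to prove additivity of the categorical trace on the given morphism of short exact sequences by combining the graphical calculus of Section~\ref{SS:diagrams} with the exactness properties of the abelian ribbon category $\cat$. Diagrammatically, $\tr_\cat(f)=d'_A\circ(f\otimes \Id_{A^{*}})\circ b_A$ is represented by a closed loop labelled by $f$, and the goal is to cut this loop into loops labelled by $f'$ and $f''$.

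First I would assemble the exactness inputs. Since $\cat$ is abelian, the tensor product is exact in both variables (by the remark at the end of Section~\ref{SS:ribboncats}), and duality $V\mapsto V^{*}$, being an anti-equivalence of $\cat$ with quasi-inverse supplied by the natural isomorphism $V\cong V^{**}$ coming from the twist, is also exact. Dualizing \eqref{E:SES} therefore yields the exact sequence
\begin{equation*}
0\longrightarrow (A'')^{*}\xrightarrow{s^{*}} A^{*}\xrightarrow{r^{*}} (A')^{*}\longrightarrow 0,
\end{equation*}
and the naturality of the structure morphisms gives compatibility identities such as $(\Id_A\otimes r^{*})\circ b_A=(r\otimes \Id_{(A')^{*}})\circ b_{A'}$ together with analogues for $s$ and for $d,d'$. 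Using these together with the ladder relations $f\circ r=r\circ f'$ and $s\circ f=f''\circ s$, one rewrites the loop $\tr_\cat(f)$ as a sum of three kinds of contributions: a loop supported on the subobject $r(A')$ giving $\tr_\cat(f')$, a loop on the quotient $A''$ giving $\tr_\cat(f'')$, and residual cross-terms each of the form $\tr_\cat(r\circ g\circ s)$ for some morphism $g\in \Hom_\cat(A'',A')$. Each cross-term vanishes by the cyclicity of the categorical trace in a ribbon category: $\tr_\cat(r\circ g\circ s)=\tr_\cat(g\circ s\circ r)=0$, since $s\circ r=0$ by exactness of \eqref{E:SES}.

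The main obstacle, and the technical core of the argument, is making this decomposition rigorous in the absence of a splitting of \eqref{E:SES}. If the sequence split, one could choose a section $\iota\colon A''\to A$ and a retraction $\pi\colon A\to A'$ with $\Id_A=r\circ\pi+\iota\circ s$ and plug this into the loop to obtain the decomposition directly; the single cross-term $r\circ(\pi\circ f\circ\iota)\circ s$ would then vanish by the cyclicity argument above. Without a splitting, one must instead work inside the filtered object $A\otimes A^{*}$ furnished by the dual exact sequence: the element $b_A$ fits into a filtered diagram whose associated graded pieces are $b_{A'}$ and $b_{A''}$, and $f\otimes \Id_{A^{*}}$ respects the corresponding filtration by the ladder. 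The ambiguity of lifting through this filtration is a torsor over $\Hom_\cat(A'',A')$, and the cyclicity computation shows every such lift produces the same value for the trace. This is the point at which the ribbon structure is genuinely used, and the argument mirrors Deligne's treatment of the symmetric case in \cite[Lemma~3.5]{De}.
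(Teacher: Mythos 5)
Your overall plan --- use exactness of the tensor product and of duality to decompose the trace loop inside $A\otimes A^{*}$, in the spirit of Deligne --- is the same route the paper takes, and your split-case computation is fine. But the lemma has content only when \eqref{E:SES} does not split, and there your argument stops being a proof. Without a splitting there is no identity $\Id_A=r\circ\pi+\iota\circ s$ to insert into the loop, so no cross-term of the form $\tr_\cat(r\circ g\circ s)$ ever materializes, and the cyclicity observation you lean on can only be invoked after a splitting has been chosen. Your substitute --- a ``filtration'' on $A\otimes A^{*}$ whose lifting ambiguity is ``a torsor over $\Hom_\cat(A'',A')$'' --- is asserted rather than established, and the torsor picture is not obviously the right one: the morphism $\hat f\colon\unit\to A\otimes A^{*}$ corresponding to $f$ under $\Hom_\cat(A,A)\cong\Hom_\cat(\unit,A\otimes A^{*})$ is already given, and the task is to project it to the sub- and quotient layers, not to lift anything.

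The missing content is the following. Tensoring \eqref{E:SES} with its dual short exact sequence yields a $3\times 3$ bicomplex exact in all rows and columns; set $F=\operatorname{Ker}\bigl((s\otimes 1)\circ(1\otimes r^{*})\bigr)\subseteq A\otimes A^{*}$. One must show (i) that the ladder relations force $\hat f$ to factor through $F$; (ii) that $F=\operatorname{Im}(r\otimes 1)+\operatorname{Im}(1\otimes s^{*})$, by a diagram chase in the bicomplex --- this identity is the genuine replacement for a splitting; and (iii) that the canonical maps $\varphi'\colon F\to A'\otimes(A')^{*}$ and $\varphi''\colon F\to A''\otimes(A'')^{*}$ obtained by inverting $r\otimes 1$ and $1\otimes s^{*}$ send $\hat f$ to $\hat{f'}$ and $\hat{f''}$, while pairing with the duality morphisms on the two pieces and summing reproduces $\tr_\cat(f)$. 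Steps (ii) and (iii), not cyclicity, are where the proof actually lives, and they do not appear in your proposal.
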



\begin{proof}  Before proving the proposition, we first set the groundwork.  Recall that for $M,N$ in $\cat$ one has canonical isomorphisms of $K$-modules:
\[
\Hom_{\cat}(M,N) \cong \Hom_{\cat}(\unit , N \otimes M^{*}) \cong \Hom_{\cat}(N^{*}, M^{*}).
\]  Given $g \in \Hom_{\cat}(M,N)$ we write  $\hat{g}$ for the corresponding element in $\Hom_{\cat}(\unit, N \otimes M^{*})$ and $g^{*}$ for the corresponding element in $\Hom_{\cat}(N^{*}, M^{*})$.   To avoid confusion, given $g \in \Hom_{\cat}(M,M)$ we will write $\tr_{M}(g)$ for $\tr_{\cat}(g)$ and, in a slight abuse of notation, we write $\tr_{M}(\hat{g})$ using the above isomorphism.  Finally, we note that for such a morphism, $d'_{M} \circ \hat{g} = \tr_{M}(g)$.

Consider the short exact sequence 
\begin{equation}\label{E:SES2}
0 \to  A' \xrightarrow{r} A \xrightarrow{s} A'' \to 0.
\end{equation}
Applying the duality functor yields another short exact sequence and, recalling from Section~\ref{SS:ribboncats} that tensor functor is exact, we can tensor these together to obtain the following bicomplex which is exact everywhere:
\begin{equation}\label{E:exactsquare}
\begin{CD}
@.       0 @. 0 @. 0 @.  \\
@.     @VVV    @VVV     @VVV    @. \\
0 @>>> A' \otimes (A'')^{*} @>r \otimes 1>> A \otimes (A'')^{*} @>s \otimes 1 >> A'' \otimes (A'')^{*} @>>> 0 \\
@.     @VV1 \otimes s^{*}V    @VV1 \otimes s^{*}V     @VV1 \otimes s^{*}V    @. \\
0 @>>> A' \otimes A^{*} @>r \otimes 1>> A \otimes A^{*} @>s \otimes 1 >> A'' \otimes A^{*} @>>> 0 \\
@.     @VV1 \otimes r^{*}V    @VV1 \otimes r^{*}V     @VV1 \otimes r^{*}V    @. \\
0 @>>> A' \otimes (A')^{*} @>r \otimes 1>> A \otimes (A')^{*} @>s\otimes 1>> A'' \otimes (A')^{*}  @>>> 0 \\
@.     @VVV    @VVV     @VVV    @. \\
@.       0 @. 0 @. 0 @.  
\end{CD}
\end{equation}  Let 
\[
s \otimes r^{*} = s\otimes 1 \circ 1 \otimes r^{*}: A \otimes A^{*} \to A'' \otimes \left(A' \right)^{*}
\] and let $F= \operatorname{Ker}(s \otimes r^{*})$.   It is an easy exercise to verify that for $f$ in \eqref{E:SES} one has $ s\otimes r^{*} \circ \hat{f} = 0$; that is, the image of $\hat{f}$ lies in $F$.

A diagram chase using this diagram verifies that 
\begin{equation}\label{E:ontoF}
F = \operatorname{Im} \left(r \otimes 1 \right) + \operatorname{Im} \left(1 \otimes s^{*} \right).
\end{equation}
Namely, let $x \in F$.  Then $s \otimes 1 \left(1 \otimes r^{*} (x) \right)=0$ and, hence, $1 \otimes r^{*}(x)$ lies in the image of $r \otimes 1$.  Fix $w_{1} \in A' \otimes (A')^{*}$ so that $r \otimes 1 (w_{1}) = 1 \otimes r^{*} (x)$.  Since $1 \otimes r^{*}$ is surjective, we can choose $w_{2} \in A' \otimes A^{*}$ so that $1 \otimes r^{*}(w_{2})=w_{1}$.  Let $x'=r\otimes 1 (w_{2})$.  Now consider $x-x_{1} \in A \otimes A^{*}$.  We then have 
\begin{align*}
1\otimes r^{*}(x-x') &= 1 \otimes r^{*}(x) - 1 \otimes r^{*}(x') \\
                        & = 1 \otimes r^{*}(x) - 1 \otimes r^{*}(r\otimes 1 (w_{2})) \\
                        & = 1 \otimes r^{*}(x) - r\otimes 1 (1 \otimes r^{*}(w_{2})) \\
                        & = 1 \otimes r^{*}(x) - r\otimes 1 (w_{1}) =0.
\end{align*} Hence $x-x'$ lies in the kernel of $1 \otimes r^{*}$ and, hence, the image of $1 \otimes s^{*}$.  Therefore, since $x'$ was in the image of $r \otimes 1$, it follows that $x$ lies in the sum of the images of the morphisms $r \otimes 1$ and $1 \otimes s^{*}$, just as claimed.

We now note that we have morphisms 
\begin{align*}
\varphi'&: F \to A' \otimes \left(A' \right)^{*} \\
\varphi''&: F \to A'' \otimes \left(A'' \right)^{*}
\end{align*} defined as follows.

Let $x \in F$.  Then $0 = s \otimes r^{*}(x) = s \otimes 1 \left(1\otimes r^{*}(x) \right)$. From the bottom row of \eqref{E:exactsquare} the fact that $ s \otimes 1 \left(1\otimes r^{*}(x) \right) = 0$ implies that $1 \otimes r^{*}(x)$ lies in the image of the injective morphism $r \otimes 1$.  We can then define $\varphi' = (r \otimes 1)^{-1} \circ 1 \otimes r^{*}$. 

Similarly, $0 = s \otimes r^{*}(x) = 1 \otimes r^{*} \left(s\otimes 1(x) \right)$.  Using the rightmost column of \eqref{E:exactsquare} we see that $s\otimes 1(x)$ lies in the image of the injective morphism $ 1 \otimes s^{*}$.  We can then define $\varphi'' = (1 \otimes s^{*})^{-1} \circ s \otimes 1$. 

Since the image of $\hat{f}$ lies in $F$ we can form the morphisms $\varphi' \circ \hat{f}: \unit \to A' \otimes (A')^{*}$ and $\varphi'' \circ \hat{f}: \unit \to A'' \otimes (A'')^{*}$. We now consider  the morphism
\[
\Gamma=d'_{A'} \circ \varphi' \circ \hat{f}  + d'_{A''}\circ  \varphi'' \circ \hat{f} : \unit \to \unit 
\]  Using the commutivity of \eqref{E:SES} and the diagram calculus discussed in Section~\ref{SS:diagrams} (or direct calculation), one has that $ \varphi' \circ \hat{f} = \hat{f'}$ and $ \varphi'' \circ \hat{f} = \hat{f''}$.  Therefore we have 
\begin{equation}\label{E:onehalf}
\Gamma = \tr_{A'}(f')+\tr_{A''}(f'').
\end{equation}

On the other hand, if $u \in \unit$, then by \eqref{E:ontoF} we have that $\hat{f}(u) = r \otimes 1 (v) + 1 \otimes s^{*}(w)$ for some $v \in A' \otimes A^{*}$ and $w \in A \otimes (A'')^{*}$.  Therefore we have 
\begin{align*}
\Gamma (u)          &= (d_{A'} \circ \varphi' \circ r \otimes 1)(v) + (d_{A''}\circ \varphi'' \circ 1 \otimes s^{*})(w) \\
                    &= (d_{A'} \circ 1 \otimes r^{*})(v) + (d_{A''} \circ s \otimes 1)(w) \\
                    &= (d_{A} \circ r \otimes 1)(v) +   (d_{A} \circ 1 \otimes s^{*})(w) \\
                    &= d_{A} \circ \left( r\otimes 1 (v) + 1 \otimes s^{*}(w) \right) \\
                    &= (d_{A} \circ \hat{f})(u)\\
                    &= \tr_{A}(f)(u)               
\end{align*}
Where the first equality holds because when you expand out the expression, two terms are zero; the second equality follows from an elementary simplification; and the third equality follows from diagram calculus which shows that $d_{A'} \circ 1 \otimes r^{*} = d_{A} \circ r \otimes 1$ and $d_{A''} \circ s \otimes 1 = d_{A} \circ 1 \otimes s^{*}$.  Thus we have 
\begin{equation}\label{E:otherhalf}
\Gamma  = \tr_{A}(f).
\end{equation}  Equating \eqref{E:onehalf} and \eqref{E:otherhalf} yields the first statement of the proposition.

\end{proof}

We then have the following corollary.

\begin{corollary}\label{C:deligne}  If $\cat$ is as in the previous lemma and $J$ in $\cat$ is an absolutely indecomposable object, then $\tr_{\cat}$ is identically zero on $\Rad \left(\End_{\cat}(J) \right)$.
\end{corollary}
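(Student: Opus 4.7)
The plan is to exploit the standing hypothesis that elements of $\Rad(\End_{\cat}(J))$ are nilpotent, together with the additivity of the categorical trace over short exact sequences established by Lemma~\ref{L:deligneprop}. The trace of the zero endomorphism of any object is zero (the defining composite $d'\circ(0\otimes\Id)\circ b$ contains the factor $0\otimes\Id = 0$), so it will suffice to present $f \in \Rad(\End_{\cat}(J))$ as being ``built out of'' zero maps via a finite filtration and then telescope.

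Concretely, given $f \in \Rad(\End_{\cat}(J))$, I would fix $n \geq 1$ with $f^n = 0$ and form, inside the abelian category $\cat$, the descending chain of subobjects
\[
J = J_0 \supseteq J_1 \supseteq \cdots \supseteq J_{n-1} \supseteq J_n = 0, \qquad J_i := \operatorname{im}(f^i).
\]
Since $f(J_i) = J_{i+1} \subseteq J_i$, the endomorphism $f$ restricts to an endomorphism $f_i$ of each $J_i$, and the same inclusion forces the endomorphism that $f_i$ induces on the quotient $J_i/J_{i+1}$ to be the zero morphism. Thus for each $i$ the triple $(f_{i+1}, f_i, 0)$ is an endomorphism of the short exact sequence $0 \to J_{i+1} \to J_i \to J_i/J_{i+1} \to 0$.

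Applying Lemma~\ref{L:deligneprop} to each of these sequences will then give
\[
\tr_{\cat}(f_i) = \tr_{\cat}(f_{i+1}) + \tr_{\cat}(0_{J_i/J_{i+1}}) = \tr_{\cat}(f_{i+1}),
\]
and these identities telescope to $\tr_{\cat}(f) = \tr_{\cat}(f_0) = \tr_{\cat}(f_{n-1})$. The base case is immediate: $f^n = 0$ forces $J_{n-1} = \operatorname{im}(f^{n-1}) \subseteq \ker(f)$, so $f_{n-1} = 0$ and hence $\tr_{\cat}(f) = 0$, as claimed.

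The only real obstacle is the bookkeeping required to invoke Lemma~\ref{L:deligneprop}: one must verify inside $\cat$ that $f$ genuinely restricts to each $f_i$, that the two squares in the induced morphism of short exact sequences commute, and that the map induced on $J_i/J_{i+1}$ really is zero. Each of these follows from $f(J_i) \subseteq J_{i+1}$ together with the universal properties of image and cokernel, so there is no deep difficulty here, only care with the categorical formalism. Note that the absolute indecomposability of $J$ is used only implicitly, via the hypothesis it licenses that radical elements are nilpotent; the argument itself applies to any nilpotent endomorphism of any object in an abelian ribbon category.
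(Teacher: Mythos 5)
Your proof is correct and takes essentially the same route as the paper: reduce to a nilpotent endomorphism and apply Lemma~\ref{L:deligneprop} across an $f$-stable short exact sequence. The only cosmetic difference is that the paper uses the single sequence $0\to\ker(f)\to J\to J/\ker(f)\to 0$ with the morphism $(0,f,\bar f)$ and inducts on the nilpotency degree, while you filter by the images $\operatorname{im}(f^i)$ (putting the zero map on the quotient side) and telescope; the two are interchangeable.
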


\begin{proof} If $f \in \Rad \left(\End_{\cat}(J) \right)$, then $f$ is a nilpotent morphism (cf.\ our assumption in Section~\ref{SS:groundring}). Say $f^{n} = 0$.  Now consider the morphism of short exact sequences in $\cat$ given by 
\begin{equation*}
\begin{CD}
0 @>>> \operatorname{Ker}(f) @>r>> A @>s>> A/\operatorname{Ker}(f) @>>> 0 \\
@.     @VV0V    @VVfV     @VV\bar{f}V    @. \\
0 @>>> \operatorname{Ker}(f) @>r>> A @>s>> A/\operatorname{Ker}(f) @>>> 0, 
\end{CD}
\end{equation*}
where $r$ and $s$ are the canonical morphisms, and $\bar{f}$ is the morphism induced by $f$.   A straightforward induction on $n$ using the above lemma proves that $\tr_{\cat}(f)=0$.

\end{proof}

\subsection{The Diagrammatic Calculus}\label{SS:diagrams}Next we will discuss how one can represent morphisms in the category $\cat$ with graphs.  The algebraic identities of the ribbon category give a graphical calculus in which graphs representing morphisms in $\cat$ can easily be minipulated.  We will not present the complete calculus here; however, we will provide the important relations which are required for the proofs.   For more details on this graphical calculus see \cite[Chapter XIV]{Kas}.

We represent a morphism $f:U \rightarrow V$ by a box with two vertical arrows as in Figure~\ref{SF:repf},  where $U,V$ are the colors of the arrows and $f$ is the color of the box.  Note that we follow the convention that the graphical depiction of morphisms should be read from bottom to top.  In the special case of $\Id_{V}: V \to V$ we commonly omit the box labeled by $\Id_{V}$ and simply draw a plain vertical line colored by $V$.  The composition of two morphisms is obtained by putting one box on top of the other. Also note that here and elsewhere we use the symbol $\dot{=}$ between two graphs to mean that the corresponding morphisms in $\cat$ are equal.  
    \begin{figure}[h]
  \centering
  \subfloat[Graph for $f$]{\label{SF:repf} \hspace{16pt} 
    $\xymatrix{ \ar[d]^{V}\\ *+[F]\txt{  $f$ } \ar[d]^{U}\\   \: }$\hspace{16pt} }                
  \subfloat[Graph for $f\circ g$]{\label{SF:fg1}\hspace{10pt} \xymatrix{ 
    \ar[d]^{W}\\
    *+[F]\txt{  $g\circ f$ } \ar[d]^{U}\\
    \: } \put(7,-37) {$\dot{=}$}\hspace{4ex} 
   $\put(-3,-74){ \put(6,24){{\small $f$}} \put(7,44){{\small $g$}} \epsfig{figure=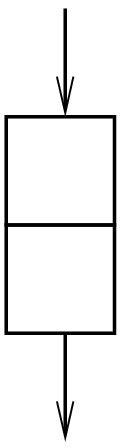,height=14ex}}$
    \hspace{20pt} 
    }
     \subfloat[Graph for $f\otimes g$]{\label{SF:fg2}
     \hspace{10pt}
     \xymatrix{  \ar[d]^{V}\\
    *+[F]\txt{  $f$ } \ar[d]^{U}\\
    \: }\xymatrix{ 
    \ar[d]^{W}\\
    *+[F]\txt{\;$\put(2,-6) \textit{g} $\; } \ar[d]^{V}\\
    \: }\put(0,-37){$\dot{=}$}\hspace{2ex}\xymatrix{ 
    \ar@< 8pt>[d]^{W}
    \ar@< -8pt>[d]_{V}\\
    *+[F]\txt{  $f \otimes g$ } \ar@< 8pt>[d]^{V}
    \ar@< -8pt>[d]_{U}\\
    \: }\hspace{10pt}
}
    \subfloat[Graph of $f$]{\label{SF:repf2}
   \hspace{10pt} \xymatrix{
  \ar@< 8pt>[d]^{W_m}_{... \hspace{1pt}}
  \ar@< -8pt>[d]_{W_1}\\
  *+[F]\txt{ \: \; f \; \;} \ar@< 8pt>[d]^{V_n}_{... \hspace{1pt}}
  \ar@< -8pt>[d]_{V_1}\\
  \: }
    \hspace{10pt}}
  \caption{}
\end{figure}

  The tensor product of two morphisms is represented by setting
the two corresponding graphs next to each other.  For example, if $f:U
\rightarrow V$ and $g:V \rightarrow W$ are morphism of $\cat$ then we
represent $g\circ f$ and $f\otimes g$ by Figures \ref{SF:fg1} and \ref{SF:fg2}, respectively.  
In general, a morphism
$f:V_1\otimes \dotsb \otimes V_n \rightarrow W_1\otimes \dotsb \otimes W_m$ in
$\cat$ can be represented by the box and
arrows given in Figure \ref{SF:repf2}.

The braiding $c_{V,W}$ and its inverse $c_{V,W}^{-1}$ are represented by Figure~\ref{F:repbraiding}.
\begin{figure}[h]
 \centering
 $\put(-7,-12){{\small $V$}} \put(33,-12){{\small $W$}} \put(4,-28){{\large  $c_{V,W}$}}  \epsh{cross}{6ex}  \hspace{60pt} {\put(-8,-12){{\small $W$}} \put(33,-12){{\small $V$}} \put(4,-28){{\large  $c_{V,W}^{-1}$}} \epsh{negcross}{6ex}}$
  \caption{}
  \label{F:repbraiding}
\end{figure}
The invertibility and naturality of  $c_{V,W}$ are expressed in Figures \ref{F:invbraiding} and \ref{F:natbraiding}, respectively.  The naturality of $c_{V,W}^{-1}$ gives a similar expression.
\begin{figure}[h]
 \centering
 \subfloat[Invertibility of the Braiding]{\label{F:invbraiding}
\hspace{10pt} 
$ {\put(-6,-23){{\small $V$}} \put(37,-23){{\small $W$}}\put(68,-23){{\small $V$}} \put(112,-23){{\small $W$}} \put(142,-23){{\small $V$}} \put(183,-23){{\small $W$}}
\put(53,0) {\large$\dot{=}$} \put(123,0){\large$\dot{=}$}\epsh{crossinv}{12ex}}
$
\hspace{10pt} 
}
 \subfloat[Naturality of the Braiding]{  \label{F:natbraiding}
 \hspace{10pt} 
$\put(5,0){{ $g$}} \put(37,0){{ $f$}} \put(95,0){{ $f$}} \put(130,0){{ $g$}} 
\put(-5,40){{\small $W'$}}  \put(50,40){{\small $V'$}}    \put(88,40){{\small $W'$}}  \put(141,40){{\small $V'$}} 
 \put(4,-40){{\small $V$}}  \put(49,-40){{\small $W$}}    \put(91,-40){{\small $V$}}  \put(145,-40){{\small $W$}}   \put(69,0){\large$\dot{=}$}\epsh{naturalB}{18ex}
$
\hspace{10pt} 
}
\caption{}
\label{F:braidings}
\end{figure} 
Next we assign graphs to the duality morphisms.  
The morphisms 
$b_{V},  d_{V},  b'_{V}$ and $d'_{V}$
  are represented by the graphs in Figure \ref{F:duality}.
\begin{figure}[h]
  \begin{center}
  $  \put(17,-20){{ $b_V$}}   \put(77,-20){{ $d_V$}}   \put(145,-20){{ $b'_V$}}   \put(223,-20){{ $d'_V$}} \vcenter{\xymatrix@R=10pt @C=4pt{\ar@{-}
    `d^r[]^<<{V} `r^u[rr] |--{\SelectTips{cm}{}\object@{>}} [rr] & &\\ & &}} 
    \hspace{30pt} 
        \vcenter{\xymatrix@R=10pt @C=4pt{& &\\& &\ar@{-} `u^l[]^<<{V} `l^d[ll] |{\SelectTips{cm}{}\object@{<}} [ll] }}
        \hspace{30pt}  
     \vcenter{\xymatrix@R=10pt @C=4pt{\ar@{-} `d^r[] `r^u[rr] |{\SelectTips{cm}{}\object@{<}} [rr]^>>{V} & &\\ & &}} \hspace{30pt} 
     \vcenter{\xymatrix@R=10pt @C=4pt{& &\\& &\ar@{-} `u^l[] `l^d[ll] |--{\SelectTips{cm}{}\object@{>}} [ll]^>>{V} }}$
   \caption{}
    \label{F:duality}
  \end{center}
\end{figure}
The relations given in \eqref{E:defduality} are represented by the graphical expressions in Figure \ref{F:defduality}.
\begin{figure}[h]
\centering
$\put(12,-17){{\small $V$}}  \put(52,-17){{\small $V$}}  \put(31,0){$\dot{=}$} \put(116,17){{\small $V$}}  \put(161,17){{\small $V$}}  \put(136,0){$\dot{=}$}  \epsh{dualityeq}{9ex}$ 
\caption{}
\label{F:defduality}
\end{figure}

To illustrate how one uses these graphs and the graphical calculus, in Figure \ref{F:twistcal} we compute the graph corresponding to the twist.
\begin{figure}
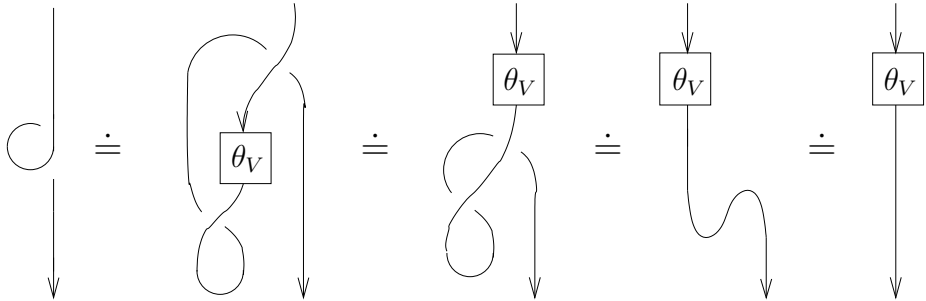

\centering
 $ \put(31,0){ \large ${ \dot{=}}$} \put(83,-3){{ $\theta_V$}}   \put(136,0){\large$\dot{=}$}
\put(186,26){{ $\theta_V$}} \put(224,0){\large$\dot{=}$}\put(250,26){{ $\theta_V$}} \put(305,0){\large$\dot{=}$} \put(330,26){{ $\theta_V$}}\epsh{twistcalB}{22ex}
$
\caption{All of the edges are colored with $V$.}
\label{F:twistcal}
\end{figure}
The first equality in Figure \ref{F:twistcal} is by definition, the second and third by naturality of the braiding, the last one by Figure \ref{F:defduality}.  Similarly, one can show that the equalities represented in Figures \ref{F:twist2} and \ref{F:twistinv} hold.    In particular, note that this example shows if the twist is nontrivial then a kink can not be undone by the graphical calculus.   However, the invertibility of  $\theta_V$ implies that the morphisms represented in Figure \ref{F:twistinv2} are equal.
\begin{figure}
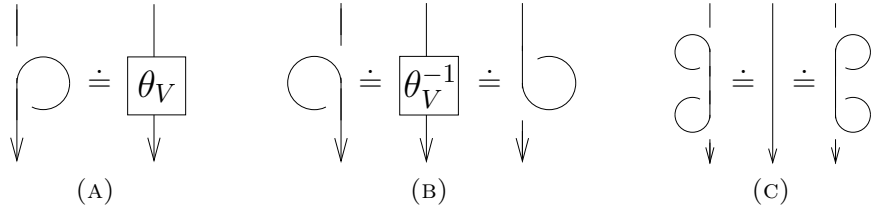

\centering
 \subfloat[]{\label{F:twist2}
 \hspace{10pt} 
$\put(32,0){{$\dot{=}$}}
 \put(51,-2){{\large $\theta_V$}} 
\epsh{twist2}{12ex}
$
\hspace{10pt} 
}
 \subfloat[]{\label{F:twistinv}
 \hspace{10pt} 
$\put(29,0){{$\dot{=}$}}
 \put(46,-2){{\large $\theta_V^{-1}$}}
 \put(74,0){{$\dot{=}$}} 
\epsh{twistinv}{12ex}
$
\hspace{10pt} 
}
 \subfloat[]{\label{F:twistinv2}
 \hspace{10pt} 
$\put(23,0){{$\dot{=}$}}
 \put(46,0){{$\dot{=}$}} 
\epsh{twistinv2}{12ex}
$
\hspace{10pt} 
}
\caption{All of the edges are colored with $V$.}
\label{}
\end{figure}

The graphs above can be described in the language of ribbon graphs and their diagrams (see \cite{Kas,Tu}).  As Figures \ref{F:braidings} and \ref{F:defduality} suggest one can consider these graphs up to isotopy.  This is similar to isotopy of framed knots or links in $\R^3$ where the plane determined by a box should always be parallel to the plane $\R\times \R\times 0$ and the line determined by the base of a box should always be parallel to the line $\R\times 0\times 0$.  Here ``framed'' refers to the fact that kinks can not be undone.    

As an exercise in the graphical calculus we give an equality which will be used later.  For morphisms $f:V\rightarrow W, g:W\rightarrow U, h:U\rightarrow V$ we have:
\begin{equation}
\label{E:partracefggf}
\tr_R\left( (\Id_V\otimes f)\circ c_{V,V}^{-1} \circ (\Id_V\otimes hg\right)=\tr_R\left( (\Id_V\otimes gf) \circ c_{V,V}^{-1} \circ (\Id_V\otimes h)\right),
\end{equation} where $\tr_{R}$ is defined in \eqref{E:trR}.
 Figure~\ref{F:partracefggf} is a graphical representation of Equation~\ref{E:partracefggf}.   The proof is provided in Figure~\ref{F:partracefggfproof} using the graphical calculus.  Specifically, we use the naturality and invertibility of the braiding, the naturality of the twist ($g \circ \theta_W=\theta_U \circ g$), and the definition of $d'_W$ to prove of \eqref{E:partracefggf}.  The left (resp. right) side of Figure~\ref{F:partracefggf} represents the same morphism as the left (resp. right) side of Figure~\ref{F:partracefggfproof}.
\begin{figure}
\centering
$\put(-5,48){{\small $V$}} \put(87,48){{\small $V$}}
\put(26,36){{\large $f$}}
\put(26,-16){{\large $h$}}
\put(26,-36){{\large $g$}}
\put(119,37){{\large $g$}}
\put(119,14){{\large $f$}}
\put(119,-38){{\large $h$}}
\put(-7,-48){{\small $V$}} \put(86,-48){{\small $V$}}
\put(53,-48){{\small $W$}} \put(147,-48){{\small $U$}}
\put(69,0) {\large$\dot{=}$} \epsh{partracefggf}{22ex}
$
\caption{}
\label{F:partracefggf}
\end{figure}
\begin{figure}
\centering
$
\put(53,40){{\small $W$}}
 \put(-7,70){{\small $V$}} \put(-7,-70){{\small $V$}}
 \put(19,-57){{\small $U$}}
  \put(20,43){{\large $\theta_W$}}
 \put(25,23){{\large $f$}}
\put(25,-26){{\large $h$}}
\put(58,-65){{\large $g$}}
\put(183,19){{\small $W$}}
 \put(115,70){{\small $V$}} \put(115,-70){{\small $V$}}
 \put(169,-70){{\small $U$}}
  \put(141,-4){{\large $\theta_W$}}
 \put(146,-24){{\large $f$}}
\put(146,-72){{\large $h$}}
\put(165,65){{\large $g$}}
\put(296,-50){{\small $U$}}
 \put(239,70){{\small $V$}} \put(239,-70){{\small $V$}}
  \put(268,32){{\large $\theta_U$}}
  \put(272,12){{\large $g$}}
 \put(270,-9){{\large $f$}}
\put(270,-57){{\large $h$}}
\put(90,0) {\large$\dot{=}$} \put(215,0) {\large$\dot{=}$} 
\epsh{partracefggfproof}{34ex}
$
\caption{}
\label{F:partracefggfproof}
\end{figure}

\section{Generalized traces}\label{S:generalizedtraces} In this section we introduce the fundamental definitions of the paper.  The rough idea is to generalize the notion of the categorical trace by defining a generalized trace to be a family of linear functions on a full subcategory which have the desired properties.  As we will see, the key idea is the notion of an ambidextrous trace.
\subsection{Ideals}\label{SS:ideals}  We first introduce the notion of an ideal in a ribbon Ab-category.
\begin{definition}\label{D:ideal} We say a full subcategory $\ideal$ of a ribbon Ab-category $\cat$ is an \emph{ideal} if the following two conditions are met:
\begin{enumerate}
\item  If $V$ is an object of $\ideal$ and $W$ is any object of $\cat$, then $V\otimes W$ is an object of $\ideal$.
\item  $\ideal$ is closed under retracts; that is, if $V$ is an object of $\ideal$, $W$ is any object of $\cat$, and if there exists morphisms $f:W \rightarrow V$, $g:V\rightarrow W$ such that $g \circ f=\Id_W$, then $W$ is an object of $\ideal$.
\end{enumerate}

\end{definition}

Let us point out that a ribbon Ab-category is not neccessarily additive and, hence, we do not require that an ideal be closed under direct sums as the reader might expect.

The main example of an ideal which is used in this paper is constructed as follows.  Let $V$ be a fixed object of $\cat$. Let $\ideal_V$ be the full subcategory of all objects $U$ satisfying  the property that there exists an object $W$ and morphisms $\alpha:U\rightarrow V\otimes W$ and $\beta:V\otimes W \rightarrow U$ with $\beta\circ \alpha=\Id_U$.  It is not difficult to verify $\ideal_{V}$ forms an ideal.  

The following lemma records some basic properties of $\ideal_{V}$.  Note that in the proof and thereafter,  for brevity we sometimes write $fg$ to denote $f \circ g$ for morphisms $f$ and $g$ in $\cat$.

\begin{lemma}\label{L:Ideal} Let $U,V\in \ob$. Then the following statements are true.
\begin{enumerate}
\item \label{LI:UV} If $U\in \ideal_V$ then $\ideal_U\subset \ideal_V$.
\item \label{LI:VV*} $\ideal_V=\ideal_{V^*}$.
\end{enumerate}
\end{lemma}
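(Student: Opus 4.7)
The plan is to prove (1) by an explicit construction of a new retract and then leverage (1) together with the duality morphisms to obtain (2).

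For (1), I would unpack the definitions. Suppose $U \in \ideal_V$ via witnesses $W$, $\alpha: U \to V \otimes W$, $\beta: V \otimes W \to U$ with $\beta\alpha = \Id_U$, and let $X \in \ideal_U$ be witnessed by $Y$, $\alpha': X \to U \otimes Y$, $\beta': U \otimes Y \to X$ with $\beta'\alpha' = \Id_X$. I would take the witnesses for $X \in \ideal_V$ to be the object $W \otimes Y$ together with the composites $(\alpha \otimes \Id_Y) \circ \alpha': X \to V \otimes W \otimes Y$ and $\beta' \circ (\beta \otimes \Id_Y): V \otimes W \otimes Y \to X$. Bifunctoriality of $\otimes$ then gives
\[
\beta' \circ (\beta \otimes \Id_Y) \circ (\alpha \otimes \Id_Y) \circ \alpha' = \beta' \circ ((\beta \alpha) \otimes \Id_Y) \circ \alpha' = \beta' \alpha' = \Id_X,
\]
so $X \in \ideal_V$. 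This step is entirely formal.

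For (2), the key observation is that each of $V$ and $V^*$ sits as a retract of an object of the form $V^* \otimes (-)$ or $V \otimes (-)$, respectively; part (1) then closes the argument. To show $V \in \ideal_{V^*}$, I would start from the first snake identity in \eqref{E:defduality}, which exhibits $V$ as a retract of $V \otimes V^* \otimes V$ via $b_V \otimes \Id_V$ and $\Id_V \otimes d_V$. Pre-composing the injection with the braiding isomorphism $c_{V,V^*} \otimes \Id_V : V \otimes V^* \otimes V \xrightarrow{\cong} V^* \otimes V \otimes V$ (and post-composing the projection with its inverse) displays $V$ as a retract of $V^* \otimes (V \otimes V)$. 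Hence $V \in \ideal_{V^*}$, and (1) yields $\ideal_V \subset \ideal_{V^*}$.

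For the reverse inclusion I would run the mirror argument: the second identity in \eqref{E:defduality} exhibits $V^*$ as a retract of $V^* \otimes V \otimes V^*$ via $\Id_{V^*} \otimes b_V$ and $d_V \otimes \Id_{V^*}$. Composing with $c_{V^*, V} \otimes \Id_{V^*}: V^* \otimes V \otimes V^* \xrightarrow{\cong} V \otimes V^* \otimes V^*$ gives $V^*$ as a retract of $V \otimes (V^* \otimes V^*)$, so $V^* \in \ideal_V$, and another application of (1) yields $\ideal_{V^*} \subset \ideal_V$. The only (minor) subtlety throughout is bookkeeping with associators and unit isomorphisms, which I would suppress by invoking Mac Lane's coherence; beyond that, nothing is used except the snake identities of \eqref{E:defduality}, the existence and invertibility of the braiding, and the bifunctoriality already recorded in Section~\ref{SS:ribboncats}.
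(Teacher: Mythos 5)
Your proof is correct, and part (1) is exactly the paper's argument. Part (2) follows the same strategy as the paper—exhibit $V$ as a retract of $V^* \otimes V \otimes V$ and then invoke part (1)—but your execution is slightly cleaner: the paper's splitting uses the auxiliary duality morphisms $b'_V$ and $d'_V$ (which involve the twist $\theta_V$) and then relies on the zigzag identity $(d'_V \otimes \Id_V)(\Id_V \otimes b'_V) = \Id_V$, whereas you start from the raw snake identities of \eqref{E:defduality} and conjugate by $c_{V,V^*} \otimes \Id_V$, so the two braiding factors cancel and the twist never enters. Both give $\beta \circ \alpha = \Id_V$ by the same cancellation mechanism, so the arguments are equivalent; yours simply uses a smaller fragment of the ribbon structure (duality and braiding only). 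One small wording slip: where you say ``pre-composing the injection'' and ``post-composing the projection'' you of course mean the opposite (you post-compose $\alpha$ with $c \otimes \Id$ and pre-compose $\beta$ with $c^{-1} \otimes \Id$), but the formulas you intend are unambiguous and correct.
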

\begin{proof}
Since $U\in \ideal_V$ there exists $W$, $\alpha:U\rightarrow V\otimes W$ and $\beta:V\otimes W \rightarrow U$ with $\beta\circ \alpha=\Id_U$.  Similarly, if $U'\in \ideal_U$ then  there exists $W'$, $\alpha':U'\rightarrow U\otimes W'$ and $\beta':U\otimes W' \rightarrow U'$ with $\beta'\circ \alpha'=\Id_{U'}$.  Now let $\alpha'':U'\rightarrow V\otimes W \otimes W'$ and $\beta'':V\otimes W \otimes W' \rightarrow U'$ be the morphisms given by $\alpha''=(\alpha\otimes \Id_{W'})\alpha'$ and $\beta''=\beta'(\beta\otimes \Id_{W'})$.  Then $\beta''\circ \alpha''=\Id_{U'}$ and so $U'\in\ideal_V$.  Thus, $\ideal_U\subset \ideal_V$ and we have proved item \eqref{LI:UV} of the lemma.

To prove item \eqref{LI:VV*} let $\alpha: V\rightarrow V^*\otimes V\otimes V$ and $\beta:  V^*\otimes V\otimes V \rightarrow V$ be the morphisms given by $\alpha=(c_{V,V^*}\otimes \Id_V)(\Id_V\otimes b_V')$ and $\beta=(d_V'\otimes \Id_V)(c_{V,V^*}^{-1}\otimes \Id_V)$ then $\beta \circ \alpha=\Id_V$.  So $V\in \ideal_{V^*}$ and item \eqref{LI:UV} of the lemma implies that $\ideal_V\subset \ideal_{V^*}$.  Similarly $\ideal_{V^*}\subset \ideal_V$.
\end{proof}

\subsection{Traces}\label{SS:traces}  We can now give the fundamental definitions of the paper.

First, for any objects $V,W$ of $\cat$ and any endomorphism $f$ of $V\otimes
W$, set
\begin{equation}\label{E:trL}
\tr_{L}(f)=(d_{V}\otimes \Id_{W})\circ(\Id_{V^{*}}\otimes
f)\circ(b'_{V}\otimes \Id_{W}) \in \End_{\cat}(W),
\end{equation} and
\begin{equation}\label{E:trR}
\tr_{R}(f)=(\Id_{V}\otimes d'_{W}) \circ (f \otimes \Id_{W^{*}})
\circ(\Id_{V}\otimes b_{W}) \in \End_{\cat}(V).
\end{equation}

\begin{definition}\label{D:trace}  If $\ideal$ is an ideal in $\cat$ then a \emph{trace on $\ideal$} is a family of linear functions
$$\{\mt_V:\End_\cat(V)\rightarrow K\}$$
where $V$ runs over all objects of $\ideal$ and such that following two conditions hold.
\begin{enumerate}
\item  If $U\in \ideal$ and $W\in \ob$ then for any $f\in \End_\cat(U\otimes W)$ we have
\begin{equation}\label{E:VW}
\mt_{U\otimes W}\left(f \right)=\mt_U \left( \tr_R(f)\right).
\end{equation}
\item  If $U,V\in \ideal$ then for any morphisms $f:V\rightarrow U $ and $g:U\rightarrow V$  in $\cat$ we have 
\begin{equation}\label{E:fggf}
\mt_V(g\circ f)=\mt_U(f \circ g).
\end{equation} 
\end{enumerate}
\end{definition}  We remark that it
follows from \eqref{E:fggf} that a trace necessarily vanishes on commutators.

\begin{definition}\label{D:ambitrace} For $V\in \ob$ we say a linear function
  $\mt:\End_\cat(V)\rightarrow K$ is an \emph{ambidextrous trace on $V$} if for
  all $f\in \End_\cat(V\otimes V)$ we have
$$\mt(\tr_L(f))=\mt(\tr_R(f)).$$
\end{definition}  

 Recall that $V$ in $\cat$ is said to be absolutely irreducible if
 $\End_{\cat}(V)=K$, absolutely indecomposable if  $\End_{\cat}(V)/\Rad
 (\End_{\cat}(V)) \cong K$, and in either case we write $\langle \; \rangle:\End_{\cat}(V) \to K$ for the canonical linear map.  
\begin{definition}\label{D:ambidef}
We say an object $J$ of $\cat$ is \emph{ambidextrous} if $J$ is an
absolutely indecomposable object whose canonical linear map defines a non-zero ambidextrous trace on $J$.
\end{definition}

For short we say $J$ is \emph{ambi} if $J$ is ambidextrous. 
\subsection{Fundamental Properties of a Trace on an Ideal}\label{SS:fundamentals}
Our first results show that the notion of a trace on an ideal and an ambidextrous trace on an object are intimately related concepts.  The proofs are most easily expressed using the graphical calculus on $\cat$ although the more algebraically minded reader can easily translate the proofs into ones which directly use the axioms of a ribbon Ab-category.

\begin{theorem}\label{T:inducedtrace}
If $\ideal$ is an ideal of $\cat$ and $\{\mt_V\}_{V\in \ideal}$ is a trace on $\ideal$, then $\mt_V$ is an ambidextrous trace for all $V\in \ideal$.
\end{theorem}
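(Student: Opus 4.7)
The plan is to translate the ambidextrous identity into the language of $\mt_{V\otimes V}$ via \eqref{E:VW}, and then to bridge the right and left partial traces by conjugating $f$ with the braiding and invoking cyclicity \eqref{E:fggf}. Fix $V\in\ideal$ and $f\in\End_\cat(V\otimes V)$; I want to show $\mt_V(\tr_L(f))=\mt_V(\tr_R(f))$. Since $V\in\ideal$ and $V\in\ob$, the object $V\otimes V$ lies in $\ideal$ by Definition~\ref{D:ideal}, so both axioms of a trace on an ideal are available at $V\otimes V$.

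First I would apply \eqref{E:VW} with $U=W=V$ to obtain $\mt_V(\tr_R(f))=\mt_{V\otimes V}(f)$. Next I would set $\widetilde f=c_{V,V}^{-1}\circ f\circ c_{V,V}\in\End_\cat(V\otimes V)$ and apply \eqref{E:fggf} to the pair $\alpha=c_{V,V}\colon V\otimes V\to V\otimes V$ and $\beta=c_{V,V}^{-1}\circ f\colon V\otimes V\to V\otimes V$, which satisfy $\beta\circ\alpha=\widetilde f$ and $\alpha\circ\beta=f$; this yields $\mt_{V\otimes V}(\widetilde f)=\mt_{V\otimes V}(f)$. A second application of \eqref{E:VW}, this time to $\widetilde f$, delivers $\mt_{V\otimes V}(\widetilde f)=\mt_V(\tr_R(\widetilde f))$. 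Granting for the moment the purely categorical identity
\[
\tr_R\bigl(c_{V,V}^{-1}\circ f\circ c_{V,V}\bigr)=\tr_L(f)\quad\text{in }\End_\cat(V),
\]
the theorem follows by chaining:
\[
\mt_V(\tr_L(f))=\mt_V(\tr_R(\widetilde f))=\mt_{V\otimes V}(\widetilde f)=\mt_{V\otimes V}(f)=\mt_V(\tr_R(f)).
\]

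The main obstacle is the displayed categorical identity, which involves no trace and lives entirely inside the ribbon category $\cat$. I would prove it graphically. The diagram for $\tr_R(c_{V,V}^{-1}\circ f\circ c_{V,V})$ places the box for $f$ between a crossing and its inverse, and then closes the rightmost $V$-strand into a ribbon loop built from $b_V$ and $d'_V$. The plan is to sweep this closing loop across the diagram from the right to the left: the two crossings are absorbed by the resulting isotopy, which is legal in the framed ribbon-graph sense thanks to naturality of the braiding and of the twist, while the closure $b_V/d'_V$ on the right is converted into $b'_V/d_V$ on the left. The twist factors implicit in $b'_V=(\Id_{V^*}\otimes\theta_V)\circ c_{V,V^*}\circ b_V$ and $d'_V=d_V\circ c_{V,V^*}\circ(\theta_V\otimes\Id_{V^*})$ cancel correctly during the deformation, and the final picture is exactly the diagram of $\tr_L(f)$. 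This is the only step that departs from the formal manipulation of the trace axioms, and it is a standard exercise in the graphical calculus of Section~\ref{SS:diagrams}.
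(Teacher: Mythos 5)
Your argument is correct and follows essentially the same route as the paper: both reduce to the categorical identity $\tr_R(c_{V,V}^{-1}\circ f\circ c_{V,V})=\tr_L(f)$ by applying \eqref{E:VW} to pass from $\mt_V\circ\tr_R$ to $\mt_{V\otimes V}$ and then \eqref{E:fggf} to conjugate $f$ by the braiding, and both establish that identity via graphical isotopy (the paper writes it out explicitly in \eqref{E:lrtrace}, using invertibility of the twist, invertibility of the braiding, and naturality). The only difference is presentational: you name $\alpha=c_{V,V}$ and $\beta=c_{V,V}^{-1}\circ f$ explicitly when invoking cyclicity, whereas the paper writes the same step as $\mt_{V\otimes V}(c_{V,V}c_{V,V}^{-1}f)=\mt_{V\otimes V}(c_{V,V}^{-1}fc_{V,V})$.
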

\begin{proof}
Let $V$ be an object of $\cat$ and let $f\in\End_\cat(V\otimes V)$.  From Equation \eqref{E:VW} we have $\mt_{V\otimes V}(f)=\mt_V(\tr_R(f))$.  On the other hand, Equations \eqref{E:VW} and \eqref{E:fggf} imply that
\begin{align*}
\mt_{V\otimes V}(f)&=\mt_{V\otimes V}(c_{V,V}c_{V,V}^{-1}f)=\mt_{V\otimes V}(c_{V,V}^{-1}fc_{V,V})=\mt_{V}(\tr_R(c_{V,V}^{-1}fc_{V,V})).
\end{align*}
Therefore, it suffices to show that $\tr_R(c_{V,V}^{-1}fc_{V,V})=\tr_L(f)$.  To do this we use the graphical calculus discussed above.  In particular, we have the following:
\begin{equation}
\label{E:lrtrace}
\put(6,-1){\large{ $f$}}   \put(42,-1){ \large ${ \dot{=}}$} 
\put(69,-1){\large{ $f$}}   \put(104,-1){ \large ${ \dot{=}}$} 
\put(132,-1){\large{ $f$}}   \put(171,-1){ \large ${ \dot{=}}$} 
\put(200,-1){\large{ $f$}}   \put(235,-1){ \large ${ \dot{=}}$} 
\put(276,1){\large{ $f$}}   \put(304,-1){ \large ${ \dot{=}}$} 
\put(344,1){\large{ $f$}}  
\epsh{lrtrace}{32ex}
\end{equation}
where all arrows are colored with $V$.  The first equality of Equation \eqref{E:lrtrace} follows from the fact that $\theta_V$ is invertible (see Figure \ref{F:twistinv2}), the second and fifth from Figure \ref{F:invbraiding} and the third and fourth from the naturality of the braiding.  
\end{proof}

\begin{theorem}\label{T:ambi!Trace}
  Let $J$ be an object of $\cat$ and $\mt$ be an ambidextrous trace on $J$.
  Then there is an unique trace $\{\mt_V\}_{V \in \ideal_{J}}$ on $\ideal_J$
  with $\mt=\mt_J$.
\end{theorem}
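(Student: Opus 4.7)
The plan is to define, for each $V \in \obideal$, a linear map $\mt_V: \End_\cat(V) \to K$ as follows. Since $V \in \ideal_J$, I fix an object $W$ and morphisms $\alpha: V \to J\otimes W$, $\beta: J\otimes W \to V$ with $\beta \circ \alpha = \Id_V$, and set
\[
\mt_V(f) \; := \; \mt\bigl(\tr_R(\alpha \circ f \circ \beta)\bigr) \qquad\text{for } f \in \End_\cat(V),
\]
where $\tr_R$ is the partial trace over $W$ of \eqref{E:trR}. The whole construction rests on a single cyclicity lemma: \emph{for any objects $W, W'$ of $\cat$ and morphisms $u: J\otimes W \to J \otimes W'$, $v: J\otimes W' \to J\otimes W$, one has $\mt(\tr_R(v \circ u)) = \mt(\tr_R(u \circ v))$.}

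I expect this Key Lemma to be the main obstacle; it is precisely the point at which the ambidextrous hypothesis on $\mt$ must be used in an essential way. The plan for its proof is diagrammatic, in the spirit of the proof of Theorem~\ref{T:inducedtrace}: both $\tr_R(v \circ u)$ and $\tr_R(u \circ v)$ are represented by ribbon graphs consisting of a single $J$-strand with a closed loop passing through both $u$ and $v$, and the two graphs differ by a rotation of that loop past one of the boxes. Using naturality and invertibility of the braiding together with the duality relations (Figure~\ref{F:defduality}), this rotation can be carried out at the cost of introducing braiding crossings on a local $J \otimes J$ block, after which the ambidextrous identity $\mt(\tr_L(h)) = \mt(\tr_R(h))$ for $h \in \End_\cat(J\otimes J)$ is invoked to eliminate the crossings and recover the graph for the other side.

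Granted the Key Lemma, the remaining work is formal. For \emph{well-definedness}, given two choices $(\alpha_i, \beta_i, W_i)$ I set $g_i := \alpha_i \circ f \circ \beta_i \in \End_\cat(J \otimes W_i)$, $P := \alpha_2 \circ \beta_1$, $Q := \alpha_1 \circ \beta_2$; the relations $\beta_i \circ \alpha_i = \Id_V$ immediately give $g_1 = Q \circ g_2 \circ P$ and $g_2 \circ P \circ Q = g_2$, so the Key Lemma yields $\mt(\tr_R(g_1)) = \mt(\tr_R(g_2 \circ P \circ Q)) = \mt(\tr_R(g_2))$. \emph{Property~\eqref{E:VW}} does not need the Key Lemma: since $U \otimes W$ retracts off $J \otimes W_U \otimes W$ via $\alpha_U \otimes \Id_W$ and $\beta_U \otimes \Id_W$, the identity reduces to iteration of the partial trace together with the naturality relation $\tr_R\bigl((\alpha_U\otimes \Id_W) \circ f \circ (\beta_U \otimes \Id_W)\bigr) = \alpha_U \circ \tr_R(f) \circ \beta_U$. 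For \emph{property~\eqref{E:fggf}}, given $f: V \to U$ and $g: U \to V$, set $A := \alpha_V \circ g \circ \beta_U$ and $B := \alpha_U \circ f \circ \beta_V$, so that $A \circ B = \alpha_V \circ (g\circ f) \circ \beta_V$ and $B \circ A = \alpha_U \circ (f\circ g) \circ \beta_U$; the Key Lemma then gives $\mt_V(g \circ f) = \mt(\tr_R(A \circ B)) = \mt(\tr_R(B \circ A)) = \mt_U(f \circ g)$.

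Uniqueness is immediate: any trace $\{\mt'_V\}$ on $\ideal_J$ extending $\mt$ must, by \eqref{E:VW} applied at $U = J$, satisfy $\mt'_{J\otimes W} = \mt \circ \tr_R$; and then \eqref{E:fggf} applied to $\alpha: V \to J\otimes W$ and $f \circ \beta: J\otimes W \to V$ (whose two compositions are $f \circ \beta \circ \alpha = f$ and $\alpha \circ f \circ \beta$) forces $\mt'_V(f) = \mt'_{J\otimes W}(\alpha \circ f \circ \beta) = \mt(\tr_R(\alpha \circ f \circ \beta))$, matching our definition of $\mt_V(f)$.
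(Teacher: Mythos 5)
Your proof is correct and follows the same underlying strategy as the paper, but it has one worthwhile organizational difference: you isolate a single Key Lemma---cyclicity of $\mt\circ\tr_R$ for morphisms between objects of the form $J\otimes W$---and derive \emph{both} well-definedness and property~\eqref{E:fggf} from it by purely formal manipulations. The paper instead carries out two separate diagrammatic constructions: an endomorphism $\psi\in\End_\cat(J\otimes J)$ with $\tr_L(\psi)=\tr_R(\alpha f\beta)$ and $\tr_R(\psi)=\tr_R(\alpha' f\beta')$ to establish well-definedness, and a second endomorphism $\eta\in\End_\cat(J\otimes J)$ (Equation~\eqref{E:defeta} and Figure~\ref{F:eta}) for property~\eqref{E:fggf}. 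Your Key Lemma is essentially the content of the $\eta$ construction stated in its natural generality (set $u=B$, $v=A$ in your notation, or $u=g_2 P$, $v=Q$ for well-definedness), so the diagrammatic work required is the same, but your formulation makes it clear that the two uses of ambidexterity in the paper are instances of a single cyclicity principle. This is a genuine gain in conceptual clarity. Your treatments of property~\eqref{E:VW} (partial-trace iteration, no ambidexterity needed) and of uniqueness (apply~\eqref{E:VW} at $U=J$ followed by~\eqref{E:fggf} with $\alpha$ and $f\circ\beta$) coincide with the paper's.

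One caveat: you correctly identify the Key Lemma as the crux and as the only place where ambidexterity enters, but you only sketch its diagrammatic proof (rotating the loop past one box at the cost of local crossings on a $J\otimes J$ block, then applying $\mt(\tr_L)=\mt(\tr_R)$). To complete the argument you would need to exhibit the explicit element $h\in\End_\cat(J\otimes J)$ with $\tr_L(h)=\tr_R(vu)$ and $\tr_R(h)=\tr_R(uv)$; this is exactly what the paper's $\eta$ provides, using the braiding $c^{-1}$, the twist $\theta$, and the duality morphisms for $W$ and $W'$ as building blocks. With that element written down your proof is complete.
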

\begin{proof}
For each $U\in \ideal_J$ choose an object $V$ in $\cat$ and morphisms $\alpha:
U \rightarrow J\otimes V$ and $\beta: J\otimes V \rightarrow U$ such that
$\beta \circ \alpha=\Id_U$.    For $f\in \End_\cat(U)$ define
$\mt_U(f)=\mt(\tr_R(\alpha \circ f \circ \beta))$
which graphically is: 
$$\mt_U(f)=\mt\left(\pic{0.7ex}{
    \begin{picture}(10,24)(0,-2)
      \qbezier(5, 2)(5, 0)(7, 0)
      \qbezier(7, 0)(9, 0)(9, 4)
      \put(2,2){\vector(0,-1){3}}
      \put(2,17){\line(0,1){4}}
      \put(9,4){\vector(0,1){12}}
      \multiput(0,2)(7,0){2}{\line(0,1){4}}
      \multiput(0,2)(0,4){2}{\line(1,0){7}}
      \multiput(0,7)(7,0){2}{\line(0,1){5}}
      \multiput(0,7)(0,5){2}{\line(1,0){7}}
      \multiput(0,13)(7,0){2}{\line(0,1){4}}
      \multiput(0,13)(0,4){2}{\line(1,0){7}}
      \put(3.5,6){\line(0,1){1}}
      \put(3.5,12){\line(0,1){1}}
      \qbezier(5, 17)(5, 20)(7, 20)
      \qbezier(7, 20)(9, 20)(9, 16)
      \put(2.5,3.1){$\beta$}
      \put(2.5,8.7){$f$}
      \put(2.5,14){$\alpha$}
    \end{picture}}\right)$$

We need to show that $\mt_U$ is independent of $\alpha$, $\beta$ and $V$.   For $U\in \ideal$ suppose that  $V'\in \ob$ and $\alpha': U \rightarrow J\otimes V'$, $\beta': J\otimes V' \rightarrow U$ are morphisms such that $\beta' \circ \alpha'=\Id_U$.   Consider the morphism $\psi:J\otimes J\rightarrow J\otimes J$ given by
$$\psi=\left(\theta_J\otimes \Id_J\otimes d'_{V}\right)\left([(\Id_J\otimes (\alpha f\beta'))(c^{-1}_{J,J}\otimes\Id_{V'})(\Id_J\otimes(\alpha'\beta))]\otimes \Id_{V^*}\right)\left(\Id_J\otimes \Id_J\otimes b_V\right)$$
where $\theta_{J}:J\rightarrow J$ is the twist of $J$.   A graphical representation of $\psi$ is given in Figure~\ref{F:psi}.   
\begin{figure}
  \centering
   \subfloat[Morphism $\psi$]{\label{F:psi}
   \hspace{10pt} 
  $  \put(38,49){{\large $\alpha$}}  \put(38,30){{\large $f$}} \put(37,10){{\large $\beta'$}} 
   \put(42,-7){ {\footnotesize $V'$}} \put(37,-32){{\large $\alpha'$}}
   \put(38,-51){{\large $\beta$}}  \put(43,-70){ {\small  $V$}} 
   \put(24,-69){ {\small  $J$}} \put(2,-69){ {\small  $J$}}   
   \put(24,69){ {\small  $J$}} \put(2,69){ {\small  $J$}}  
    \epsh{figtrace1C}{30ex} $
    \hspace{10pt} }
 \subfloat[Morphism $\eta$]{\label{F:eta}
\hspace{10pt} 
$ 
  \put(12,43){{\large $\alpha_2$}} \put(48,43){{\large $\alpha_1$}}
   \put(13,-14){{\large $f$}} \put(49,-14){{\large $g$}}
   \put(12,-35){{\large $\beta_2$}} \put(48,-35){{\large $\beta_1$}}
    \put(0,88){ {\small  $J$}} \put(0,-88){ {\small  $J$}} 
    \put(23,88){ {\small  $J$}} \put(22,-88){ {\small  $J$}} 
  \put(42,-84){ {\small  $V_2$}} \put(53,-58){ {\small  $V_1$}} 
    \put(28,28){ {\scriptsize  $U_2$}} \put(54,20){ {\scriptsize   $U_1$}} 
\epsh{traceind}{37ex}
$
\hspace{10pt} 
}
   \caption{}
\end{figure}
The naturality of the braiding and the fact that $\beta'((\theta_J\theta_J^{-1})\otimes \Id_{V'})\alpha'=\Id_U$ we have $\tr_L(\psi)=\tr_R(\alpha\circ f\circ \beta)$.  On the other hand, the naturality of the braiding and Figure \ref{F:partracefggf} implies that  $\tr_R(\psi)=\tr_R(\alpha'\circ f\circ \beta')$.  But $\mt(\tr_L(\psi))=\mt(\tr_R(\psi))$ since $\mt$ is an ambidextrous trace on $J$.   Thus, the definition of $\mt_U$ is independent of the choices made above.

Let us prove that the family $\{\mt_U\}$ satisfies Equation \eqref{E:fggf}.  
Let $U_1,U_2\in \ideal$ and $f:U_2\rightarrow U_1 $, $g:U_1\rightarrow U_2$ be morphisms of $\cat$.   Let $\alpha_i: U_i \rightarrow J\otimes V_i$ and $\beta_i: J\otimes V_i \rightarrow U_i$ be morphisms such that $\beta_i\circ \alpha_i=\Id_{U_i}$ for $i=1,2$.    

Define the morphism  $\eta:J\otimes J \rightarrow J\otimes J$  by
\begin{align}\label{E:defeta}
\eta= (\Id_J\otimes & \Id_J \otimes d'_{V_2}) (\Id_J\otimes c^{-1}_{J,V_2}\otimes d'_{V_1}\otimes \Id_{V_2^*})\circ \notag \\
&\circ ([(\alpha_2 \otimes \alpha_1)    (\theta_{U_2}\otimes \Id_{U_1})c^{-1}_{U_2,U_1}(f\otimes g)  (\beta_2 \otimes \beta_1) ]  \otimes \Id_{V_1^*}\otimes \Id_{V_2^*})\circ \notag \\
 &\circ (\Id_J\otimes c_{J,V_2}\otimes b_{V_1} \otimes\Id_{V_2^*})(\Id_J\otimes \Id_J \otimes b_{V_2}).
\end{align}
 A graphical representation of $\eta$ is given in Figure \ref{F:eta}.   As above using the diagrammatic calculus, one can see that  $\tr_L(\eta)=\tr_R(\alpha_1\circ fg\circ \beta_1)$ and $\tr_R(\eta)=\tr_R(\alpha_2\circ gf\circ \beta_2)$.
Since $\mt$ is an ambidextrous trace on $J$ we have $\mt(\tr_L(\eta))=\mt(\tr_R(\eta))$.  Thus, the definition of the family $\{\mt_U\}$ implies
$$ \mt_{U_1}(fg)=\mt(\tr_R(\alpha_1\circ fg\circ \beta_1))=\mt(\tr_R(\alpha_2\circ gf\circ \beta_2))=\mt_{U_2}(gf).$$    

Next, we show the family $\{\mt_U\}$ satisfies Equation \eqref{E:VW}.   
Let $U\in \ideal_J$ and choose $V$, $\alpha$ and $\beta$ as above.  Let $W\in
\ob$ and $f\in \End_\cat(U\otimes W)$.  Set $V'=V\otimes W$.  Let
$\alpha':U\otimes W\rightarrow J\otimes V'$ and $\beta':J\otimes
V'\rightarrow U\otimes W$ be the morphisms given by $\alpha \otimes \Id_W$
and $\beta \otimes \Id_W$, respectively.  These morphisms satisfy
$\beta'\circ \alpha'=\Id_{U\otimes W}$.  Then
$$\tr_R(\alpha'\circ f\circ \beta')=\tr_R((\alpha \otimes d'_{W})(f\otimes \Id_{{W}^*})(\beta \otimes b_{W}))=\tr_R(\alpha\circ \tr_R(f) \circ \beta).$$
Thus, by applying $\mt$ to the last equation we see $\mt_{U\otimes W}(f)=\mt_U(\tr_R(f))$.

Finally, we show this trace is unique.   First, it is clear that $\mt_J=\mt$.  Suppose that $\{\mt'_U\}$ is a potentially different trace on $\ideal_J$ with $\mt'_J=\mt$.    For $U\in \ideal_J$ choose $V$, $\alpha$ and $\beta$ as above.  Let $f\in \End_\cat(U)$.  Then
\begin{align*}
\mt'_U(f)=\mt'_U(f\circ \beta \circ \alpha)=\mt'_{J\otimes V}(\alpha \circ f \circ \beta)=\mt'_J(\tr_R(\alpha \circ f\circ \beta))=\mt(\tr_R(\alpha \circ f\circ \beta))=\mt_U(f).
\end{align*} 
This concludes the proof of the theorem.
\end{proof}

The following result follows immediately from the previous theorem.
\begin{corollary}
  Let ${V}$ be an absolutely irreducible ambi object.  Then there is a
  unique non-zero trace on $\ideal_{V}$ up to multiplication by an element of
  $K$.
\end{corollary}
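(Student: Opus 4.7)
The plan is to deduce the corollary as a direct consequence of Theorems~\ref{T:inducedtrace} and~\ref{T:ambi!Trace}, using the fact that absolute irreducibility collapses the space of linear functionals on $\End_{\cat}(V)$ to a one-dimensional $K$-module.

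First I would establish existence. Since $V$ is ambi, the canonical map $\langle\,\cdot\,\rangle: \End_{\cat}(V)\to K$ is a non-zero ambidextrous trace on $V$. Theorem~\ref{T:ambi!Trace} then produces a trace $\{\mt_U\}_{U\in \ideal_V}$ on $\ideal_V$ with $\mt_V = \langle\,\cdot\,\rangle$. Because $\mt_V(\Id_V) = 1 \ne 0$, this trace is non-zero, and for any $\lambda \in K$ the rescaled family $\{\lambda \mt_U\}$ is again a trace on $\ideal_V$.

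For uniqueness up to scalar, let $\{\mt'_U\}_{U\in \ideal_V}$ be any non-zero trace on $\ideal_V$. By Theorem~\ref{T:inducedtrace}, $\mt'_V$ is an ambidextrous trace on $V$. Since $V$ is absolutely irreducible, $\End_{\cat}(V) = K\Id_V$, so $\mt'_V$ is determined by the scalar $\lambda := \mt'_V(\Id_V) \in K$; explicitly, $\mt'_V = \lambda \langle\,\cdot\,\rangle = \lambda\, \mt_V$. The family $\{\lambda \mt_U\}$ is then a trace on $\ideal_V$ whose value on $V$ coincides with $\mt'_V$, so by the uniqueness clause of Theorem~\ref{T:ambi!Trace} we must have $\mt'_U = \lambda\, \mt_U$ for every $U \in \ideal_V$.

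The only remaining thing to observe is that $\lambda$ cannot be zero: if $\lambda = 0$, then $\mt'$ extends the zero ambidextrous trace on $V$, and uniqueness in Theorem~\ref{T:ambi!Trace} forces $\mt'_U = 0$ for all $U \in \ideal_V$, contradicting the assumption that $\mt'$ is non-zero. There is no real obstacle in this argument; the entire content is already packaged in Theorems~\ref{T:inducedtrace} and~\ref{T:ambi!Trace}, and absolute irreducibility simply makes the space of ambidextrous traces on $V$ one-dimensional over $K$.
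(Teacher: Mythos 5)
Your proof is correct and follows the same route the paper intends: the paper simply remarks that the corollary follows immediately from Theorem~\ref{T:ambi!Trace}, and your argument is exactly the (correct) unpacking of that remark, using Theorem~\ref{T:inducedtrace} to see that any trace on $\ideal_V$ restricts to an ambidextrous trace on $V$, absolute irreducibility to reduce that restriction to a scalar multiple of $\langle\,\cdot\,\rangle$, and the uniqueness clause of Theorem~\ref{T:ambi!Trace} to pin down the full family.
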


In light of the previous theorem, the existence of a non-zero trace on $\ideal_{V}$ for an object $V$ in $\cat$ amounts to verifying that $V$ admits an ambidextrous trace.  The following lemma provides several tools for doing this.

\begin{lemma}\label{P:ambdim}  Let $J$ be an object of the ribbon Ab-category $\cat$.
\begin{enumerate}
\item  If the braiding
  $c_{J,J}$ commutes with any element of $\End_\cat(J\otimes J)$,
  then any linear map on $\End_\cat(J)$ is an ambidextrous trace on $J$. \label{LI:amb1}
\item Assume $\cat$ is as in Lemma~\ref{L:deligneprop} and $J$ is an absolutely indecomposable object of $\cat$ such that the elements of $\Rad \left(\End_{\cat}(J) \right)$ are nilpotent.  If $\dim_\cat(J)\neq0$, then any scalar multiple of the canonical trace on $\End_\cat(J)$ is an ambidextrous trace on $J$. \label{LI:amb2}
\end{enumerate}
\end{lemma}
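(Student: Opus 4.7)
The plan is to handle the two parts separately, since they are logically quite different.

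For part~\eqref{LI:amb1}, I would reuse the computation already performed inside the proof of Theorem~\ref{T:inducedtrace}. There the graphical identity of display~\eqref{E:lrtrace} establishes that
\[
\tr_R\bigl(c_{J,J}^{-1}\circ f\circ c_{J,J}\bigr)=\tr_L(f)
\]
for every $f\in\End_\cat(J\otimes J)$, using only the invertibility of the twist, the invertibility and naturality of the braiding, and $(\theta_J\otimes\Id)\circ(\Id\otimes\theta_J^{-1})=\theta_{J\otimes J}^{\pm 1}$ on crossing strands. The hypothesis that $c_{J,J}$ commutes with every element of $\End_\cat(J\otimes J)$ then gives $c_{J,J}^{-1}\circ f\circ c_{J,J}=f$, so $\tr_L(f)=\tr_R(f)$ as morphisms in $\End_\cat(J)$. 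Any linear functional $\mt\colon\End_\cat(J)\to K$ therefore trivially satisfies $\mt(\tr_L(f))=\mt(\tr_R(f))$, so every such $\mt$ is ambidextrous.

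For part~\eqref{LI:amb2} the strategy is to bootstrap from the ambidextrous character of the categorical trace $\tr_\cat$ (which is a built-in feature of any ribbon category) to the canonical trace $\langle\;\rangle$. First, I would record the standard partial-trace identity, valid in every ribbon category:
\[
\tr_\cat(\tr_L(f))=\tr_\cat(f)=\tr_\cat(\tr_R(f)),\qquad f\in\End_\cat(J\otimes J).
\]
This is a straightforward manipulation of cups, caps and braidings (or it can be read off from Figure~\ref{F:twistcal} and its siblings) and says that $\tr_\cat$ is itself an ambidextrous trace on every object. Next, because $J$ is absolutely indecomposable, the canonical map $\langle\;\rangle$ gives a $K$-algebra splitting
\[
\End_\cat(J)=K\cdot\Id_J\,\oplus\,\Rad(\End_\cat(J)),
\]
and the standing hypothesis forces the radical to consist of nilpotent elements, so Corollary~\ref{C:deligne} applies and tells us that $\tr_\cat$ vanishes identically on $\Rad(\End_\cat(J))$. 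Consequently, for every $g\in\End_\cat(J)$,
\[
\tr_\cat(g)=\langle g\rangle\,\tr_\cat(\Id_J)=\langle g\rangle\,\dim_\cat(J).
\]

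Finally, I would apply this to $g=\tr_L(f)$ and to $g=\tr_R(f)$. The partial-trace identity gives
\[
\langle \tr_L(f)\rangle\,\dim_\cat(J)=\tr_\cat(\tr_L(f))=\tr_\cat(\tr_R(f))=\langle \tr_R(f)\rangle\,\dim_\cat(J),
\]
and dividing by the nonzero scalar $\dim_\cat(J)$ yields $\langle\tr_L(f)\rangle=\langle\tr_R(f)\rangle$. This is exactly the ambidextrous condition for $\langle\;\rangle$, and the class of ambidextrous traces is manifestly stable under multiplication by a scalar from $K$. The only subtle step is the third one: the vanishing of $\tr_\cat$ on $\Rad(\End_\cat(J))$, which is precisely where the abelian-category hypothesis and the nilpotence of the radical are used via Corollary~\ref{C:deligne}; the rest is essentially bookkeeping around the partial-trace property of $\tr_\cat$ and the decomposition coming from absolute indecomposability.
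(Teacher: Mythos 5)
Your proof is correct and follows essentially the same route as the paper: part~\eqref{LI:amb1} via the identity $\tr_R(f)=\tr_L(c_{J,J}^{-1}fc_{J,J})$ from~\eqref{E:lrtrace} plus the commuting hypothesis, and part~\eqref{LI:amb2} via the relation $\dim_\cat(J)\langle h\rangle=\tr_\cat(h)$ (from Corollary~\ref{C:deligne} and absolute indecomposability) combined with the partial-trace identity $\tr_\cat(\tr_L(f))=\tr_\cat(f)=\tr_\cat(\tr_R(f))$. The only small blemish is the parenthetical claim that $(\theta_J\otimes\Id)\circ(\Id\otimes\theta_J^{-1})$ equals $\theta_{J\otimes J}^{\pm1}$, which is not the actual ribbon identity used to prove~\eqref{E:lrtrace}, but since you are merely citing that display rather than re-deriving it, this does not affect the argument.
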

\begin{proof}
Let $f \in\End_{\cat}(J \otimes J)$.  To prove \eqref{LI:amb1} it is enough to show that $\tr_R(f)=\tr_L(f)$.  From Equation \eqref{E:lrtrace} we have
  
\begin{equation}\label{E:flipconj}
\tr_{R}(f)=\tr_{L}(c_{J,J}^{-1}\circ f \circ c_{J,J}).
\end{equation}

But $c_{J,J}$ commutes with $\End_{\cat}(J\otimes J)$ and so
$c_{J,J}^{-1}\circ f \circ c_{J,J}=f$.  

To prove \eqref{LI:amb2} we first observe that if $\langle \, \rangle$ denotes the canonical trace and $\tr_{\cat}$ the categorical trace, then both vanish on $\Rad \left(\End_{\cat}(V) \right)$; by definition for the canonical trace and by Corollary~\ref{C:deligne} for the categorical trace.  From this we obtain 
\[
\dim_{\cat}(J)\langle h \rangle = \tr_{\cat}(h)
\] for any $h \in \End_{\cat}(J)$.  Furthermore, for any $f \in  \End_{\cat}(J \otimes J)$ we have 
\[
\tr_{\cat}(\tr_{L}(f))=\tr_{\cat}(f) = \tr_{\cat}(\tr_{R}(f)).
\]  Combining these observations we have
\[
\dim_{\cat}(J)\langle \tr_{L}(f) \rangle =  \tr_{\cat}(\tr_{L}(f)) = \tr_{\cat}(f) = \tr_{\cat}(\tr_{R}(f)) = \dim_{\cat}(J)\langle \tr_{L}(f) \rangle.
\]  Therefore the canonical trace on $J$ is ambidextrous.

\end{proof}

\begin{remark}\label{R:ambi}
Let us present several several situations in which the above lemma proves useful.  
\begin{enumerate}
\item If the tensor product $J\otimes J$ is semisimple and multiplicity free then $\End_\cat(J\otimes J)$ is commutative and the lemma implies that any linear map on $\End_\cat(J)$ is an ambidextrous trace on $J$.
\item If $c_{J,J}^{2}=1$ and the characteristic of $K$ is not two, then the conjugation action of $c$ on $\End_{\cat}(J\otimes J)$ is semisimple.  The endomorphism algebra decomposes into $\pm 1$ eigenspaces under this action.  If the $-1$ eigenspace is zero, then $c$ is central and again any linear map on $\End_{\cat}(J)$ provides an ambidextrous trace. 

Conversely, if $f \in \End_{\cat}(J \otimes J)$ is in the $-1$ eigenspace, then by \eqref{E:flipconj} one has that 
\[
\tr_{R}(f) = - \tr_{L}(f).
\]  Therefore, if $\mt: \End_{\cat}(J) \to \End_{\cat}(\unit )$ is a linear map such that $\mt\left( \tr_{L}(f)\right) \neq 0$, then $\mt$ does not define an ambidextrous trace on $J$. 
\end{enumerate} 
\end{remark}

We pause to consider the following simple example.

\begin{example}\label{Ex:unit}  Let $J=\unit$ be the unit object in $\cat$.  Since $\End_{\cat}(\unit )$ is a commutative ring, Lemma~\ref{P:ambdim}\eqref{LI:amb1} implies that the identity map $\langle \; \rangle : \End_{\cat}(\unit ) \to K$ defines an ambidextrous trace on $\unit$.   By Theorem~\ref{T:ambi!Trace} we obtain a trace on $\ideal_{\unit} = \cat$ induced by $\langle \; \rangle$.  We then have 
\[
t_{V} = \tr_{\cat}
\] for all $V$ in $\cat$.  In this way we recover the categorical trace on $\cat$.  Similarly we recover the categorical dimension since 
\[
\md_{\unit} = \dim_{\cat},
\] where $\md_{\unit}$ is the modified dimension defined in the next section.

\end{example}

\section{Modified dimensions}\label{S:ModDim}
\subsection{The Modified Dimension Function}\label{SS:moddim}  
We now use the trace on an ideal $\ideal$ introduced above to define a modified
dimension function on objects in the ideal $\ideal$.  Namely, let $\ideal$ be an ideal in an Ab-ribbon category $\cat$ and let $\mt =\left\{\mt_{V} \right\}_{V\in \ideal }$ be a trace on $\ideal$.  We define the \emph{modified dimension function} 
\[
\md_{\mt}: \operatorname{Ob}(\ideal ) \to  K
\] by the formula 
\[
\md_{\mt} (V) = \mt_{V}\left(\Id_{V} \right).
\]

We will primarily be interested in the ideal $\ideal_{J}$ where $J$ is an absolutely
indecomposable object in $\cat$.
Let $J$ be absolutely indecomposable and recall that we write $\langle \; \rangle: \End_{\cat}(J) \to K$ for the canonical projection.  Recall that we assume the elements
of $\Rad \left(\End_{\cat}(J) \right)$ are nilpotent.  We remark that then for any $f \in \End_{\cat}(J)$ we have
\[
f = \langle f \rangle \Id_{J} + r,
\] 
where $r \in \Rad \left(\End_{\cat}(J) \right)$ and $f$ 
is invertible if and only if $\langle f \rangle $ is non-zero.

\begin{definition}\label{D:moddimdef}  
Fix an ambi object $J$ with canonical linear map $\langle\; \rangle$ and let $\{\mt_V\}_{V\in \ideal_J}$ be the trace on $\ideal_J$ coming from Theorem \ref{T:ambi!Trace} applied to $\langle \; \rangle$.   Define the \emph{modified dimension $\md_{J}$} to be the function from objects of $\ideal_J$ to $K$ given by 
\begin{equation}\label{E:moddimdef}
\md_J(V)=\mt_{V}(\Id_{V})=\langle \tr_R(\alpha \circ \beta) \rangle
\end{equation}
where $\alpha:V\rightarrow J\otimes W$ and $\beta: J\otimes W \rightarrow V$ are morphisms such $\beta \circ \alpha=\Id_V$ for some $W\in \ob$.
\end{definition}

\subsection{The Modified Dimension and Ideals}\label{SS:moddimandideals}  Throughout this section we assume $J$ is an ambidextrous object in $\cat$.   The following results show a close relationship between the modified dimension function and ideals.

\begin{theorem}\label{T:dim0}
Let $V$ be any object in $\ideal_J$.  If $\md_J(V)\neq 0$ then $\ideal_V=\ideal_J$.
\end{theorem}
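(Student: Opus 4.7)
The plan is to show that $J$ itself lies in $\ideal_V$; by Lemma~\ref{L:Ideal}\eqref{LI:UV} this will give $\ideal_J\subset\ideal_V$, and since $V\in\ideal_J$ the same lemma gives the reverse inclusion, finishing the argument. So the whole content is to exhibit $J$ as a retract of $V\otimes X$ for some object $X$ of $\cat$.

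Because $V\in\ideal_J$, fix $W$ in $\cat$ together with $\alpha\colon V\to J\otimes W$ and $\beta\colon J\otimes W\to V$ with $\beta\circ\alpha=\Id_V$. By the defining formula \eqref{E:moddimdef} for the modified dimension,
\[
\md_J(V)=\langle\tr_R(\alpha\circ\beta)\rangle,
\]
and the hypothesis $\md_J(V)\neq 0$ says that the scalar part of $\phi:=\tr_R(\alpha\circ\beta)\in\End_\cat(J)$ is a unit of $K$. Since $J$ is absolutely indecomposable and elements of $\Rad(\End_\cat(J))$ are nilpotent (by our standing assumption from Section~\ref{SS:groundring}), $\phi$ differs from $\langle\phi\rangle\Id_J$ by a nilpotent element, and hence $\phi$ is invertible in $\End_\cat(J)$.

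Take $X=W^*$, and define
\[
\gamma=(\beta\otimes\Id_{W^*})\circ(\Id_J\otimes b_W)\colon J\longrightarrow V\otimes W^*,
\qquad
\delta=(\Id_J\otimes d'_W)\circ(\alpha\otimes\Id_{W^*})\colon V\otimes W^*\longrightarrow J.
\]
Interchanging the middle factors of the composition and using the definition \eqref{E:trR} of $\tr_R$, a direct computation (which is most transparent in the graphical calculus) yields
\[
\delta\circ\gamma
=(\Id_J\otimes d'_W)\circ\bigl((\alpha\circ\beta)\otimes\Id_{W^*}\bigr)\circ(\Id_J\otimes b_W)
=\tr_R(\alpha\circ\beta)=\phi.
\]
Therefore $(\phi^{-1}\circ\delta)\circ\gamma=\Id_J$, so $J$ is a retract of $V\otimes W^*$ and $J\in\ideal_V$, as required.

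The only real step is the invertibility of $\phi$ together with the identification $\delta\circ\gamma=\tr_R(\alpha\circ\beta)$; the rest is a bookkeeping application of Lemma~\ref{L:Ideal}. The main obstacle, to the extent there is one, is recognising that the naturally available morphisms $\gamma$ and $\delta$ produce the partial trace $\tr_R(\alpha\circ\beta)$ rather than $\Id_J$ on the nose, so that non-vanishing of $\md_J(V)$ (rather than some stronger positivity) is exactly what is needed to invert the error and promote the near-retract to a genuine one.
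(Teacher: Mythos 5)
Your proof is correct and follows essentially the same route as the paper's: both exhibit $J$ as a retract of $V\otimes W^*$ via the morphisms $(\beta\otimes\Id_{W^*})\circ(\Id_J\otimes b_W)$ and $(\Id_J\otimes d'_W)\circ(\alpha\otimes\Id_{W^*})$, whose composite is $\tr_R(\alpha\circ\beta)$, and then use $\md_J(V)\neq 0$ to invert it. You spell out the invertibility step (absolute indecomposability plus nilpotent radical) a bit more explicitly than the paper does, but there is no substantive difference.
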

\begin{proof}
Since $\md_J(V) \neq 0$ then $\tr_R(\alpha \circ \beta)$ is an invertible endomorphism of $J$.   Now, let $\alpha':J\rightarrow V\otimes W^*$ and $\beta': V\otimes W^* \rightarrow J$ be the morphisms given by $\alpha'= ( \beta \otimes \Id_{W^*} )\circ ( \Id_J \otimes b_W )$ and $\beta'= (\Id_J\otimes d'_W) \circ (\alpha \otimes \Id_{W^*})$.   Thus, $\beta'\circ\alpha'=\tr_R(\alpha \circ \beta)$ and since $\tr_R(\alpha \circ \beta)$ is invertible we have $J\in \ideal_V$ and $\ideal_J \subset \ideal_V$.  On the other hand, $\ideal_V \subset \ideal_J$ as $V\in \ideal_J$.
\end{proof}

\begin{lemma}\label{L:dim0}
  Let $V$ be an absolutely simple object which is an object of $\ideal_J$.  Then
  $\md_J(V)\neq 0$ if and only if $\ideal_J=\ideal_V$.
\end{lemma}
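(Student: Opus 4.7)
The plan is to prove the two implications separately, using the key properties of traces on ideals established earlier.

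For the forward direction, assume $\md_J(V) \neq 0$. Since $V \in \ideal_J$, Lemma~\ref{L:Ideal}\eqref{LI:UV} gives $\ideal_V \subseteq \ideal_J$. The reverse inclusion $\ideal_J \subseteq \ideal_V$ is exactly the content of Theorem~\ref{T:dim0}, so $\ideal_J = \ideal_V$.

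For the reverse direction, assume $\ideal_V = \ideal_J$. Then $J \in \ideal_V$, so by definition there exist an object $W \in \operatorname{Ob}(\cat)$ and morphisms $\alpha : J \to V \otimes W$ and $\beta : V \otimes W \to J$ with $\beta \circ \alpha = \Id_J$. The idea is now to compute $\mt_J(\Id_J)$ in two ways using the trace $\{\mt_U\}_{U \in \ideal_J}$ from Theorem~\ref{T:ambi!Trace}. On one hand, by definition this trace restricted to $J$ is the canonical map, so $\mt_J(\Id_J) = \langle \Id_J \rangle = 1$. On the other hand, using the retract of $J$ through $V \otimes W$ together with the axioms \eqref{E:fggf} and \eqref{E:VW}, we get
\begin{equation*}
1 = \mt_J(\Id_J) = \mt_J(\beta \circ \alpha) = \mt_{V \otimes W}(\alpha \circ \beta) = \mt_V\bigl(\tr_R(\alpha \circ \beta)\bigr).
\end{equation*}
(Note that $V \otimes W \in \ideal_J$ since $V \in \ideal_J$, so the middle equality is legitimate.)

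To finish, observe that since $V$ is absolutely simple, $\End_\cat(V) = K \Id_V$, so $\tr_R(\alpha \circ \beta) = \lambda \Id_V$ for some scalar $\lambda \in K$. Then $K$-linearity of $\mt_V$ yields $1 = \lambda \, \mt_V(\Id_V) = \lambda \, \md_J(V)$, forcing $\md_J(V) \neq 0$. There is no real obstacle here; the only subtle point is to check that the trace axioms can be invoked, which reduces to verifying that the objects in question lie in $\ideal_J$, and this is immediate from $V \in \ideal_J$ and the ideal property.
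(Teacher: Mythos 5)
Your proof is correct, and the reverse direction takes a genuinely cleaner route than the paper's. The paper also starts from the retract data $\alpha: J \to V\otimes W$, $\beta: V\otimes W \to J$ witnessing $J \in \ideal_V$, but then additionally fixes a second retract $\alpha': V \to J\otimes W'$, $\beta': J\otimes W' \to V$ witnessing $V \in \ideal_J$, composes them to get a retract $\alpha'', \beta''$ of $J$ through $J\otimes(W'\otimes W)$, and evaluates $\md_J(J)$ via the explicit formula in Definition~\ref{D:moddimdef}. It then needs the identity
$\langle\tr_R(\alpha''\circ\beta'')\rangle = \langle\tr_R(\alpha'\circ\beta')\rangle\,\langle\tr_R(\alpha\circ\beta)\rangle$,
which holds because $V$ is absolutely simple (so the inner partial trace is a scalar multiple of $\Id_V$ and can be pulled out). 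Since $\md_J(J)=1$, the factor $\langle\tr_R(\alpha'\circ\beta')\rangle = \md_J(V)$ must be nonzero. You instead bypass the composed retract entirely and apply the trace axioms \eqref{E:fggf} and \eqref{E:VW} directly to the chain
$1 = \mt_J(\Id_J) = \mt_J(\beta\alpha) = \mt_{V\otimes W}(\alpha\beta) = \mt_V(\tr_R(\alpha\beta))$,
and then invoke absolute simplicity once, at the end, to extract the scalar. This avoids both the auxiliary morphisms $\alpha',\beta',\alpha'',\beta''$ and the multiplicativity computation. Your justification of the intermediate steps (that $J$ and $V\otimes W$ both lie in $\ideal_J$, so \eqref{E:fggf} and \eqref{E:VW} apply) is the correct thing to check. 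What the paper's version buys in exchange for the longer computation is that it stays entirely inside the defining formula \eqref{E:moddimdef} and never invokes the trace axioms for general objects of the ideal; what yours buys is a shorter argument that makes the role of the trace axioms transparent.
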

\begin{proof}
If $\md_J(V)\neq 0$ then Theorem \ref{T:dim0} implies $\ideal_J=\ideal_V$.  On the other hand, if 
$\ideal_V=\ideal_J$ then $J\in \ideal_V$ and so there exists $W$, $\alpha: J\rightarrow V\otimes W$ and $\beta: V\otimes W \rightarrow J$ such that $\beta\circ\alpha=\Id_J$.  Also, since $V\in \ideal_J$ there   exists $W'$, $\alpha': V\rightarrow J\otimes W'$ and $\beta': J\otimes W' \rightarrow V$ such that $\beta'\circ\alpha'=\Id_J$.   Consider the maps $\alpha'':J\rightarrow J\otimes (W'\otimes W)$ and  $\beta'':J\otimes (W'\otimes W)\rightarrow J$ determined by $(\alpha'\otimes \Id_W)\alpha$ and  $\beta(\beta'\otimes \Id_W)$, respectively.   Notice that $\beta''\circ \alpha''=\Id_J.$  So we can compute $\md_J(J)$ with these morphisms, in particular 
$$\md_J(J)=\mt(\Id_J)=\mt_J(\Id_J)=\langle\tr_R(\alpha''\circ \beta'')\rangle.$$
Now since $V$ is absolutely simple we have $\langle\tr_R(\alpha''\circ \beta'')\rangle=\langle\tr_R(\alpha'\circ \beta')\rangle\langle\tr_R(\alpha\circ \beta)\rangle$ which is non-zero as $\mt(\Id_J)=1$.  Thus, $\md_J(V)=\langle\tr_R(\alpha\circ \beta)\rangle$ is invertible and so non zero.
\end{proof}

\begin{corollary}\label{C:dimthen0}
  Let $V$ be an absolutely simple object in $\ideal_J$ and $U\in\ideal_V$.  If
  $\md_J(V)= 0$ then $\md_J(U)=0$.
\end{corollary}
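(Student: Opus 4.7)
The plan is to argue by contradiction, using Theorem~\ref{T:dim0}, Lemma~\ref{L:dim0}, and the transitivity of ideals from Lemma~\ref{L:Ideal}\eqref{LI:UV}. Everything we need has already been established in the two immediately preceding statements, so no new graphical calculation should be required.

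Suppose for contradiction that $\md_J(U) \neq 0$. First I would observe that $U \in \ideal_J$: this follows because $U \in \ideal_V$ and $V \in \ideal_J$, together with Lemma~\ref{L:Ideal}\eqref{LI:UV}, which gives $\ideal_V \subset \ideal_J$. Thus $\md_J(U)$ is defined and, by hypothesis, nonzero, so Theorem~\ref{T:dim0} applied to $U$ yields $\ideal_U = \ideal_J$.

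Next I would apply Lemma~\ref{L:Ideal}\eqref{LI:UV} in the other direction: since $U \in \ideal_V$, we have $\ideal_U \subset \ideal_V$. Chaining the two containments gives
\[
\ideal_J \;=\; \ideal_U \;\subset\; \ideal_V \;\subset\; \ideal_J,
\]
so $\ideal_V = \ideal_J$. But $V$ is absolutely simple, so the reverse implication in Lemma~\ref{L:dim0} forces $\md_J(V) \neq 0$, contradicting the assumption that $\md_J(V) = 0$.

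There is no real obstacle here; the only thing to be careful about is tracking the direction of the containments between $\ideal_U$, $\ideal_V$, and $\ideal_J$, and making sure the absolute simplicity of $V$ is only invoked at the very end (as a hypothesis of Lemma~\ref{L:dim0}). No hypothesis is needed on $U$ beyond membership in $\ideal_V$.
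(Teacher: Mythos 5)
Your proof is correct and matches the paper's argument essentially step for step: assume $\md_J(U)\neq 0$, invoke Theorem~\ref{T:dim0} to get $\ideal_U=\ideal_J$, chain the containments via Lemma~\ref{L:Ideal} to conclude $\ideal_V=\ideal_J$, and then use Lemma~\ref{L:dim0} (which needs absolute simplicity of $V$) to reach the contradiction. Your extra remark that $U\in\ideal_J$ (so that $\md_J(U)$ is defined) is a small but welcome point of care that the paper leaves implicit.
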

\begin{proof}
Suppose that $\md_J(U)\neq 0$ then $\ideal_U=\ideal_J$.  Then $\ideal_V=\ideal_J$ as $\ideal_J=\ideal_U\subset \ideal_V=\ideal_V$.  Finally, since $V$ is absolutely simple then $\md_J(V)\neq 0$ which is a contradiction.
\end{proof}

\subsection{The Modified Dimension and Exact Sequences}\label{SS:moddimandexactseq} \emph{Throughout this section we assume $\cat$ is an abelian category and $d_{V}: V^{*}\otimes V \to \unit$ is an epimorphism for all objects $V$.}  Recall from Section~\ref{SS:ribboncats} that in this setting the tensor functor is necessarily exact.  We will now show that when $J$ is an ambidextrous object $\ideal_J$ and $\md_J$ have a meaning in terms of the splitting of certain exact sequences in $\cat$.  In the specific setting of modular representations of finite groups similar results were first considered by Okuyama \cite{Ok}, Carlson and Peng \cite{CP}, and Benson and Carlson \cite{BC}.   In particular, in that setting an object in $P \in \ideal_{V}$ is called \emph{V-projective} by the authors of \cite{Ok, CP}.  The following results show that their techniques apply in the general setting of ribbon Ab-categories.  

\begin{lemma}\label{L:PSplit}
Let $V,W\in \ob$, $P\in \ideal_V$,  and let $h:P\rightarrow W$ be a morphism.  If  $U \xrightarrow{g} W \rightarrow 0$ is an exact sequence such that $U\otimes V \xrightarrow{g\otimes \Id_V} W\otimes V \rightarrow 0$ is split then there exists a morphism $\hat{h}:P\rightarrow U$ such that $g\circ \hat{h}=h$.
\end{lemma}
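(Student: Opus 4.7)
The plan is to reduce the problem to the case $P=V^*\otimes Y$ and then construct the lift $\hat h$ explicitly from the given section using the duality morphisms and braiding.

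First, since $P\in \ideal_V$, Lemma~\ref{L:Ideal}\eqref{LI:VV*} gives $\ideal_V=\ideal_{V^*}$, so there exist $Y\in\ob$ and morphisms $\alpha\colon P\to V^*\otimes Y$, $\beta\colon V^*\otimes Y\to P$ with $\beta\alpha=\Id_P$. If we can produce $\widetilde h\colon V^*\otimes Y\to U$ with $g\widetilde h=h\beta$, then $\hat h:=\widetilde h\alpha$ satisfies $g\hat h=h\beta\alpha=h$, and we are done. So it suffices to lift morphisms out of $V^*\otimes Y$.

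Second, I would convert the given section $s\colon W\otimes V\to U\otimes V$ of $g\otimes \Id_V$ into a section of $\Id_V\otimes g$. Setting $s_L:=c_{U,V}\circ s\circ c_{W,V}^{-1}\colon V\otimes W\to V\otimes U$, naturality of the braiding gives $(\Id_V\otimes g)\circ c_{U,V}=c_{W,V}\circ(g\otimes \Id_V)$, whence $(\Id_V\otimes g)\circ s_L=c_{W,V}\circ(g\otimes \Id_V)\circ s\circ c_{W,V}^{-1}=\Id_{V\otimes W}$.

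Third, I would use the rigid-category adjunction $\Hom_{\cat}(V^*\otimes Y,W)\cong\Hom_{\cat}(Y,V\otimes W)$ to pass between $h\beta$ and its adjunct $\varphi:=(\Id_V\otimes h\beta)\circ(b_V\otimes \Id_Y)\colon Y\to V\otimes W$, lift $\varphi$ to $\widetilde\varphi:=s_L\circ\varphi\colon Y\to V\otimes U$, and convert back to
\[
\widetilde h:=(d_V\otimes \Id_U)\circ(\Id_{V^*}\otimes\widetilde\varphi)\colon V^*\otimes Y\longrightarrow U.
\]
Verifying $g\widetilde h=h\beta$ is then a direct graphical computation: commuting $g$ past $d_V\otimes \Id_U$ and applying $(\Id_V\otimes g)\circ s_L=\Id$ collapses $\widetilde\varphi$ back to $\varphi$, after which the snake identity from \eqref{E:defduality} turns the resulting expression back into $h\beta$.

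The subtle point — and the step I expect to be the main obstacle — is the reduction to $V^*\otimes Y$ rather than to $V\otimes X$. The most natural first attempt uses $P\in \ideal_V$ directly and tries to lift through $\Id_{V^*}\otimes g$, but the given section of $g\otimes \Id_V$ only produces, via the braiding, a section of $\Id_V\otimes g$, and any attempt to pass from this to a section of $\Id_{V^*}\otimes g$ by inserting an additional $b_V$ and contracting with $d'_V$ introduces an unwanted factor of $\dim_{\cat}(V)$, which in the examples of interest to this paper is precisely the quantity we are unable to invert. Invoking $\ideal_V=\ideal_{V^*}$ matches the direction of the adjunction to the side of $g$ on which $V$ is tensored, so this factor never appears.
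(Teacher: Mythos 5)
Your proof is correct. Let me compare it with the paper's argument.

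The paper reduces the problem to showing that $g_{*}:\Hom_{\cat}(P,U)\to\Hom_{\cat}(P,W)$ is onto. It does this by transporting the question, via the adjunction $\Hom_{\cat}(P,-)\cong\Hom_{\cat}(\unit,-\otimes P^{*})$, to showing that the epimorphism $g\otimes\Id_{P^{*}}$ is split. To split it, the paper notes that $P^{*}\in\ideal_{V}$ (via Lemma~\ref{L:Ideal}), writes $P^{*}$ as a retract of $V\otimes X$, and conjugates the hypothesized section of $g\otimes\Id_{V\otimes X}$ through the retraction maps $\Id\otimes\alpha$, $\Id\otimes\beta$. This keeps $V$ on the right of $g$ throughout, so no braiding is needed at this step.

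You instead write $P$ itself as a retract of $V^{*}\otimes Y$ using $\ideal_{V}=\ideal_{V^{*}}$, which forces $V$ onto the left of $g$; this is why you need the braiding conjugation $s\mapsto s_{L}=c_{U,V}\circ s\circ c_{W,V}^{-1}$ before applying the adjunction $\Hom(V^{*}\otimes Y,-)\cong\Hom(Y,V\otimes-)$. You then build $\hat h$ explicitly rather than arguing abstractly that $g_{*}$ is onto. Both routes rest on the same two ingredients — Lemma~\ref{L:Ideal}(2) and the rigid-category adjunction — but the paper's bookkeeping stays on the side of the tensor product where the hypothesis already lives, while yours crosses to the other side and pays for it with one braiding move. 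Your closing remark about $\dim_{\cat}(V)$ correctly identifies the pitfall of trying to manufacture a section of $\Id_{V^{*}}\otimes g$ out of one for $g\otimes\Id_{V}$ using only $b_{V},d'_{V}$, but the paper's proof shows that obstacle is also avoidable by never leaving the right-hand side in the first place.

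One small point of care that your sketch glosses over: the step ``commuting $g$ past $d_{V}\otimes\Id_{U}$'' is simply bifunctoriality of $\otimes$ applied to $d_{V}\colon V^{*}\otimes V\to\unit$ and $g\colon U\to W$, and the collapse to $h\beta$ is exactly the snake identity $(d_{V}\otimes\Id_{V^{*}})\circ(\Id_{V^{*}}\otimes b_{V})=\Id_{V^{*}}$ of \eqref{E:defduality}. This is indeed direct, and the verification goes through.
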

\begin{proof}
To prove the lemma we will show that $ \Hom_\cat(P,U) \xrightarrow{g_*}  \Hom_\cat(P,W)$ is onto.   The map $\Hom_\cat(P,W)\rightarrow \Hom_\cat(\unit,W\otimes P^*)$ given by $f\mapsto (f\otimes \Id_{P^*})\circ b_{P}$ is invertible (the inverse given by $k\mapsto (\Id_W\otimes d_P)\circ(k\otimes \Id_P))$.   Using these maps we have the following commutative diagram:
$$ \xymatrix
 { \Hom_\cat(P,U) \ar[rr]^{g_*} \ar[d] & & \Hom_\cat(P,W)  \\
\Hom_\cat(\unit,U \otimes P^*) \ar[rr]^{(g\otimes \Id_{P^*})_*}  & & \ar[u]\Hom_\cat(\unit,W \otimes P^*).}$$
Therefore, it suffices to show $(g\otimes \Id_{P^*})_*$ is onto.  

With this in mind let us prove the following claim: the exact sequence $U\otimes P^* \xrightarrow{g\otimes \Id_{P^*}} W\otimes P^* \rightarrow 0$ is split.   Since $P\in \ideal_V$, Lemma \ref{L:Ideal} implies that $P^*\in \ideal_V$.  So there exists an object $X$ and morphisms $\alpha: P^*\rightarrow V\otimes X$ and $\beta: V\otimes X \rightarrow P^*$ such that $\beta\circ\alpha =\Id_{P^*}$.   We have the following commutative diagram:
$$ \xymatrix
 {U\otimes V\otimes X \ar[rr]^{g \otimes \Id_{V}\otimes \Id_{X}} \ar[d]^{\Id_U\otimes \beta} & & W \otimes V\otimes X  \ar[r] \ar[d]^{\Id_W\otimes \beta}& 0 \\
U\otimes P^* \ar[rr]^{g\otimes \Id_{P^*}}  & & W\otimes P^* \ar[r] &0.}$$
The hypothesis of the lemma imply that the top horizontal map splits, i.e. there exists a morphism $k:W \otimes V\otimes X\rightarrow U\otimes V\otimes X$ such that $(g \otimes \Id_{V}\otimes \Id_{X}) k=\Id_{U\otimes V\otimes X}$.  Let $\hat{k} = (\Id_U\otimes \beta)k(\Id_W\otimes \alpha)$.  Then 
\begin{align*}
(g\otimes \Id_{P^*})\hat k &=(g\otimes \Id_{P^*})(\Id_U\otimes \beta)k(\Id_W\otimes \alpha)\\
&=(\Id_{W}\otimes\beta) (g \otimes \Id_{V}\otimes \Id_{X})k(\Id_W\otimes \alpha)=\Id_{W\otimes P^*}
\end{align*}
and the claim follows.

The morphism $\hat{k}_*:\Hom_\cat(\unit, W\otimes P^*)  \rightarrow \Hom_\cat(\unit,U\otimes P^*)$ is the right inverse to $(g\otimes \Id_{P^*})_*$ and so $(g\otimes \Id_{P^*})_*$ is onto.  Thus, $g_*$ is onto and the result is proved.
\end{proof}

\begin{theorem}\label{T:CanSurIdeal}
Let $V$ and $W$ be any objects of $\cat$ such that $V\in \ideal_W$.  The canonical epimorphism 
\begin{equation}\label{E:canVW}
V^*\otimes V\otimes W \xrightarrow{d_V\otimes \Id_W} W \rightarrow 0
\end{equation}
is split if and only if $\ideal_V=\ideal_W$.
\end{theorem}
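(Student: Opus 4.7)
The plan is to prove the two implications separately, handling the forward direction by invoking the basic closure properties of ideals from Lemma~\ref{L:Ideal}, and the reverse direction by constructing an explicit splitting.

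\textbf{Forward direction.} Suppose the canonical map splits, so $W$ is a retract of $V^{*}\otimes V\otimes W$. Note that $V\otimes W\in \ideal_{V}$ trivially (take $\alpha=\beta=\Id_{V\otimes W}$) and $V^{*}\in \ideal_{V^{*}}=\ideal_{V}$ by Lemma~\ref{L:Ideal}\eqref{LI:VV*}; closure under tensor products then gives $V^{*}\otimes V\otimes W\in \ideal_{V}$, and closure under retracts forces $W\in \ideal_{V}$. By Lemma~\ref{L:Ideal}\eqref{LI:UV} we obtain $\ideal_{W}\subseteq \ideal_{V}$, while the hypothesis $V\in \ideal_{W}$ supplies the reverse inclusion. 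Hence $\ideal_{V}=\ideal_{W}$.

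\textbf{Reverse direction.} Assume $\ideal_{V}=\ideal_{W}$, so that $W\in \ideal_{V}$, and fix $X\in \ob$ together with $\alpha:W\to V\otimes X$ and $\beta:V\otimes X\to W$ with $\beta\circ\alpha=\Id_{W}$. The crux is to show that $d_{V}\otimes \Id_{V}:V^{*}\otimes V\otimes V\to V$ admits a section $\tau:V\to V^{*}\otimes V\otimes V$. Set
\[
\tilde{\tau}=(c_{V,V^{*}}\otimes \Id_{V})\circ(\Id_{V}\otimes b'_{V}).
\]
From the definition $d'_{V}=d_{V}\circ c_{V,V^{*}}\circ(\theta_{V}\otimes \Id_{V^{*}})$ we get $d_{V}\circ c_{V,V^{*}}=d'_{V}\circ(\theta_{V}^{-1}\otimes \Id_{V^{*}})$. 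Combined with the triangle identity $(d'_{V}\otimes \Id_{V})\circ(\Id_{V}\otimes b'_{V})=\Id_{V}$ (which holds in every ribbon category for the dual pairing $b'_{V},d'_{V}$, and can be checked by the graphical calculus), this gives $(d_{V}\otimes \Id_{V})\circ \tilde{\tau}=\theta_{V}^{-1}$. Therefore $\tau:=\tilde{\tau}\circ \theta_{V}$ is a section.

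\textbf{Assembling the splitting.} Define
\[
s=(\Id_{V^{*}\otimes V}\otimes \beta)\circ(\tau\otimes \Id_{X})\circ \alpha : W\to V^{*}\otimes V\otimes W.
\]
Using the interchange identity $(d_{V}\otimes \Id_{W})\circ(\Id_{V^{*}\otimes V}\otimes \beta)=\beta\circ(d_{V}\otimes \Id_{V\otimes X})$ and $(d_{V}\otimes \Id_{V}\otimes \Id_{X})\circ(\tau\otimes \Id_{X})=\Id_{V\otimes X}$, a one-line computation yields $(d_{V}\otimes \Id_{W})\circ s=\beta\circ \alpha=\Id_{W}$, so $s$ splits the canonical epimorphism. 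The main obstacle is the construction of $\tau$: the naive candidate $b'_{V}\otimes \Id_{V}$ satisfies only $(d_{V}\otimes \Id_{V})\circ(b'_{V}\otimes \Id_{V})=\dim_{\cat}(V)\cdot \Id_{V}$, which can vanish in the examples of interest. The trick is to braid the two $V^{*}$ and $V$ factors into the right order before pairing, producing a $\theta_{V}^{-1}$ that is then canceled by inserting $\theta_{V}$ on the input; after this, everything else is routine bookkeeping from the graphical calculus.
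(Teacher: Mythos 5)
Your proof is correct, and your reverse direction takes a genuinely different route from the paper's. The paper invokes its Lemma~\ref{L:PSplit} (a relative-projectivity lifting lemma) with $h=\beta$ and $g=d_V\otimes\Id_W$, noting that $V\otimes V^*\otimes V\otimes W\xrightarrow{\Id_V\otimes d_V\otimes\Id_W}V\otimes W$ is split by $b_V\otimes\Id_{V\otimes W}$, and then concludes that $\beta$ lifts through $d_V\otimes\Id_W$; the splitting is $\hat h\circ\alpha$. You instead build the section by hand: you manufacture a section $\tau$ of $d_V\otimes\Id_V:V^*\otimes V\otimes V\to V$ by braiding so as to feed the $(b'_V,d'_V)$ zig-zag rather than paying the price of a categorical dimension, correct for the twist that the braiding introduces, and then transport $\tau$ through the retraction $\alpha,\beta$ exhibiting $W\in\ideal_V$. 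Both are valid; the paper's route isolates Lemma~\ref{L:PSplit} as a reusable tool (it encapsulates the ``$V$-projective'' lifting principle), while yours is more self-contained and makes the actual splitting morphism completely explicit, at the cost of re-deriving a zig-zag identity for the pairing $(b'_V,d'_V)$ that the paper never needs to state. One small remark on your forward direction: you can shorten it by observing directly that $V^*\otimes V\otimes W\cong V\otimes(W\otimes V^*)$ via the braiding, so it lies in $\ideal_V$ without first placing $V^*$ there; but your version using Lemma~\ref{L:Ideal} is equally correct.
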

\begin{proof}
If the sequence in Equation \eqref{E:canVW} splits then $W\in \ideal_V$.  Since $V\in \ideal_W$ we have $\ideal_W=\ideal_V$.  On the other hand, suppose $\ideal_V=\ideal_W$.   Then there exists an object $X$ and morphisms $\alpha: W\rightarrow V\otimes X$ and $\beta: V\otimes X \rightarrow W$ such that $\beta\circ\alpha =\Id_{W}$.   The sequence 
$$V\otimes V^*\otimes V\otimes W \xrightarrow{\Id_V\otimes d_V\otimes \Id_W}V\otimes W \rightarrow 0$$
 is split by the morphism $b_V\otimes \Id_V\otimes \Id_W$.   Therefore, we can apply Lemma \ref{L:PSplit} to $h=\beta$ and $g=d_V\otimes \Id_W$ to obtain a morphism $\hat h: V\otimes X \rightarrow V^*\otimes V \otimes W$ such that $(d_V\otimes \Id_W) \circ \hat h = \beta$, i.e. $\hat h \circ \alpha$ provides a splitting for \eqref{E:canVW}. 
\end{proof}

\begin{corollary}\label{C:splitd}  Assume $J$ is an ambidextrous object and $V$ is a absolutely simple object in $\ideal_J$ then 
$$V^*\otimes V \otimes J\xrightarrow{d_V\otimes \Id_J} J\rightarrow 0$$
splits if and only if $\md_J(V)\neq 0$. 
\end{corollary}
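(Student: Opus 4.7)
The plan is very short: this corollary is essentially a concatenation of two equivalences already established immediately above. Concretely, Theorem~\ref{T:CanSurIdeal} and Lemma~\ref{L:dim0} each furnish one half of the chain of ``iffs'' needed here, so the only work is to check that both results apply to our $V$ and $J$ and then stitch the equivalences together.

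First I would apply Theorem~\ref{T:CanSurIdeal} with $W=J$. The hypothesis $V\in\ideal_J$ is given, so the theorem immediately tells us that the canonical epimorphism
\[
V^*\otimes V\otimes J\xrightarrow{d_V\otimes \Id_J} J\rightarrow 0
\]
splits if and only if $\ideal_V=\ideal_J$. Note that this step does not use the ambidextrous hypothesis on $J$ or absolute simplicity of $V$; those only enter in the second half.

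Next I would invoke Lemma~\ref{L:dim0}. Since $J$ is ambidextrous, the modified dimension $\md_J$ is defined on $\ideal_J$ via Definition~\ref{D:moddimdef} (which relies on Theorem~\ref{T:ambi!Trace}). Absolute simplicity of $V$ together with $V\in\ideal_J$ are precisely the hypotheses of Lemma~\ref{L:dim0}, which yields $\md_J(V)\neq 0$ if and only if $\ideal_J=\ideal_V$.

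Combining the two equivalences, the epimorphism $d_V\otimes\Id_J$ splits if and only if $\ideal_V=\ideal_J$, which holds if and only if $\md_J(V)\neq 0$. There is no substantive obstacle here; the entire content of the corollary was absorbed into the preceding lemma and theorem, and the role of this statement is mainly to isolate the geometrically suggestive special case $W=J$ of Theorem~\ref{T:CanSurIdeal} in terms of the invariant $\md_J$.
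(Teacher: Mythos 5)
Your proof is correct and follows exactly the same route as the paper's: apply Theorem~\ref{T:CanSurIdeal} with $W=J$ to equate splitting of $d_V\otimes\Id_J$ with $\ideal_V=\ideal_J$, then invoke Lemma~\ref{L:dim0} to equate $\ideal_V=\ideal_J$ with $\md_J(V)\neq 0$. Nothing to add.
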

\begin{proof}
From Lemma \ref{L:dim0} we know that $\md_J(V)\neq 0$ if and only if  $\ideal_J=\ideal_V$.  Thus, the  corollary follows from Theorem \ref{T:CanSurIdeal}.

\end{proof}

We note that in the context of modular representations of a finite group, the above corollary is proven in the case of $J=\unit$ (i.e.\ the trivial module) but with the weaker assumption that $V$ is absolutely indecomposable by Benson and Carlson \cite[Theorem 2.1]{BC}.  

\begin{remark}
When $V$ is an arbitrary element of $ \ideal_J$, Theorem \ref{T:dim0} implies that the if direction of Corollary \ref{C:splitd} still hold.
\end{remark}

\subsection{Projective Objects}\label{SS:projectives}  We record a few elementary results on projective objects in $\cat$. Set $\Proj$ to be the full subcategory of projective objects in $\cat$:
\begin{equation}\label{E:Projdef}
\Proj = \left\{\text{projective objects in $\cat$} \right\}.
\end{equation}   It is straightforward to verify that $\Proj$ is an ideal in
$\cat$.

\begin{lemma}\label{L:projectives}  For any ideal $\ideal$ of $\cat $ one has 
\[
\Proj  \subseteq \ideal.
\] Furthermore, an object $V$ of $\cat$ is projective if and only if  
\[
\ideal_{V} = \Proj.
\]
\end{lemma}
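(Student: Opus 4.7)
The plan is to exploit the duality structure of $\cat$ to realize any projective $P$ as a retract of an object of the form $W^{*}\otimes W\otimes P$ whenever $W\in \ideal$, together with the fact that tensoring with any object preserves projectivity.

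For the first inclusion $\Proj \subseteq \ideal$, fix a non-empty ideal $\ideal$, pick $W\in \ideal$, and let $P\in \Proj$. I would consider the morphism
\begin{equation*}
d_{W}\otimes \Id_{P}\colon W^{*}\otimes W\otimes P\longrightarrow P,
\end{equation*}
which is an epimorphism because $d_{W}$ is (by the standing hypothesis carried forward from Section~\ref{SS:moddimandexactseq}) and tensoring with $P$ is right exact. Projectivity of $P$ splits this epi, so $P$ is a retract of $W^{*}\otimes W\otimes P$. The latter object lies in $\ideal$ by using the braiding to reorder the tensor factors as $(W\otimes P)\otimes W^{*}$ and invoking closure under tensoring with an arbitrary object; closure under retracts then places $P$ in $\ideal$.

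For the second claim, the ``if'' direction is immediate since $V\in \ideal_{V}=\Proj$ forces $V$ to be projective. For the converse, the first claim already gives $\Proj \subseteq \ideal_{V}$ automatically, so I just need the reverse inclusion $\ideal_{V}\subseteq \Proj$. The key step is that $V$ projective implies $V\otimes X$ projective for every $X\in \ob$: this follows from the duality adjunction $\Hom_{\cat}(V\otimes X,-)\cong \Hom_{\cat}(V,-\otimes X^{*})$ together with exactness of $-\otimes X^{*}$ noted at the end of Section~\ref{SS:ribboncats}, since an epi $A\twoheadrightarrow B$ yields an epi $A\otimes X^{*}\twoheadrightarrow B\otimes X^{*}$ which $\Hom_{\cat}(V,-)$ preserves by projectivity of $V$. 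Since projectivity is stable under retracts (a standard fact in abelian categories), every retract of some $V\otimes X$—equivalently, every object of $\ideal_{V}$—is projective.

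The only mildly delicate point is bookkeeping: the definition of an ideal is stated for tensoring with an arbitrary object on one side, whereas the arguments above freely permute tensor factors. This is harmless because the braiding supplies natural isomorphisms between the various orderings, and ideals, being full subcategories, are closed under isomorphism. Beyond this routine verification there is no serious obstacle; both claims are essentially formal consequences of duality together with the lifting property of projectives.
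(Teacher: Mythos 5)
Your proof is correct and follows essentially the same route as the paper's: both realize a projective $P$ as a retract of $W^{*}\otimes W\otimes P$ via the split epimorphism $d_{W}\otimes\Id_{P}$, and both deduce $\ideal_{V}\subseteq\Proj$ from the fact that tensoring a projective with an arbitrary object stays projective. You spell out some details the paper leaves implicit (the braiding to reorder tensor factors, and the duality adjunction behind projectivity of $V\otimes X$), but the substance is identical.
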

\begin{proof} 
  Let $U\in \ideal$.  As in the previous result, one has an epimorphism
\[
U^*\otimes U \otimes P\xrightarrow{d_U\otimes \Id_P} P\rightarrow 0
\] 
for any object $P$.  When $P$ is projective this epimorphism necessarily
splits. Thus every projective object is an object in $\ideal$.  This
proves the first statement. For the second statement of the lemma, we observe
that when $\ideal_{V}=\Proj $ then, since $V \in \ideal_{V}$, $V$ is clearly
projective.  On the other hand, since the tensor product is an exact functor
it follows that if $V$ is a projective, then $V \otimes M$ (and, hence, any
direct summand) is projective for any object $M$ of $\cat$.  Thus if $V$ is
projective, then necessarily $\ideal_{V}$ consists of only projectives.
\end{proof}

The previous lemma implies that if $J$ is ambidextrous and $P$ is in $\Proj$, then one can always consider $\md_{J}(P)$.  Theorem~\ref{T:dim0} immediately implies the following result.  
\begin{corollary}\label{C:mdvanishesonproj}  Let $J$ be an ambidextrous object in $\cat$ which is not projective.  Then 
\[
\md_{J}(P)=0
\]
for all $P$ in $\Proj$.

\end{corollary}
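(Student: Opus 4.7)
The plan is to proceed by contradiction, combining Theorem~\ref{T:dim0} with the characterization of projective objects given in Lemma~\ref{L:projectives}. First I would observe that by Lemma~\ref{L:projectives} every projective object lies in the ideal $\ideal_J$, so $\md_J(P)$ is defined for each $P \in \Proj$.

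Suppose, toward a contradiction, that there exists some $P \in \Proj$ with $\md_J(P) \neq 0$. Then Theorem~\ref{T:dim0}, applied with $V = P$, forces $\ideal_P = \ideal_J$. On the other hand, since $P$ is projective, the second statement of Lemma~\ref{L:projectives} gives $\ideal_P = \Proj$. Combining these two equalities yields $\ideal_J = \Proj$, and applying Lemma~\ref{L:projectives} once more (in the other direction, taking $V = J$) forces $J$ itself to be projective, contradicting the hypothesis on $J$.

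There is no real obstacle here: the content of the corollary is already packaged into Theorem~\ref{T:dim0} and Lemma~\ref{L:projectives}, and the argument is just a two-line chain of implications. The only minor point to verify is that the hypotheses of Theorem~\ref{T:dim0} are satisfied, namely that $P \in \ideal_J$, which is exactly the first part of Lemma~\ref{L:projectives}.
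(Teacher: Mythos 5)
Your proof is correct and matches the paper's intended argument: the paper states that the corollary follows immediately from Theorem~\ref{T:dim0} together with Lemma~\ref{L:projectives}, and you have supplied exactly that chain of implications (contrapositive via $\ideal_P = \ideal_J = \Proj$ forcing $J$ projective). Nothing is missing.
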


\section{The ground ring $K$}\label{S:ComRing}  As stated at the beginning, for convenience we assumed $K=\End_{\cat}(\unit )$ is a field.   In general $K$ is only a commutative ring.   For example, this occurs when $\cat$ is the finite dimensional representations of a Drinfeld-Jimbo quantum group defined over $\C [[h]]$.  However an examination of the proofs will confirm that the results presented so far hold for general $K$ once one makes suitable modifications.   For example, in Lemma~\ref{P:ambdim}\eqref{LI:amb2} the condition that $\md_J(V)\neq 0$  should be revised to the condition that  $\md_J(V)$ is not a zero divisor in $K$.  In Section~\ref{S:ModDim} the condition that $\md_J(V)\neq 0$  should be replaced with the condition that  $\md_J(V)$ is invertible in $K$.  The interested reader should have no difficulty in obtaining the general results.

\section{Representations of Lie superalgebras and the Generalized Kac-Wakimoto Conjecture}\label{S:Liesuperalgebras}   In this section we consider the ribbon category $\cat$ given by finite dimensional representations of a Lie superalgebra $\fg$ which are semisimple over $\fg_{\0}$.  As we explain, the tensor product and duality are given by the usual coproduct and antipode on $\fg$.  The braiding is given by the graded flip map.  See Section~\ref{SS:Liesuperalgs} for details on the ribbon category structure on $\cat$.  In this setting we will see that the modified dimension function generalizes superdimension and is closely related to the combinatorially defined notions of defect and atypicality.  In particular, it provides a new point of view on a conjecture of Kac and Wakimoto. 

\subsection{Representations of Lie superalgebras and Atypicality }\label{SS:Liesuperalgs} Recall that a Lie superalgebra $\fg = \fg_{\0} \oplus \fg_{\1}$ is a $\Z_{2}$-graded complex vector space with a bilinear map $[\; , \; ] : \fg \otimes \fg  \to \fg$ which satisfies graded versions of the conditions on a Lie algebra bracket.  A $\fg$-supermodule is a $\Z_{2}$-graded finite dimensional complex vector space $M=M_{\0} \oplus M_{\1}$ which admits a graded action by $\fg$ and satisfies graded versions of the conditions on a Lie algebra module.  The finite dimensional representations of Lie superalgebras have been the object of intense study for over thirty years.  We refer the reader to \cite{Kac1, Kacnote} for background on Lie superalgebras and their representations.

The category $\cat $ in this context will be the category of all finite
dimensional $\fg$-supermodules 
whose restriction to $\fg_{\0}$ is semisimple.  If $M$ and $N$ are two
objects of $\cat$, then a morphism $f: M \to N$ is a linear map which
preserves the $\Z_{2}$-grading in the sense that $f(M_{r}) \subseteq N_{r}$
for $r \in \Z_{2}$ and which satisfies $f(xm)=xf(m)$ for all $x \in \fg$ and
$m \in M$.  Let $M^{*}$ denote the usual linear dual.  That is, $M^{*} =
\Hom_{\C }(M,\C )$ where we declare $\C$ to be $\Z_{2}$-graded by being
concentrated in degree $\0$ and then an element of $M^{*}$ is of degree $\0$
if it preserves the $\Z_{2}$-grading and of degree $\1$ if it reverses the
grading. The action on $M^{*}$ is given by $(xf)(m) = -(-1)^{\overline{x}
  \cdot \overline{f}}f(xm)$ where $x \in \fg$ and $f \in M^{*}$ are assumed to
be homogenous and where 
here and elsewhere we write $\overline{a} \in \Z_{2}$
for the degree of a homogeneous element $a$. We also use the convention 
that a formula is given only on homogenous elements and the
general case is given by applying linearity.  Given $M$ and $N$ in $\cat$, the
tensor product is given by $M \otimes N = M \otimes_{\C} N$ as a vector space.
The $\Z_{2}$-grading is given by the formula $\overline{m \otimes n} =
\overline{m}+\overline{n}$ for all homogeneous $m \in M$ and $n\in N$.  The
action of $\fg$ is given by the formula
\[
x.(m\otimes n) = (x.m) \otimes n + (-1)^{\p{x}\;\p{m}} m \otimes (x.n)
\]
for all homogenous $x \in \fg$, $m \in M$, and $n \in N$.   The unit object is then the trivial supermodule $\C$ (which is concentrated in degree $\0$ in the $\Z_{2}$-grading).

The ribbon category structure on $\cat$ is given as follows.  Given an object $V \in \cat$ the map $b_{V}: \C  \to V \otimes V^{*}$ is given by $1 \mapsto \sum_{i} v_{i} \otimes v_{i}^{*}$ where $\{v_{i} \}$ is a homogeneous basis for $V$ and $v_{i}^{*}\in V^{*}$ is defined by $v_{i}^{*}(v_{j}) = \delta_{i,j}$.  The map $d_{V}: V^{*}\otimes V \to \C$ is the evaluation map $f \otimes v \mapsto f(v).$  The braiding $c_{V,W}: V \otimes W \to W \otimes V$ is given by $v \otimes w \mapsto (-1)^{\overline{v} \cdot \overline{w}}w \otimes v$.  Observe that $c_{W,V} \circ c_{V,W} = \Id_{V \otimes W}$ for all objects $V,W \in \cat$ and hence by definition $\cat$ is symmetric.  The twist maps are the identity.

A Lie superalgebra $\fg$ is said to be \emph{basic} if it admits a nondegenerate even invariant bilinear form.  The simple basic Lie superalgebras were classified by Kac \cite{Kac1} and are, in the notation of \emph{loc. cit.}, the Lie superalgebras of type $A(m,n)$, $B(m,n)$, $C(n)$, $D(m,n)$, $D(2,1;\alpha)$, $F(4)$, and $G(3)$.  Note that these are also \emph{classical} as $\fg_{\0}$ is a reductive Lie algebra.  \emph{Throughout we will assume that our Lie superalgebras are basic and classical}.

Let $\fg$ be a basic classical Lie superalgebra and fix $\ft$ to be 
a maximal torus contained in $\g_{\0}$. Also fix a choice of Borel subalgebra $\fb \subset \fg$ which contains $\ft$.  Having done so, one can assign to each simple $\fg$-supermodule in $\cat$ a highest weight $\lambda \in \ft^{*}$ and we write $L(\lambda)$ for the simple supermodule labelled by $\lambda \in \ft^{*}$.  

Let $\Phi$ be the set of roots with respect 
to $\ft$. We have that $\Phi=\Phi_{\0}\sqcup \Phi_{\1}$ 
where $\Phi_{\0}$ (resp.\ $\Phi_{\1}$) is the set of even roots (resp.\ odd roots). 
The positive roots will be denoted by $\Phi^{+}$ and the negative roots by $\Phi^{-}$. 
Set $\Phi_{\0}^{\pm}=\Phi_{\0}\cap \Phi^{\pm}$ and $\Phi_{\1}^{\pm}=\Phi_{\1}\cap \Phi^{\pm}$. 
The bilinear form on $\fg$ induces a bilinear form on $\ft^{*}$ which we denote by $(\; , \; )$.   Following Kac and Wakimoto \cite[Section~2]{KW} we
define the \emph{defect of $\fg$}, denoted by $\defect(\fg),$ to be the 
dimension of a maximal isotropic subspace in the $\mathbb{R}$-span of $\Phi$.     

Let $\lambda\in \ft^{*}$. The \emph{atypicality} of $\lambda$, denoted $\atyp (\lambda)$, is the maximal number of linearily independent, mutually orthogonal, positive isotropic roots $\alpha \in  \Phi^{+}$ such that $ (\lambda+\rho,\alpha)=0,$
where $\rho=\frac{1}{2}(\sum_{\alpha\in \Phi_{\0}^{+}} \alpha- \sum_{\alpha\in \Phi_{\1}^{+}} \alpha)$. Note that by definition
\[
\atyp(\lambda)\leq \defect(\fg).
\]  Given a simple supermodule $L(\lambda)$ we define the atypicality of $L(\lambda)$ by $\atyp (L(\lambda)) = \atyp (\lambda)$.  Note that this definition is known  to be independent of the choice of $\ft$ and $\fb$ and, hence, an invariant of the simple supermodule and not the choice of parameterization.  In particular, a simple $\fg$-supermodule $L$ is called  \emph{typical} if $L$ has atypicality zero.  If $L$ is a typical supermodule then $L$ is a projective object  in $\cat$ by \cite[Theorem 1]{Kacnote} and so by Lemma~\ref{L:projectives} $\ideal_L$ is the ideal of projective $\fg$-supermodules $\Proj$.

Given a finite dimensional $\fg$-supermodule, $M$, the \emph{superdimension} of $M$ is the integer 
\[
\sdim (M) = \dim (M_{\0})-\dim (M_{\1}).
\]  

\subsection{The trace on $\Proj$ for Lie superalgebras of Type A or C} \label{SS:TraceProj}
Let $\fg$ be $\glmn$ or a simple Lie superalgebra of Type A or C.  Lemma 2.8 of \cite{GP2} states that we can fix a typical simple $\fg$-supermodule $J$ whose tensor product with itself is semisimple and multiplicity free.  By Remark~\ref{R:ambi} we have that $J$ is ambidextrous and by Theorem~\ref{T:inducedtrace} this induces an ambidextrous trace on $\ideal_{J}=\Proj$.  On the other hand, in \cite{GP4} the first and third authors defined a linear map $\mt_{V}:\End_{\cat} (V) \to  \End_{\cat }(\unit )$ for each typical simple supermodules $V$ using quantum groups and low dimensional topology.  In this subsection we show these two notions coincide and use this result to give an explicit formula for the modified dimension defined on this ideal.

We will now recall a trace which is defined in \cite{GP4}.   Let $\md:\{\text{typical supermodules}\} \rightarrow \C$ be the function defined
  by
  \begin{equation}\label{E:FormulaMD}
    \md(L({\lambda}))=\prod_{\alpha\in\Phi_{\p0}^+} \frac{(\lambda
      +\rho,\alpha)}{(\rho,\alpha)}  
    \Big/ \prod_{\alpha\in \Phi_{\p1}^+}(\lambda +\rho,\alpha)
  \end{equation}
  for any typical supermodule $L(\lambda)$.   Note $\md(L({\lambda}))$ is non-zero for any typical supermodule $L(\lambda)$.  Let $V\in\Proj$ and $f\in\End_\cat(V)$.  Choose a typical supermodule $V_0$.  Then since $\ideal_{V_0}=\Proj$ there exists $W\in \cat$ and morphisms 
  $\alpha\in\Hom_\cat(V_0\otimes W,V)$ and $\beta\in\Hom_\cat(V,V_0\otimes
  W)$ such that $\alpha\circ \beta=\Id_V$.  Then by Theorem 1 of \cite{GP4} we have that the map $\text{str}'_V: \End_{\cat}(V) \to \End_{\cat}(\unit )$ given by 
  $$\text{str}'_V(f)=\md(V_0)\langle\tr_R(\beta \circ f \circ \alpha)\rangle$$
is a well defined linear function depending only on $f$; that is, it does not depends on $V_0$, $W$, $\alpha$ or $\beta$.  Moreover,  the family $\{\text{str}'_V\}_{V \in \Proj }$ is a trace on $\Proj$.   In \cite{GP4} the proof that $\{\text{str}'_V\}$ is a trace uses quantum algebra and low-dimensional topology.

As we mentioned above, Remark \ref{R:ambi} implies that the canonical linear map $\mt=\langle\; \rangle$ is a non-zero ambidextrous trace on $J$.  Let $\{\mt_V\}$ be the unique trace on $ \ideal_J$ determined by $\mt=\mt_J$ (see Theorem \ref{T:ambi!Trace}).  By definition $\text{str}'_J=\md(J)\mt$ (take $V_0=J$, $W=\C$, $\alpha$, $\beta=\alpha^{-1}$, where $\alpha:J\otimes \C\rightarrow J$ is the right unit constraint of $J$).  Then, from the uniqueness of the trace we have $\frac1{\md(J)}\text{str}'_V=\mt_V$ for all $V\in \ideal_J=\Proj$ (note that the factor $\frac1{\md(J)}$ is just a re-normalization constant).  Thus, we recover the trace given in \cite{GP4}.  

Now if $L$ typical then 
\begin{equation}\label{E:moddimfrac}
\md_J(L)=\mt_L(\Id_L)=\frac1{\md(J)}\text{str}'_L(\Id_L)=\frac{\md(L)}{\md(J)}
\end{equation}
where the last equality follows from taking $V_0=L$, $W=\C$ and $\alpha=\beta^{-1}$ (here $\alpha$ is the right unit constraint).   Equation \eqref{E:FormulaMD} now leads to an explicit formula for $\md_J(L)$.  In particular, $\md_J(L)\neq 0$ whenever $J$ and $L$ are both typical.  To summarize, the authors of \cite{GP2} prove the following result.
\begin{theorem}\label{T:typicalcase}  Let $\fg$ be $\glmn$ or a simple Lie superalgebra of Type A or C.  Let $J$ be a typical simple supermodule.  Then $J$ is ambidextrous, $\ideal_{J}=\Proj$, and if $L$ is another typical simple supermodule, then
\[
\md_{J}(L) \neq 0.
\]  Furthermore, $\md_{J}(L)$ can be computed by the explicit formula given by \eqref{E:moddimfrac}.

\end{theorem}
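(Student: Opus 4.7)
The proof unfolds in stages that piece together results already developed in the subsection. First I would invoke Lemma~2.8 of \cite{GP2} to fix a typical simple $\fg$-supermodule $J_0$ such that $J_0\otimes J_0$ is semisimple and multiplicity-free; this is the one place where the Type~A or~C hypothesis genuinely enters. Multiplicity-freeness makes $\End_{\cat}(J_0\otimes J_0)$ commutative, so by Remark~\ref{R:ambi}(1) every linear functional on $\End_{\cat}(J_0)$ is ambidextrous; in particular $J_0$ is ambi. Because typical simples are projective by \cite[Theorem~1]{Kacnote}, Lemma~\ref{L:projectives} gives $\ideal_{J_0}=\Proj$, and Theorem~\ref{T:ambi!Trace} produces a unique trace $\{\mt_V\}_{V\in\Proj}$ extending the canonical trace on $J_0$.

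To promote this to the statement for an arbitrary typical simple $J$, I would bring in the family $\{\text{str}'_V\}_{V\in\Proj}$ of \cite{GP4}. Because it is a trace on $\Proj$, Theorem~\ref{T:inducedtrace} forces each $\text{str}'_V$ to be ambidextrous. Evaluated at the absolutely simple object $V=J$, the functional $\text{str}'_J$ must be a scalar multiple of the canonical trace on $J$, since $\End_{\cat}(J)\cong\C$. From the defining formula this scalar is $\md(J)$, which is non-zero by \eqref{E:FormulaMD}. Hence the canonical trace on $J$ is itself ambidextrous, so $J$ is ambi; and since $J$ is projective, Lemma~\ref{L:projectives} yields $\ideal_{J}=\Proj$.

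The final step is to identify the trace and extract the formula. Rerunning Theorem~\ref{T:ambi!Trace} with $J$ in place of $J_0$ gives a unique trace on $\Proj$ extending the canonical trace on $J$; both $\{\mt_V\}_{V\in\Proj}$ defined from $J$ and the rescaled family $\{\md(J)^{-1}\text{str}'_V\}_{V\in\Proj}$ have this property, so they coincide. Evaluating at $V=L$ using the trivial retraction coming from the right unit constraint $L\otimes\C\xrightarrow{\cong}L$ yields
\[
\md_J(L)=\mt_L(\Id_L)=\frac{\text{str}'_L(\Id_L)}{\md(J)}=\frac{\md(L)}{\md(J)},
\]
which is \eqref{E:moddimfrac}. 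A glance at \eqref{E:FormulaMD} confirms that the ratio is non-zero whenever $L$ is typical.

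The only non-formal ingredient is Lemma~2.8 of \cite{GP2}, and that is where the main obstacle would lie in a self-contained argument: explicitly exhibiting a typical $J_0$ whose tensor square is semisimple and multiplicity-free requires the detailed highest weight combinatorics of Type~A or~C. Everything else is a bookkeeping exercise combining the ambidextrous-trace machinery of Section~\ref{S:generalizedtraces} with the uniqueness theorem and the pre-existing construction of $\{\text{str}'_V\}$ in \cite{GP4}.
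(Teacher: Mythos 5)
Your proof is correct and follows essentially the same route as the paper: both rest on the GP2 multiplicity-freeness lemma, the GP4 trace family $\{\text{str}'_V\}$, and the uniqueness clause of Theorem~\ref{T:ambi!Trace} to identify $\mt_V$ with $\md(J)^{-1}\text{str}'_V$ and read off the formula~\eqref{E:moddimfrac}. If anything you are slightly more careful than the paper on one point: the paper's text only explicitly establishes ambidexterity for the particular $J_0$ furnished by \cite[Lemma~2.8]{GP2}, leaving implicit the promotion to an arbitrary typical simple $J$, whereas you spell this out by noting that $\text{str}'_J=\md(J)\langle\,\rangle$ with $\md(J)\neq 0$, so the canonical map on $J$ is itself a nonzero ambidextrous trace. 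One small observation: once one accepts the GP4 theorem that $\{\text{str}'_V\}$ is a trace on $\Proj$, your first paragraph (the $J_0$ construction via GP2) is no longer logically needed for the remaining steps; the paper retains GP2 because it gives a purely categorical route to an ambidextrous object, independent of the quantum-group and Kontsevich-integral methods underlying GP4's proof that $\text{str}'$ is a trace.
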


Similarly, we expect to be able to combine the techniques of this paper with the ideas and results of \cite{GP4,GPT} to derive a formula for $\md_J(L)$ where $J$ and $L$ are simple supermodules of any basic classical Lie superalgebra.  The formula would be expressed in terms of the super characters of the supermodules $J$ and $L$.  Also, similar arguments show that the representations of quantum $\sll_{2}$ at a root of unity considered in \cite{GPT} also fit within our framework.

\subsection{The Generalized Kac-Wakimoto Conjecture}\label{SS:GKWconj} We now state an intriguing conjecture of Kac and Wakimoto which gives a representation theoretic interpretation of the combinatorial notions of defect and atypicality.

\begin{conjecture} \cite[Conjecture 3.1]{KW} Let $\fg$ be a simple basic classical Lie superalgebra 
and $L(\lambda)$ be a finite-dimensional simple $\fg$-supermodule. Then 
\[
\atyp (L(\lambda)) =\defect(\fg)
\] if and only if  $\sdim (L(\lambda))\neq 0$.  
\end{conjecture}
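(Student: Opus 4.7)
The plan is to recast the Kac-Wakimoto conjecture within the modified-dimension framework of this paper. Since the trivial $\fg$-supermodule $\C$ is precisely the unit object $\unit$ of $\cat$, Example~\ref{Ex:unit} gives the identifications $\sdim(L) = \dim_{\cat}(L) = \md_{\unit}(L)$ for every $L \in \cat$, while by definition $\defect(\fg) = \atyp(\C) = \atyp(\unit)$. Hence the conjecture is precisely the $J = \unit$ instance of the generalized Kac-Wakimoto conjecture of Section~\ref{SS:practice}. The unit $\unit$ is manifestly ambidextrous (its canonical map is the identity on $K$, which is ambidextrous by Lemma~\ref{P:ambdim}\eqref{LI:amb1}), so the modified-dimension machinery applies to the entire category $\cat = \ideal_{\unit}$.

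For the easy direction, namely $\sdim(L(\lambda)) \neq 0 \Rightarrow \atyp(L(\lambda)) = \defect(\fg)$, I would invoke the Kac-Wakimoto supercharacter formula: when $\atyp(L(\lambda)) = k < \defect(\fg)$, one can express the supercharacter of $L(\lambda)$ as a quotient whose denominator carries at least $\defect(\fg) - k$ factors of the form $(1 - e^{-\alpha})$ indexed by mutually orthogonal isotropic positive roots $\alpha$ with $(\lambda+\rho,\alpha) \neq 0$. Specializing to superdimension these factors become zeros that are not cancelled by the numerator, forcing $\sdim(L(\lambda)) = 0$; this is essentially the argument of \cite[\S 3]{KW}.

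The main obstacle is the converse, $\atyp(L(\lambda)) = \defect(\fg) \Rightarrow \sdim(L(\lambda)) \neq 0$, which is the real content of the conjecture. My plan is to follow the route announced by Serganova \cite{Ser}: use equivalences between blocks of a fixed atypicality, together with Penkov's character formula for maximally atypical simples, to write $\operatorname{ch} L(\lambda)$ as a sign-coherent sum of typical-like characters whose specialization to the superdimension is a manifestly non-zero integer. The hardest technical point is this sign coherence: cancellations among the character summands are the standard trap in such arguments, and controlling them requires delicate combinatorial input (essentially the Duflo--Serganova functor and its compatibility with translation functors). A complementary, more categorical attack, better suited to the present setting, would be to prove directly that every maximally atypical simple $L(\lambda)$ admits an ambidextrous trace, and then use the uniqueness clause of Theorem~\ref{T:ambi!Trace} together with the fact that $\ideal_{L(\lambda)}$ contains a typical simple (on which $\md$ is given in closed form by \eqref{E:FormulaMD}) to certify non-vanishing; however, this route still bottoms out in the same combinatorial non-vanishing statement, which I expect to be the true difficulty.
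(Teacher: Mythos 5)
This statement is an open conjecture, not a theorem of the paper, and the paper offers no proof of it. In Section~\ref{SS:GKWconj} the authors state it as \cite[Conjecture 3.1]{KW}, survey partial results (the ``only if'' direction for contragradient Lie superalgebras due to Duflo--Serganova and, for $\glmn$, via support varieties; Serganova's announced proof for classical contragredient Lie superalgebras), and then recast it in their modified-dimension framework: they observe $\atyp(\C)=\defect(\fg)$, that $\C=\unit$ is ambidextrous with $\ideal_\C=\cat$ and $\md_\C=\sdim$, so the conjecture is precisely the $J=\unit$ instance of their generalized Kac--Wakimoto conjecture (Conjecture~\ref{C:genKWconj}). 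The paper then proves only fragments of the generalization for $\glmn$ (Theorems~\ref{T:TypeAgenKWconj} and~\ref{T:polys}), not the original conjecture.

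Your proposal correctly reproduces the paper's recasting verbatim: the identification $\md_\unit=\dim_\cat=\sdim$, $\atyp(\unit)=\defect(\fg)$, and ambidexterity of $\unit$ via Lemma~\ref{P:ambdim}. That part is fine and matches the paper exactly. However, what follows is a literature survey rather than a proof: the easy direction via the supercharacter denominator (which the paper attributes to Duflo--Serganova and to \cite{BKN2} rather than proving), and the hard direction via Serganova's announced sign-coherence argument or, alternatively, via a hoped-for ambidexterity statement that you yourself note ``bottoms out in the same combinatorial non-vanishing.'' Neither route is carried out, and you explicitly concede that the crux remains open. This is an honest assessment, and it is consistent with the paper, which likewise treats the statement as open. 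But a grader should be aware that no proof exists here --- neither in the paper nor in your proposal. If the intent was to supply a proof, the gap is the entire ``if'' direction; if the intent was to reproduce the paper's discussion, you have done so faithfully.
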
 The authors of \cite{KW} give numerous examples where the conjecture holds, including what they call tame representations (which include, for example, the polynomial representations of $\glmn$).  In \cite[Lemma 7.1]{DS} Duflo and Serganova prove for contragradiant Lie superalgebras that if the atypicality is strictly less than the defect, then the superdimension must be zero.  This direction of the conjecture was also verified for $\glmn$ by the authors of \cite{BKN2} using the support varieties they introduced in \cite{BKN1}.  Recently Serganova has announced a proof for the classical contragradiant Lie superalgebras using category equivalences, Zuckerman functors, and a character formula of Penkov \cite{Ser}.

The approach of the current paper allows us to recast and generalize the above conjecture.  Consider the trivial supermodule $\C$.  A direct computation verifies that $\atyp (\C ) = \defect (\fg )$.  By Example~\ref{Ex:unit}, $\C$ is ambidextrous, $\ideal_{\C}= \cat$, and $$\md_{\C}(M) = \sdim(M)$$ for any $\fg$-supermodule $M$.  Thus, one can rephrase the above conjecture as follows:  For any $M \in \ideal_{\C}$, $\atyp (M) = \atyp (\C )$ if and only if $\md_{\C}(M)\neq 0$.  Our new point of view naturally suggests the following generalized Kac-Wakimoto conjecture. 

\begin{conjecture}\label{C:genKWconj}  Let $\fg$ be a basic classical Lie superalgebra, let $J$ be a simple ambidextrous $\fg$-supermodule and let $L \in \ideal_{J}$ be a simple $\fg$-supermodule.  Then $$\atyp(L) = \atyp(J)$$ if and only if $\md_J(L) \neq 0$.
\end{conjecture}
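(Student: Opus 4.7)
The overall strategy is to translate the conjecture into a statement about the ideal lattice of $\cat$, then bring support-theoretic machinery to bear. Since every simple $\fg$-supermodule is absolutely irreducible over $\C$, Lemma~\ref{L:dim0} together with the hypothesis $L\in\ideal_J$ shows the conjecture is equivalent to the assertion
\begin{equation*}
\atyp(L)=\atyp(J)\iff J\in\ideal_L.
\end{equation*}
Thus the task reduces to showing, for a simple $L\in\ideal_J$, that $L$ generates $\ideal_J$ (i.e.\ $J\in\ideal_L$) precisely when $\atyp(L)=\atyp(J)$.

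The $\Leftarrow$ direction should follow from the monotonicity of atypicality under the ideal preorder: $M\in\ideal_N$ implies $\atyp(M)\le\atyp(N)$. This generalizes Theorem~\ref{Intro:divisible}(1) beyond $\glmn$ and is well-suited to a proof via the Duflo--Serganova associated variety $X_M\subseteq\{x\in\fg_\1:[x,x]=0\}$, since one has $X_{M\otimes X}\subseteq X_M$, and $X_{(-)}$ is stable under subquotients and direct summands, hence under retracts of $N\otimes X$. Using the Duflo--Serganova identification of $\atyp(M)$ with the rank of $X_M$ for simple $M$, one obtains the inequality; applied symmetrically to $L\in\ideal_J$ and $J\in\ideal_L$ it yields $\atyp(L)=\atyp(J)$.

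For the $\Rightarrow$ direction I would invoke the Duflo--Serganova desuspension functor $DS_x:\cat_\fg\to\cat_{\fg_x}$ at a self-commuting odd element $x\in\fg_\1$ of rank $k=\atyp(J)=\atyp(L)$. This functor is symmetric monoidal and reduces atypicality by $k$, so $DS_x(J)$ and $DS_x(L)$ decompose into typical simples of the smaller basic classical Lie superalgebra $\fg_x$. Assuming the typical case of the conjecture (established in Theorem~\ref{T:typicalcase} for types $A$ and $C$, and expected in general), any two typical simples generate the projective ideal, so $DS_x(J)$ and $DS_x(L)$ generate the same ideal in $\cat_{\fg_x}$. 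The remaining step is to transfer this equality back up along $DS_x$: given $DS_x(J)\in\ideal_{DS_x(L)}$, conclude $J\in\ideal_L$ in $\cat_\fg$, which combined with $L\in\ideal_J$ yields $\ideal_L=\ideal_J$ and hence $\md_J(L)\ne 0$ by Lemma~\ref{L:dim0}.

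The principal obstacle is precisely this upward lifting step: the functor $DS_x$ is not faithful -- it annihilates projective supermodules -- so a retract realization of $DS_x(J)$ inside some $DS_x(L)\otimes Y$ need not lift to a retract realization of $J$ inside $L\otimes X$. Overcoming this is essentially tantamount to classifying the thick tensor ideals of $\cat_\fg$, in the spirit of Hopkins--Neeman--Thomason; such a classification underlies Theorem~\ref{Intro:divisible}(4) in the polynomial case of $\glmn$, but in the general setting one would likely need Serganova's block-equivalence machinery, or a full support-variety classification along the lines of Boe--Kujawa--Nakano, to close the gap. A logically separate but related open input is the identification of which simple supermodules $J$ are ambidextrous in the first place, without which Theorem~\ref{T:ambi!Trace} cannot even be applied to form $\md_J$.
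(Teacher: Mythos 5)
You have correctly diagnosed the situation: the statement you were given is labelled a \emph{conjecture} in the paper, not a theorem, and the paper does not prove it.  The authors propose it in Section~\ref{SS:GKWconj} as a generalization of the Kac--Wakimoto conjecture and establish only partial results, so there is no ``paper's own proof'' to compare against.  What the paper does prove is: (a) one direction for $\glmn$ (Theorem~\ref{T:TypeAgenKWconj}), using the Boe--Kujawa--Nakano cohomological support variety $\V_\fe(-)$ and the identity $\atyp(L)=\dim\V_\fe(L)$ from \cite{BKN2}, together with Proposition~\ref{P:supp}; (b) both directions for atypicality-zero simples of type A/C (Theorem~\ref{T:typicalcase}); and (c) both directions for polynomial representations of $\glmn$ (Theorem~\ref{T:polys}), by an explicit crystal-graph/translation-functor argument producing a chain of rectangular ambidextrous modules $L_{\sigma(k)}$ interpolating between atypicalities.

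Your reduction via Lemma~\ref{L:dim0} to the equivalence $\atyp(L)=\atyp(J)\iff \ideal_L=\ideal_J$ is exactly the framing the authors use, and your argument for the easy direction by monotonicity of atypicality under the retract preorder is in substance the one in the paper (Theorem~\ref{T:TypeAgenKWconj}), modulo your choice of the Duflo--Serganova associated variety $X_M$ in place of $\V_\fe(M)$; the paper does note in Section~\ref{SS:supports} that the Duflo--Serganova varieties satisfy the analogous containment properties, so either support theory serves.  For the hard direction you have identified precisely the obstruction the authors stop short of: after desuspending by $DS_x$ to the typical situation, there is no general mechanism to lift a retract realization of $DS_x(J)$ inside $DS_x(L)\otimes Y$ back to $\cat_\fg$, because $DS_x$ kills projectives and more generally is far from conservative.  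This is not a gap you can patch with the tools in the paper; it is the open content of the conjecture, and the polynomial case in Theorem~\ref{T:polys} is handled by an entirely separate combinatorial argument that does not go through $DS_x$ at all.  You are also right that the hypothesis ``$J$ is ambidextrous'' is itself not verified in general; Theorem~\ref{T:rectangelsareambi} and Theorem~\ref{T:polys}(3) dispose of it only for polynomial $\glmn$-modules.  In short: your proposal is an accurate assessment of the state of the problem rather than a proof, and it honestly names the two genuine gaps (conservativity of $DS_x$ on ideals, and existence of ambidextrous traces) that keep this a conjecture.
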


The available evidence suggests if $J$ is a simple $\fg$-supermodule, then $J$ is ambidextrous and $\ideal_{J}$ contains all simple supermodules whose atypicality is not more than the atypicality of $J$ (cf.\ Theorem~\ref{T:polys}).  If so, the above conjecture can be rephrased in a less cumbersome fashion.

Let us briefly discuss the extreme cases.  When $J=\C$ then $J$ has the maximal possible atypicality, one is reduced to the ordinary Kac-Wakimoto conjecture, and the evidence for it also, of course, supports the generalization.  At the other extreme, assume $\fg$ is $\glmn$ or simple Lie superalgebra of Type A or C and $J$ in $\cat$ is a typical simple supermodule.  Then Theorem~\ref{T:typicalcase} confirms the conjecture here as well.  We consider the intermediate cases in the following sections.

\subsection{Support Varieties}\label{SS:supports}  In \cite{BKN1} cohomological support varieties for classical Lie superalgebras were introduced.  As they will be needed in what follows, we now discuss how they relate to the results of this paper and the generalized Kac-Wakimoto conjecture.   Given a classical Lie superalgebra, $\fg$, let $\fe \subseteq \fg$ denote the detecting subalgebra as given in \cite[Section 4]{BKN1}.   Then, given a $\fg$-supermodule $M$ in $\cat$, one can functorially define varieties $\V_{\fg}(M)$ and $\V_{\fe}(M)$.  Let $\fa$ denote $\fg$ or $\fe$.  By \cite[Section 4.6]{BKN2}, these varieties satisfy 
\begin{align}
\V_{\fa}(M \oplus N) &= \V_{\fa}(M) \cup \V_{\fa}(N),\label{E:varpropplus}\\
\V_{\fa}(M \otimes N) &\subseteq \V_{\fa}(N) \cap \V_{\fa}(N). \label{E:varproptimes}
\end{align}

The following result shows that these support varieties are compatible with the constructions introduced here.
\begin{proposition}\label{P:supp}  Let $\fg$ be a classical Lie superalgebra and let $\fa$ denote $\fg$ or the detecting subalgebra $\fe$.  Say $L,J \in \ob$ and $L \in \ideal_{J}$.
\begin{enumerate}
\item  Then $\V_{\fa}(L) \subseteq \V_{\fa}(J)$.
\item If $J$ is ambidextrous, then $\md_{J}(L)\neq 0$ implies $\V_{\fa}(L)=\V_{\fa}(J)$.
\end{enumerate}
\end{proposition}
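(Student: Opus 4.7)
The plan is to observe that both parts follow formally from the definition of $\ideal_J$, the basic properties \eqref{E:varpropplus} and \eqref{E:varproptimes} of support varieties, and Theorem~\ref{T:dim0} for the second part. The category $\cat$ of finite dimensional $\fg$-supermodules semisimple over $\fg_{\0}$ is abelian, so any split idempotent yields an honest direct sum decomposition, which is the only categorical input needed.

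For part (1), I would unpack what $L \in \ideal_J$ means: there exist $W \in \ob$ and morphisms $\alpha \colon L \to J \otimes W$, $\beta \colon J \otimes W \to L$ with $\beta \circ \alpha = \Id_L$. This exhibits $L$ as a direct summand of $J \otimes W$, so $J \otimes W \cong L \oplus L'$ for some $L' \in \ob$. Then chaining \eqref{E:varpropplus} and \eqref{E:varproptimes} gives
\[
\V_{\fa}(L) \;\subseteq\; \V_{\fa}(L) \cup \V_{\fa}(L') \;=\; \V_{\fa}(J \otimes W) \;\subseteq\; \V_{\fa}(J) \cap \V_{\fa}(W) \;\subseteq\; \V_{\fa}(J).
\]

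For part (2), the key extra input is Theorem~\ref{T:dim0}: since $J$ is ambidextrous and $\md_J(L) \neq 0$, that theorem yields $\ideal_L = \ideal_J$. In particular $J \in \ideal_L$, so by definition there exist $W' \in \ob$ and morphisms $\alpha' \colon J \to L \otimes W'$, $\beta' \colon L \otimes W' \to J$ with $\beta' \circ \alpha' = \Id_J$. Running the argument of part (1) verbatim with the roles of $J$ and $L$ reversed gives $\V_{\fa}(J) \subseteq \V_{\fa}(L)$, which combined with part (1) produces the asserted equality.

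There is no genuine obstacle to this proof; the statement really is a direct translation between the retract structure defining $\ideal_J$ and the tensor/summand behavior recorded in \eqref{E:varpropplus}--\eqref{E:varproptimes}. The mild subtlety worth flagging is that the implication $\md_J(L) \neq 0 \Rightarrow \ideal_L = \ideal_J$ requires $L \in \ideal_J$ as a hypothesis of Theorem~\ref{T:dim0}, but this is already granted in the statement of the proposition.
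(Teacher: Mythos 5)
Your proof is correct and follows essentially the same route as the paper: unpack $L\in\ideal_J$ as a direct summand of $J\otimes W$, chain \eqref{E:varpropplus} and \eqref{E:varproptimes} for part (1), and invoke Theorem~\ref{T:dim0} to get $\ideal_L=\ideal_J$ and hence $J\in\ideal_L$, then apply part (1) with the roles reversed. You simply spell out more explicitly the retract-to-summand translation that the paper states in one line.
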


\begin{proof}  Since $L$ being an element of $\ideal_{J}$ implies $L$ is a direct summand of $J\otimes X$ for some finite dimensional $\fg$-supermodule $X$, the first statement is an immediate consequence of \eqref{E:varpropplus} and \eqref{E:varproptimes}.  To prove the second statement, since $\md_{J}(L) \neq 0$ implies $\ideal_{L}=\ideal_{J}$ by Theorem~\ref{T:dim0} we have $J \in \ideal_{L}$. The desired equality then follows by part (1).
\end{proof}

Boe, Kujawa, and Nakano conjectured in \cite[Conjecture 7.2.1]{BKN1} that for a simple $\fg$-supermodule $L$ one has
\begin{equation}\label{E:atyp}
\atyp (L) = \dim \left(\V_{\fe}(L) \right),
\end{equation} where here $\dim$ denotes the dimension as an algebraic variety.  We remark that \eqref{E:atyp} is proven for $\glmn$ in \cite[Theorem 4.8.1]{BKN2}.  It is still open in general.

Observe that whenever \eqref{E:atyp} is valid, then Proposition~\ref{P:supp} immediately implies one direction of the generalized Kac-Wakimoto conjecture (cf.\ Theorem~\ref{T:TypeAgenKWconj} for the $\glmn$ case).  
Let us also remark that Duflo and Serganova \cite{DS} defined associated varieties for representations of Lie superalgebras which are different from the support varieties considered here.   The interested reader can verify that the varieties introduced there have properties analogous to \eqref{E:varpropplus} and \eqref{E:varproptimes} and are similarly compatible with the techniques of this paper.

\subsection{The Generalized Kac-Wakimoto Conjecture for $\glmn$}\label{SS:TypeA} \emph{For the remainder of Section~\ref{S:Liesuperalgebras} we assume  $\fg =\glmn$ and, as $\glmn \cong \mathfrak{gl}(n|m)$, that $m \leq n$.}  

Let us set the notation we will use when considering $\glmn$.  We take the
matrix realization of $\glmn$ as $(m+n) \times (m+n)$ matrices, the
$\Z_{2}$-grading is given by setting the $i,j$ matrix unit to be degree $\0$
if $1 \leq i,j \leq m$ or $m+1 \leq i,j \leq m+n$, and $\1$ otherwise, and the
bracket is given by the super commutator.  We choose $\ft$ to be the Cartan
subalgebra given by diagonal matrices and $\fb$ to the Borel subalgebra given
by upper triangular matrices.  Recall that if $1 \leq i \leq m+n$ and
$\varepsilon_{i} \in \ft^{*}$ is the linear functional which picks off the
$i$th diagonal entry of an element of $\ft$, then $\varepsilon_{1}, \dotsc,
\varepsilon_{m+n}$ provides a basis for $\ft^{*}$.  With respect to this
choice the roots are
\[
\{\varepsilon_{i}-\varepsilon_{j} \mid 1 \leq i,j \leq m+n, i \neq j  \},
\]
and the positive roots are 
\[
\{\varepsilon_{i}-\varepsilon_{j} \mid 1 \leq i < j \leq m+n, i\neq j \}.
\]  A root is even if $1 \leq i,j \leq m$ or $m+1 \leq i,j \leq m+n$, and otherwise it is odd. 

Given $\lambda \in \ft^{*}$ we often choose to instead write $(\lambda_{1}, \dotsc , \lambda_{m+n})$ where $\lambda = \sum_{i} \lambda_{i}\varepsilon_{i}$.  For $\glmn$ it is convenient (and harmless) to choose 
\[
\rho = (m-1, \dotsc, 1,0,0,-1,\dotsc ,-(n-1)). 
\] 
The bilinear form on $\ft^{*}$ is given by 
\[
(\varepsilon_{i}, \varepsilon_{j}) = \begin{cases} \delta_{i,j}, & 1 \leq i \leq m; \\
                                                    -\delta_{i,j}, & m+1 \leq i \leq m+n.
\end{cases}
\]

For $\glmn$ the equality given in \eqref{E:atyp} is proven in \cite[Theorem 4.8.1]{BKN2}.  Therefore we can prove one direction of the generalized Kac-Wakimoto conjecture for $\glmn$.  
\begin{theorem}\label{T:TypeAgenKWconj}  Let $\fg=\glmn$, let $L$ and $J$ be simple $\fg$-supermodules with $J$ ambidextrous and $L \in \ideal_J$.   Then $\atyp (L) \leq \atyp (J)$.  Furthermore, if $\md_J(L) \neq 0$, then $\atyp(L)=\atyp(J)$.
\end{theorem}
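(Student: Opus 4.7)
The proof should be essentially a direct application of the support variety machinery already developed in Proposition~\ref{P:supp}, combined with the key equality \eqref{E:atyp} which, as noted in the excerpt, has been established for $\glmn$ in \cite[Theorem 4.8.1]{BKN2}. The plan is to translate the statement about atypicality into a statement about dimensions of varieties, where the containment and equality of varieties from Proposition~\ref{P:supp} immediately yield the desired (in)equalities.

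More explicitly, first I would note that since $L \in \ideal_{J}$, Proposition~\ref{P:supp}(1) applied with $\fa = \fe$ yields the containment of detecting support varieties
\[
\V_{\fe}(L) \subseteq \V_{\fe}(J).
\]
Taking the dimension of both sides as algebraic varieties gives $\dim \V_{\fe}(L) \leq \dim \V_{\fe}(J)$. Now invoking \eqref{E:atyp} (which holds for $\glmn$ by \cite[Theorem 4.8.1]{BKN2}) for the simple supermodules $L$ and $J$, one has $\atyp(L) = \dim \V_{\fe}(L)$ and $\atyp(J) = \dim \V_{\fe}(J)$. Combining these yields $\atyp(L) \leq \atyp(J)$, proving the first assertion.

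For the second assertion, assume $\md_J(L) \neq 0$. Then Proposition~\ref{P:supp}(2), again applied with $\fa = \fe$, upgrades the above containment to an equality $\V_{\fe}(L) = \V_{\fe}(J)$. Taking dimensions and invoking \eqref{E:atyp} once more yields $\atyp(L) = \atyp(J)$, as desired.

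The main step is of course Proposition~\ref{P:supp}, which is already established, and the identification of atypicality with support variety dimension, which is an external input from \cite{BKN2}. There is no real obstacle here; the point of the theorem is precisely to advertise that the categorical/topological machinery of this paper, together with the support variety theory of \cite{BKN1, BKN2}, combines cleanly to yield one direction of Conjecture~\ref{C:genKWconj} for $\glmn$. The converse direction, which would require showing $\md_J(L) \neq 0$ whenever $\atyp(L) = \atyp(J)$, lies genuinely deeper and is treated separately in the remaining parts of Theorem~\ref{Intro:divisible}.
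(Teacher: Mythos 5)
Your proposal is essentially identical to the paper's own proof: both invoke Proposition~\ref{P:supp}(1) together with \cite[Theorem 4.8.1]{BKN2} (i.e.\ \eqref{E:atyp}) for the inequality, and Proposition~\ref{P:supp}(2) together with the same citation for the equality under the nonvanishing hypothesis. No gaps; the argument is correct as stated.
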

\begin{proof}  Since $L \in \ideal_{J}$, combining part (1) of Proposition~\ref{P:supp} and \cite[Theorem 4.8.1]{BKN2}, one obtains $\atyp (L) \leq \atyp (J)$.  By part (2) of Proposition~\ref{P:supp}, if $\md_{J}(L) \neq 0$ then $\V_{\fe}(L)=\V_{\fe}(J)$.  Again applying \cite[Theorem 4.8.1]{BKN2} we have  
\[
\atyp (L) = \dim  \left(\V_{\fe}(L) \right)= \dim \left(\V_{\fe}(J) \right) =\atyp (J),
\] as required.
\end{proof}

\subsection{Polynomial Representations of $\glmn$}\label{SS:TypeAambi}  The main obstacle to using the theory discussed in this paper on a given category is finding a sufficient number of ambidextrous objects.  In the case of $\glmn$, however, the polynomial representations give a ready source of ambidextrous objects.  Our main result of this section is Theorem~\ref{T:polys} where we show that all simple polynomial representations of $\glmn$ are ambidextrous and that the generalized Kac-Wakimoto conjecture holds for these representations.

Let $V$ denote natural representation of $\glmn$.  A simple $\glmn$-supermodule is said to be a \emph{polynomial representation of degree $d$} if it appears as a composition factor in $V^{\otimes d}$.  More generally, a supermodule is said to be \emph{polynomial} if all its composition factors are polynomial.  In the case of $\glmn$ this class of representations were first studied by Berele-Regeev \cite{BR} and Sergeev \cite{Se}.  A key result is that the full subcategory consisting of all polynomial representations of $\glmn$ is known to be a semisimple category; in particular, the tensor product of polynomial representations is always completely reducible.  

Let us recall the classification of the simple polynomial $\glmn$-supermodules. To do so requires recalling certain basic notions regarding partitions.  Recall that a \emph{partition} of $d$ is a weakly decreasing sequence of nonnegative integers which sum to $d$.  One can visually represent a partition $\gamma$ via its \emph{Young Diagram}: the diagram obtained by having $\gamma_{i}$ boxes (or nodes) in the $i$th row with all rows left justified.  We refer to a box in the $i$th row and $j$th column of the Young diagram as the $(i,j)$ node of the diagram.  If $\gamma$ is a partition, then we write $\gamma^{T}$ for the transpose partition (i.e.\ the partition obtained by reflecting the Young diagram of $\gamma$ across the line through the nodes along the $(i,i)$ diagonal).   Given two finite sequences of integers $\gamma_{1}$ and $\gamma_{2}$, we write $\gamma_{1}\# \gamma_{2}$ for the concatenation of the two sequences. A partition $\gamma$ is said to be a \emph{$(m,n)$ hook partition} if the $(m+1,n+1)$ node is \emph{not} a node of the Young diagram of $\gamma$.  

The simple polynomial $\glmn$-supermodules are parameterized by the set of $(m, n)$ hook partitions.  We write $L_{\gamma}$ for the simple supermodule labeled by the $(m,n)$ hook partition $\gamma$.  On the other hand, as discussed in Section~\ref{SS:Liesuperalgs}, the simple supermodules can be labelled by highest weight with respect to our choice of Cartan and Borel subalgebras.  We write $L(\lambda)$ for the simple supermodule labeled by $\lambda \in \ft^{*}$.  The translation between the two labelings is given as follows. 

   In terms of the highest weight parameterization of the simple supermodules, $L(\lambda)$ is a polynomial representation if and only if the $\lambda = (\lambda_{1}, \dotsc , \lambda_{m+n})$ is a sequence of integers such that $(\lambda_{1}, \dotsc , \lambda_{m})$ and $(\lambda_{m+1}, \dotsc , \lambda_{m+n})$ are partitions and the sequence 
\begin{equation}\label{E:polypartition}
\tau(\lambda) = (\lambda_{1}, \dotsc ,\lambda_{m}) \# (\lambda_{m+1}, \dotsc , \lambda_{m+n})^{T}
\end{equation}
is a partition.  Equivalently, the representation $L(\lambda)$ is polynomial if $(\lambda_{1}, \dotsc , \lambda_{m})$ and $(\lambda_{m+1}, \dotsc , \lambda_{m+n})$ are partitions and the number of nonzero parts of $(\lambda_{m+1}, \dotsc ,\lambda_{m+n})$ does not exceed $\lambda_{m}$.  It is straightforward to verify that the partitions given by \eqref{E:polypartition} are precisely the $(m,n)$ hook partitions.

One can easily use \eqref{E:polypartition} to go between this parameterization and the one by highest weight.  Namely, we have 
\[
L(\lambda) = L_{\tau(\lambda)}.
\]  Note that $\tau$ is an involution.

In the $(m,n)$ hook partition parameterization the character of $L_{\gamma}$ is given by the hook Schur function $HS_{\gamma}$.  By definition, if $\gamma$ is a partition which is not a $(m,n)$ hook function, then $HS_{\gamma} =0$.  Recalling that the tensor product of representations corresponds to the product of their characters, the following result of Remmel \cite[(1.10)]{Re} will be essential.

\begin{lemma}\label{L:productofhookschurfunctions}  Let $\gamma_{1}$ and $\gamma_{2}$ be two $(m,n)$ hook partitions.  Let $S_{\gamma_{1}}$ and $S_{\gamma_{2}}$ be the ordinary Schur functions labeled by the partitions $\gamma_{1}$ and $\gamma_{2}$, respectively.  If 
\[
S_{\gamma_{1}}S_{\gamma_{2}} = \sum_{\mu} g_{\gamma_{1}, \gamma_{2}}^{\mu} S_{\mu},
\] then 
\[
HS_{\gamma_{1}}HS_{\gamma_{2}} = \sum_{\mu} g_{\gamma_{1}, \gamma_{2}}^{\mu} HS_{\mu},
\] where the sum is over all partitions $\mu$.  That is, the product of hook Schur functions is given by the ordinary Littlewood-Richardson rule for $\mathfrak{gl}(m+n)$.
\end{lemma}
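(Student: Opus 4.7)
The plan is to deduce the identity by transporting it from the ring of ordinary symmetric functions via an explicit algebra homomorphism. Let $\Lambda$ denote the ring of symmetric functions in countably many variables and let $\Lambda(x;y)$ denote the ring of polynomials in $x_1,\dotsc,x_m,y_1,\dotsc,y_n$ which are separately symmetric in the $x_i$ and in the $y_j$. Because the power sums $p_k$ are algebraically independent generators of $\Lambda$, the assignment
\[
\phi(p_k) = p_k(x_1,\dotsc,x_m) + (-1)^{k-1} p_k(y_1,\dotsc,y_n)
\]
extends uniquely to an algebra homomorphism $\phi:\Lambda \to \Lambda(x;y)$. The heart of the argument is to prove that $\phi(S_\lambda) = HS_\lambda$ for every partition $\lambda$.

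Computing $\phi$ on the complete homogeneous functions via the generating function identities $\sum_k h_k t^k = \exp(\sum_k p_k t^k/k)$ and $\sum_k e_k t^k = \exp(\sum_k (-1)^{k-1} p_k t^k /k)$ yields $\phi(h_k) = \sum_{i+j=k} h_i(x) e_j(y)$, which is the standard expression for the supersymmetric complete homogeneous function $h_k(x/y)$. Applying $\phi$ to the Jacobi--Trudi determinantal identity $S_\lambda = \det(h_{\lambda_i - i + j})$ and recognizing the resulting determinant as the accepted determinantal definition of $HS_\lambda$ then gives the desired identification.

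With $\phi(S_\lambda) = HS_\lambda$ in hand, the lemma is immediate. Applying the ring homomorphism $\phi$ to the Littlewood--Richardson expansion $S_{\gamma_1}S_{\gamma_2} = \sum_\mu g^{\mu}_{\gamma_1,\gamma_2} S_\mu$ inside $\Lambda$ produces the corresponding identity inside $\Lambda(x;y)$. On the right-hand side, any partition $\mu$ which fails to be an $(m,n)$ hook partition contributes zero, because the specialization of $HS_\mu$ to $m$ positive and $n$ negative variables vanishes whenever the Young diagram of $\mu$ contains the $(m+1,n+1)$ node, a standard consequence of the Berele--Regev combinatorial description of the hook Schur functions.

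The main obstacle is verifying $\phi(S_\lambda) = HS_\lambda$, which amounts to keeping track of the transposes and signs so that the supersymmetric Jacobi--Trudi determinant matches the accepted definition of $HS_\lambda$. Should the direct determinantal comparison prove awkward, an alternative is to match $\phi(S_\lambda)$ against the Berele--Regev expansion $HS_\lambda = \sum_{\mu\subseteq\lambda} S_\mu(x) S_{(\lambda/\mu)^T}(y)$ by first decomposing $\phi$ through the coproduct of $\Lambda$ and then invoking the fact that the assignment $p_k \mapsto (-1)^{k-1} p_k(y)$ is the standard involution on symmetric functions sending $S_\nu(y)$ to $S_{\nu^T}(y)$.
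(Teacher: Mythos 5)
The paper itself offers no proof of this lemma; it is stated as a direct citation to Remmel \cite[(1.10)]{Re}, whose argument is combinatorial (via lattice words on $(m,n)$-semistandard tableaux). Your proposal is therefore a genuinely different route: rather than bijective combinatorics, you transport the ordinary Littlewood--Richardson expansion through the ring homomorphism $\phi:\Lambda\to\Lambda(x;y)$ determined on power sums by $p_k\mapsto p_k(x)+(-1)^{k-1}p_k(y)$. The argument is correct as outlined. The crucial identification $\phi(S_\lambda)=HS_\lambda$ follows cleanly from your alternative route: writing $\phi=(\mathrm{ev}_x\otimes(\mathrm{ev}_y\circ\omega))\circ\Delta$ where $\Delta$ is the coproduct of $\Lambda$ and $\omega$ is the standard involution, then using $\Delta(S_\lambda)=\sum_\mu S_\mu\otimes S_{\lambda/\mu}$ and $\omega(S_{\lambda/\mu})=S_{(\lambda/\mu)^T}$ produces precisely the Berele--Regev expansion $\sum_{\mu\subseteq\lambda}S_\mu(x)\,S_{(\lambda/\mu)^T}(y)$. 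Your vanishing argument for non-hook $\mu$ is also sound: if the $(m+1,n+1)$ box lies in $\mu$, then for any $\nu\subseteq\mu$ fitting in $m$ rows the skew shape $\mu/\nu$ contains a row of length $\geq n+1$, so one of the two factors $S_\nu(x_1,\dotsc,x_m)$ or $S_{(\mu/\nu)^T}(y_1,\dotsc,y_n)$ is zero. What the algebraic approach buys is that the entire Littlewood--Richardson statement falls out of ring-homomorphism functoriality, at the cost of leaning on the classical identification of the combinatorial hook Schur function with the supersymmetric Schur function; what Remmel's combinatorial proof buys is a self-contained bijection that stays entirely inside the tableau model, at the cost of a more intricate argument.
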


We call a partition $\gamma$ a \emph{rectangle} if the nonzero parts of $\gamma$ are all of equal size.  Going back to Kostant, it is known that the product $S_{\gamma}S_{\gamma}$ is multiplicity free (i.e.\ $g_{\gamma,\gamma}^{\mu} \leq 1$ for all partitions $\mu$) for any rectangular partition $\gamma$ (cf.\  \cite[Theorem 3.1]{St}).  Combining this with the above lemma yields the following key result.

\begin{theorem}\label{T:rectangelsareambi}  If $\gamma$ is a $(m,n)$ hook
  partition which is a rectangle then $L_\gamma$ is ambidextrous.  
\end{theorem}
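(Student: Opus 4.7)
The plan is to verify the hypothesis of Lemma~\ref{P:ambdim}\eqref{LI:amb1}, namely that $\End_\cat(L_\gamma \otimes L_\gamma)$ is commutative. Once this is established, any linear map on $\End_\cat(L_\gamma)$ is automatically an ambidextrous trace. Since $L_\gamma$ is simple, it is absolutely irreducible with $\End_\cat(L_\gamma) = \C\,\Id$, so the canonical map $\langle\;\rangle : \End_\cat(L_\gamma) \to \C$ is non-zero and hence an ambidextrous trace, making $L_\gamma$ ambidextrous in the sense of Definition~\ref{D:ambidef}.

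To show $\End_\cat(L_\gamma \otimes L_\gamma)$ is commutative, I will argue that $L_\gamma \otimes L_\gamma$ decomposes as a multiplicity-free direct sum of simple $\fg$-supermodules. First, since $L_\gamma$ is polynomial, so is $L_\gamma \otimes L_\gamma$; and as recalled in Section~\ref{SS:TypeAambi} the full subcategory of polynomial representations is semisimple, so $L_\gamma \otimes L_\gamma$ is a direct sum of simple polynomial supermodules $L_\mu$ indexed by $(m,n)$ hook partitions $\mu$. By Lemma~\ref{L:productofhookschurfunctions} the multiplicities in this decomposition are exactly the Littlewood--Richardson coefficients $g_{\gamma,\gamma}^\mu$ appearing in the ordinary Schur function product $S_\gamma S_\gamma = \sum_\mu g_{\gamma,\gamma}^\mu S_\mu$.

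Now invoke the classical result of Kostant (cited in the text via \cite[Theorem 3.1]{St}) that $S_\gamma \cdot S_\gamma$ is multiplicity-free whenever $\gamma$ is a rectangular partition, i.e. $g_{\gamma,\gamma}^\mu \in \{0,1\}$ for every $\mu$. Combined with Lemma~\ref{L:productofhookschurfunctions} this yields a multiplicity-free decomposition $L_\gamma \otimes L_\gamma \cong \bigoplus_\mu L_\mu$, so by Schur's lemma $\End_\cat(L_\gamma \otimes L_\gamma)$ is a product of copies of $\C$, one per summand, and in particular commutative. Thus $c_{L_\gamma, L_\gamma}$ lies in the center of $\End_\cat(L_\gamma \otimes L_\gamma)$ and commutes with every endomorphism, so Lemma~\ref{P:ambdim}\eqref{LI:amb1} applies and $L_\gamma$ is ambidextrous.

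There is no real obstacle here beyond assembling the pieces; the content of the theorem is really the conjunction of Remmel's translation (Lemma~\ref{L:productofhookschurfunctions}) between hook Schur function products and ordinary Schur function products with Kostant's multiplicity-freeness for squares of rectangular Schur functions. If one wished to enlarge the class of ambidextrous polynomial representations beyond rectangles, the bottleneck would be the failure of multiplicity-freeness of $L_\gamma\otimes L_\gamma$, and one would need to argue directly that $c_{L_\gamma,L_\gamma}$ is central in $\End_\cat(L_\gamma\otimes L_\gamma)$ (using, for example, that $c^2 = \Id$ together with an analysis of the $\pm 1$ eigenspaces on each isotypic component as in Remark~\ref{R:ambi}(2)).
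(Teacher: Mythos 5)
Your proposal is correct and follows essentially the same route as the paper: semisimplicity of polynomial representations, Remmel's Lemma~\ref{L:productofhookschurfunctions} to reduce to ordinary Littlewood--Richardson coefficients, Kostant/Stembridge multiplicity-freeness for squares of rectangular Schur functions, and then Lemma~\ref{P:ambdim}\eqref{LI:amb1} applied to the commutative ring $\End_\cat(L_\gamma\otimes L_\gamma)$.
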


\begin{proof}  Since $L_{\gamma}$ is a polynomial representation, $L_{\gamma} \otimes L_{\gamma}$ is semisimple and its composition factors can be determined by considering the product 
\[
HS_{\gamma}HS_{\gamma} = \sum_{\mu} g_{\gamma,\gamma}^{\mu} HS_{\mu}.
\]  However, by Lemma~\ref{L:productofhookschurfunctions} the coefficients $g_{\gamma,\gamma}^{\mu}$ are given by the ordinary Littlewood-Richardson rule.  Since $\gamma$ is a rectangle this product is multiplicity free by \cite[Theorem 3.1]{St}.  This implies that $\End_{\cat}(L_{\gamma}\otimes L_{\gamma})$ is a commutative ring and so by part (1) of Lemma~\ref{P:ambdim} any linear map provides an ambidextrous trace on $\End_{\cat}(L_{\gamma})$.
\end{proof}

We remark that an immediate consequence of the previous theorem is that there are simple $\glmn$-supermodules of every possible degree of atypicality which are ambidextrous (e.g.\ the representations $L_{\sigma(k)}$ defined just before Theorem~\ref{T:polys}).

Before proving the main result of this section, we first prove a preparatory lemma.  In the following proof we use results on translation functors for $\glmn$ which can be found in \cite{brundan,kujawa}.  First used in this setting by Serganova, these translation functors are given by tensoring with either the natural representation or its dual and then projecting onto a block.  Using these translation functors one can create a colored, directed graph as follows.  The nodes are labelled by the simple $\glmn$-supermodules.  A directed edge goes from $L(\lambda)$ to $L(\mu)$ if $L(\mu)$ appears in the socle of $L(\lambda) \otimes V$.  The edge is colored by an integer determined by the blocks which contain $L(\lambda)$ and $L(\mu)$. 

In \cite[Theorem 2.5]{kujawa} it was shown that this directed graph is described by the combinatorics of a crystal graph (in the sense of Kashiwara) associated to a certain representation of the Kac-Moody algebra $\mathfrak{gl}_{\infty}(\C )$.  We refer the reader to \cite[Theorem 4.36, Section 3.d]{brundan} for both the statement of the necessary result and an explicit description of the crystal graph. 

If $\lambda$ and $\mu$ are two tuples of nonnegative integers, then we write $\lambda \subseteq \mu$ if $\lambda_{i} \leq \mu_{i}$ for all $i >0$.

\begin{proposition}\label{P:polyideals}  Let $L_{\lambda}$ and $L_{\mu}$ be polynomial representations of $\glmn$ and assume that $\lambda \subseteq \mu$ and $\sum_{i} (\mu_{i}-\lambda_{i}) =1$; that is, that the diagram for $\mu$ can be obtained from the diagram of $\lambda$ by adding a single node.  Then,
\[
\ideal_{L_{\lambda}} \subseteq \ideal_{L_{\mu}}.
\]

Furthermore, if $L_{\lambda}$ and $L_{\mu}$ have the same degree of atypicality, then, 
\[
\ideal_{L_{\lambda}} = \ideal_{L_{\mu}}.
\]
\end{proposition}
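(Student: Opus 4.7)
The plan is to combine the semisimplicity of the full subcategory of polynomial $\glmn$-supermodules (a classical result of Berele--Regev and Sergeev) with the Pieri rule for hook Schur functions supplied by Lemma~\ref{L:productofhookschurfunctions}. Since $L_\lambda$ and the natural supermodule $V$ are both polynomial, the product $L_\lambda\otimes V$ is polynomial of degree $|\lambda|+1$ and therefore semisimple. Lemma~\ref{L:productofhookschurfunctions} identifies its decomposition with the one given by ordinary Pieri on Schur functions (with any non-hook terms dropped), producing
\[
L_\lambda\otimes V \;\cong\; \bigoplus_{\nu} L_\nu,
\]
where $\nu$ ranges over all $(m,n)$-hook partitions obtained from $\lambda$ by adding a single node. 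By hypothesis $\mu$ is exactly such a $\nu$, so $L_\mu$ is a direct summand of $L_\lambda\otimes V$, placing $L_\mu\in\ideal_{L_\lambda}$. An application of Lemma~\ref{L:Ideal}(1) then delivers the first ideal containment of the proposition.

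For the equality statement under the additional hypothesis $\atyp(L_\lambda)=\atyp(L_\mu)$, the remaining (opposite) containment amounts to realizing $L_\lambda$ as a retract of $L_\mu\otimes W$ for some $W$. Since $|\lambda|<|\mu|$, no polynomial $W$ can work, and the natural candidate is $W=V^*$. My plan is to invoke the translation-functor/crystal-graph machinery of \cite{brundan,kujawa} described just before the statement: the unique added node $\mu\setminus\lambda$ carries some residue $i$, and the projection of $L_\mu\otimes V^*$ onto the block of $L_\lambda$ realizes a Kashiwara operator $\tilde e_i$ on the crystal of polynomial simples. Under the atypicality-preserving hypothesis this projection contains $L_\lambda$ as a genuine direct summand, whereas if atypicality strictly drops the contribution of $L_\lambda$ is only a proper composition factor. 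Splitting $L_\lambda$ off then yields the reverse containment and hence equality.

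The principal obstacle will be the splitting claim in the second step. Adjunction always produces nonzero morphisms $L_\lambda\to L_\mu\otimes V^*$ and $L_\mu\otimes V^*\to L_\lambda$, but one needs their composition to be a nonzero scalar multiple of $\Id_{L_\lambda}$, equivalently that the relevant block idempotent is not annihilated. The equal-atypicality hypothesis is essential here: taking $\lambda=\emptyset$ and $\mu=(1)$ inside $\mathfrak{gl}(n|n)$, where atypicality strictly drops and $\sdim(V)=0$ prevents $\mathbb{C}$ from splitting off $V\otimes V^*$, already illustrates that the reverse containment can fail without the hypothesis. The technical heart of the argument is therefore extracting from the explicit crystal-graph description in \cite[Section 3]{brundan} the precise criterion for $\tilde e_i$ to act invertibly on a given polynomial simple, and reading this back as a splitting of the relevant morphism in $\cat$.
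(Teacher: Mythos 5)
Your first step is exactly the paper's: the Pieri rule for hook Schur functions (Lemma~\ref{L:productofhookschurfunctions} together with the classical fact that $g^{\gamma}_{\lambda,(1)}=1$ precisely when $\gamma$ is obtained from $\lambda$ by adding one node) plus semisimplicity of the polynomial subcategory shows $L_\mu$ is a direct summand of $L_\lambda\otimes V$. You are also correct that the inclusion this gives is $\ideal_{L_\mu}\subseteq\ideal_{L_\lambda}$, not the containment as displayed in the proposition; the paper's own proof in fact establishes $\ideal_{L_\mu}\subseteq\ideal_{L_\lambda}$, so the statement has the inclusion reversed (your $\mathfrak{gl}(n|n)$ example with $\lambda=\emptyset$, $\mu=(1)$ already rules out the literal version, since $\ideal_{\C}=\cat$ while $\ideal_{V}$ is proper).

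For the reverse containment under equal atypicality, you and the paper reach for the same machinery — Brundan--Kujawa crystal graphs and translation functors — but your proposal explicitly stops at naming the hard step rather than carrying it out. The claim that ``under the atypicality-preserving hypothesis this projection contains $L_\lambda$ as a genuine direct summand'' is precisely what needs proof, and the paper supplies the missing observation: the edge $L_\lambda\to L_\mu$ in the crystal carries some color $a$; $a$-strings in this crystal have length at most two; the unique length-two type is case~(4*) of \cite[Section 3.d]{brundan}, and there the final edge strictly drops atypicality. Under your hypothesis the string is therefore of length one, and \cite[Theorem 4.36(ii)]{brundan} then gives $E_a L_\mu\cong L_\lambda$, exhibiting $L_\lambda$ as a retract of $L_\mu\otimes V^*$. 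Without this length-of-string argument the assertion about when $\tilde e_a$ produces a genuine splitting is just restated, not established, so that is the concrete gap to fill.
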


\begin{proof}  Let $V$ be the natural representation for $\glmn$ which by definition has character equal to $HS_{(1)}$. Thus to compute the composition factors of $L_{\lambda} \otimes V$ it suffices to compute 
\[
HS_{\lambda}HS_{(1)} = \sum_{\gamma} g_{\lambda,(1)}^{\gamma}HS_{\gamma}.
\]  By Lemma~\ref{L:productofhookschurfunctions} the coefficents $g_{\lambda, (1)}^{\gamma}$ are given by the ordinary Littlewood-Richardson rule.  However it is well known in this case that $g_{\lambda, (1)}^{\gamma}$ is zero or one.  It is one if and only if $\lambda \subset \gamma$ and $\gamma$ is obtained from $\lambda$ by the addition of a single node  \cite[I.3, Exercise~11]{macdonald}.  By the assumptions on $\lambda$ and $\mu$, it follows that $g_{\lambda, (1)}^{\mu}=1$.  Since $L_\lambda\otimes V$ is a polynomial representation, it is completely reducible.  Taken together this implies $L_{\mu}$ is a direct summand of $L_{\lambda} \otimes V$ and, hence, $\ideal_{L_{\mu}} \subseteq \ideal_{L_{\lambda}}$.

Furthermore, in the crystal graph language of \cite{brundan, kujawa} there is a directed edge from $L_{\lambda}$ to $L_{\mu}$ colored with some integer $a \in \Z$ and $L_{\mu}=F_{a}L_{\lambda}$.  Since $F_{a}L_\lambda$ is simple, by \cite[Theorem 4.36(i)]{brundan} $L_{\mu}$ is at the end of this directed string of color $a$.  However, in this crystal graph the $a$-strings are of length at most two.  See \cite[Section 3.d]{brundan} for a case by case description of the possible $a$-strings which can occur.  The key observation is that there is only one $a$-string of length two, namely case (4*) in \emph{loc. cit.} However, this case is excluded here as the second to last and last nodes of this $a$-string have differing atypicality and, by assumption, $L_{\lambda}$ and $L_{\mu}$ have the same atypicality.
 Therefore, $L_{\lambda}$ and $L_{\mu}$ form an $a$-string of length precisely one.  By \cite[Theorem 4.36(ii)]{brundan} one has that $E_{a}L_{\mu} \cong  L_{\lambda}$.  That is, that $L_{\lambda}$ is a direct summand of $L_{\mu}\otimes V^{*}$.  Hence, $\ideal_{L_{\lambda}} \subseteq \ideal_{L_{\mu}}$.  This proves the desired equality.

\end{proof}

Now for each $k=0, \dotsc ,m$ let $L_{\sigma(k)}$ denote the polynomial representation labelled by the $(m,n)$ hook partition 
\[
\sigma(k)=(n-k, \dotsc, n-k, 0, \dotsc ,0),
\] where there are precisely $m-k$ entries equal to $n-k$.  Since $\sigma(k)$ is a rectangle, by Theorem~\ref{T:rectangelsareambi} $L_{\sigma(k)}$ is an ambidextrous object for all $k=0, \dotsc , m$.  Also note that since $\sigma(k)_{j}=0$ for $j=m+1, \dotsc m+n$, that one has 
\begin{equation*}
L(\sigma(k)) = L_{\sigma(k)}.
\end{equation*}
By direct calculation, $L_{\sigma(k)}$ has atypicality $k$.  A simple inductive argument using Proposition~\ref{P:polyideals} shows that 
\begin{equation}\label{E:rectanglecontainments}
\ideal_{L_{\sigma(l)}} \subseteq \ideal_{L_{\sigma(k)}}
\end{equation} whenever $l \leq k$.

\begin{theorem}\label{T:polys}  Let $L_{\lambda}$ be the simple polynomial representation of $\glmn$ labelled by the $(m,n)$ hook partion $\lambda$.  Assume that $L_{\lambda}$ has atypicality $k$.  Then the following statements hold true.
\begin{enumerate}
\item One has $\ideal_{L_{\lambda}} = \ideal_{L_{\sigma(k)}}$.
\item One has $\md_{L_{\sigma(k)}}(L_{\lambda}) \neq 0$
\item All polynomial representations of $\glmn$ are ambidextrous.
\item If $L_{\nu}$ is a polynomial representation of $\glmn$ of atypicality less than or equal to the atypicality of $L_{\lambda}$, then $L_{\nu} \in \ideal _{L_{\lambda}}$ and $\md_{L_{\lambda}}(L_{\nu}) \neq 0$ if and only if the atypicality of $L_{\lambda}$ equals the atypicality of $L_{\nu}$.  That is, the Generalized Kac-Wakimoto Conjecture holds for polynomial representations of $\glmn$.
\end{enumerate}
\end{theorem}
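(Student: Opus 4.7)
The overall plan is to deduce the four parts in order, with (1) being the combinatorial heart of the proof and (2)--(4) following formally from (1) together with the theory developed in Sections~\ref{S:generalizedtraces}--\ref{SS:TypeA}. For (1), the goal is to exhibit a chain
\[
\lambda = \mu^{(0)}, \mu^{(1)}, \dotsc, \mu^{(r)} = \sigma(k)
\]
of $(m,n)$-hook partitions, all of atypicality $k$, with consecutive pairs differing by a single node. Iterated application of the equality part of Proposition~\ref{P:polyideals} then collapses the chain to the desired equality $\ideal_{L_\lambda} = \ideal_{L_{\sigma(k)}}$. To produce such a chain, I would use the crystal graph description of translation functors for polynomial representations given in \cite{brundan, kujawa}: the edges within an $a$-string of length one are precisely the single-node moves that preserve atypicality. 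Connectivity of the atypicality-$k$ stratum among $(m,n)$-hook partitions under such moves can be established by first stripping columns of length $m$ and rows of length $n$ (which contribute only to the typical part of $\lambda$) and then reshaping the remaining hook into the rectangle $\sigma(k)$. Verifying this connectivity is the main technical obstacle; the rest of the theorem is formal.

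For (2), since $\sigma(k)$ is a rectangle, $L_{\sigma(k)}$ is ambidextrous by Theorem~\ref{T:rectangelsareambi}. Part (1) yields $L_\lambda \in \ideal_{L_{\sigma(k)}}$ and $\ideal_{L_\lambda} = \ideal_{L_{\sigma(k)}}$, so Lemma~\ref{L:dim0} (applicable because $L_\lambda$ is absolutely simple) gives $\md_{L_{\sigma(k)}}(L_\lambda) \neq 0$. For (3), Theorem~\ref{T:ambi!Trace} produces a trace $\{\mt_V\}_{V \in \ideal_{L_{\sigma(k)}}}$ from the canonical trace on $L_{\sigma(k)}$, and Theorem~\ref{T:inducedtrace} says $\mt_{L_\lambda}$ is ambidextrous. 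By (2), $\mt_{L_\lambda}(\Id_{L_\lambda}) \neq 0$, and since $L_\lambda$ is simple with $\End_\cat(L_\lambda)= K$, the map $\mt_{L_\lambda}$ is a non-zero scalar multiple of the canonical map $\End_\cat(L_\lambda) \to K$. Hence the canonical map itself is a non-zero ambidextrous trace, so $L_\lambda$ is ambidextrous.

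For (4), let $L_\nu$ have atypicality $j \leq k$. Applying (1) to both $L_\nu$ and $L_\lambda$ gives $\ideal_{L_\nu} = \ideal_{L_{\sigma(j)}}$ and $\ideal_{L_\lambda} = \ideal_{L_{\sigma(k)}}$. The containment \eqref{E:rectanglecontainments} then yields $L_\nu \in \ideal_{L_{\sigma(j)}} \subseteq \ideal_{L_{\sigma(k)}} = \ideal_{L_\lambda}$. For the equivalence: part (3) ensures $L_\lambda$ is ambidextrous, so Lemma~\ref{L:dim0} gives $\md_{L_\lambda}(L_\nu) \neq 0$ if and only if $\ideal_{L_\nu} = \ideal_{L_\lambda}$. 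The reverse implication is then immediate from (1) once $j = k$, and the forward implication follows from Theorem~\ref{T:TypeAgenKWconj}: if $\md_{L_\lambda}(L_\nu) \neq 0$, then $\atyp(L_\nu) = \atyp(L_\lambda)$, forcing $j = k$.
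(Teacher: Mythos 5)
Your overall architecture matches the paper exactly: construct a chain of $(m,n)$-hook partitions of constant atypicality $k$ from $\sigma(k)$ to $\lambda$ differing by single nodes, iterate the equality part of Proposition~\ref{P:polyideals}, and then deduce (2)--(4) formally from (1) together with Theorem~\ref{T:rectangelsareambi}, Lemma~\ref{L:dim0}, Theorems~\ref{T:ambi!Trace} and~\ref{T:inducedtrace}, the containments \eqref{E:rectanglecontainments}, and Theorem~\ref{T:TypeAgenKWconj}. Your derivations of (2), (3), and (4) from (1) are correct and are the same as the paper's.

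The genuine gap is in part (1), which you yourself flag as ``the main technical obstacle.'' The crystal-graph machinery of Brundan and Kujawa is already packaged into Proposition~\ref{P:polyideals}; it tells you that a single-node move between hook partitions of equal atypicality gives an equality of ideals, but it does \emph{not} by itself give connectivity of the atypicality-$k$ stratum. That connectivity statement is the combinatorial heart of the proof, and you do not establish it. Your proposed picture (``strip columns of length $m$ and rows of length $n$, then reshape'') is too coarse in two respects. First, removing an entire column or row is a multi-node operation; one must check that every intermediate partition obtained by removing one node at a time is still an $(m,n)$-hook partition \emph{and} has atypicality exactly $k$, since the hypothesis of Proposition~\ref{P:polyideals} requires equal atypicality for the two-sided containment. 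Second, ``contributes only to the typical part'' is not a standing fact one can invoke; it is exactly what needs to be checked node-by-node via the pairing $(\mu+\rho, \varepsilon_i - \varepsilon_j)$. The paper does this by first proving $\sigma(k)\subseteq\lambda$ (using that $\mu_{m-k}\geq n-k$, which follows from the atypicality count and \eqref{E:mu+rho}), and then building the chain in three explicit stages: first growing rows $1,\dotsc,m-k$, then rows $m-k+1,\dotsc,m$, then rows $m+1,\dotsc,m+t$, checking at each stage via \eqref{E:mu+rho} and \eqref{E:mu+rho2}-type inequalities that the relevant pairings stay nonzero so atypicality is unchanged. Without an argument of this kind, your proof of (1) --- and hence of the whole theorem --- is incomplete.
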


\begin{proof} Let $\lambda = (\lambda_{1}, \lambda_{2}, \dotsc) $ be a $(m,n)$ hook partition, let $\mu = \tau(\lambda)$ be the highest weight of $L_{\lambda}$, and say $\mu$ has atypicality $k$.   As we will be interested in keeping close track of atypicality and as this is computed using the highest weight of a representation, we will mainly label simple supermodules with their highest weight.   

 Before proving the theorem we first obtain some information about which entries of $\mu$ contribute to its atypicality.  Let 
\[
t=\lambda_{m+1}.
\]
  We first observe that if $1 \leq i \leq m$, $m+1 \leq j \leq m+n$, and $(\mu + \rho, \varepsilon_{i}-\varepsilon_{j})=0$ then necessarily $j > m+t$.  For if not, then $m+1 \leq j \leq m+t$ and using that the entries of $(\mu_{1}, \dotsc , \mu_{m})$ are weakly decreasing and the fact that $j \leq m+t$ where $t=\lambda_{m+1} \leq \lambda_{m}=\mu_{m}$, we obtain the following contradiction:  
\begin{align}\label{E:mu+rho2}
0 &= (\mu + \rho, \varepsilon_{i}-\varepsilon_{j}) \notag\\
  &= \mu_{i} + m-i + \mu_{j} + m+1 -j \notag \\
   & \geq \mu_{m} + m-i +\mu_{j} + m +1 - j \notag \\
   & \geq \mu_{m} + \mu_{j}+m+1-j \notag \\
   & \geq \mu_{j}+1 \notag \\
   & > 0.
\end{align} 

On the other hand, by \eqref{E:polypartition} we have $\mu_{j}=0$ for $j>m+t$.  Therefore
\begin{equation}\label{E:mu+rho}
(\mu+\rho, \varepsilon_{m+t+l})=t+l-1
\end{equation}
for all $1 \leq l \leq n-t$.   That is, as $l$ runs between $1$ and $n-t$ every integer between $t$ and $n-1$ appears exactly once.  
We also observe that for $1 \leq i \leq m$ we have 
\begin{equation*}
(\mu+\rho,\varepsilon_{i}) \geq (\mu+\rho,\varepsilon_{m})=\mu_m=\lambda_m\geq \lambda_{m+1} =t.
\end{equation*} 

Recall that $k$ was the atypicality of $L(\mu)$.  The previous paragraph implies that  $(\mu + \rho, \varepsilon_{i}-\varepsilon_{j})=0$ for all $i=m-k+1, \dotsc ,m$ and exactly $k$ indices $j \in \{m+t, \dotsc m+n \}$.

We make one other combinatorial observation.  Since the atypicality of $L(\mu)$ is $k$ it follows from \eqref{E:mu+rho}  that $(\mu + \rho, \varepsilon_{m-k})\geq n$.  That is, $\mu_{m-k} \geq n-k$.  Then for $i=1, \dotsc , m-k$ the inequality $\mu_i\geq \mu_{m-k}$ implies $\mu_{i} \geq n - k$.  In particular, since  $\mu_{i}=\lambda_{i}$ for $i=1, \dotsc , m-k$, one has $\sigma(k) \subseteq \lambda$.

We are now prepared to prove statement (1).  We continue to work with the labelling of simple modules by highest weight.  However, observe that if $L(\alpha)$ and $L(\beta)$ are two polynomial representations, then $\alpha \subseteq \beta$ and $\beta$ is obtained from $\alpha$ by the addition of a single node if and only if $\tau(\alpha) \subseteq \tau(\beta)$ and $\tau(\beta)$ is obtained from $\tau(\alpha)$ by the addition of a single node.  Therefore Proposition~\ref{P:polyideals} applies whenever the highest weights of two polynomial representations differ by a single node.

We now show that one can construct a sequence of highest weights all of atypicality $k$,
\[
\sigma(k) = \gamma(1) \subseteq \gamma(2) \subseteq \dotsb \subseteq \gamma(l) = \mu, 
\] so that $\gamma(s)$ is obtained from $\gamma(s-1)$ by the addition of a single node and so that all are the highest weights of polynomial representations. 
The existence of such a sequence - along with the observation in the previous paragraph - immediately implies statement (1) of the theorem via an inductive argument using Proposition~\ref{P:polyideals}.

We do this by proceeding in three stages.  In the first stage, we note that by our earlier observations on the location of atypicality one has that $(\sigma(k) + \rho, \varepsilon_{i})\geq n$ and $(\mu + \rho, \varepsilon_{i})\geq n$ for $i=1, \dotsc , m-k$. 
Consequently we can successively add nodes to the first $m-k$ rows of $\sigma(k)$ in such a way that adding each node yields a partition $\gamma(s)$ with $(\gamma(s) + \rho, \varepsilon_{i}) \geq n$ for all $i=1,...,m-k$.  This implies that $(\gamma(s) + \rho, \varepsilon_{i}-\varepsilon_j)\neq 0$ for all $i=1,...,m-k$ and $j=m+1,\dotsc ,n+m$.  Therefore, the rows $1,...,m-k$ of $\gamma(s)$ are not involved in the calculation of atypicality.  Then since $\gamma(s)_i=\sigma(k)_i =0$ for all $i>m-k$ it follows that $\gamma(s)$ has atypicality $k$.
We continue adding nodes to the first $m-k$ rows until we reach $\gamma(s_{1})$ where $\gamma(s_{1})_{i}=\mu_{i}$ for $i=1, \dotsc m-k$ and $\gamma(s_{1})_{i}=0$ for $i > m-k$.  

In the second stage we start with $\gamma(s_{1})$.  We first observe that if $\alpha$ is such that $\gamma(s_{1}) \subseteq \alpha \subseteq \mu$ and $\alpha_{j}=0$ for $j=m+1, \dotsc, m+n$, then (i) $(\alpha + \rho, \varepsilon_{j}) = 0, -1, \dotsc , -(n-~1)$ for $j=m+1, \dotsc , m+n$, and (ii) we have the inequality $(\alpha + \rho, \varepsilon_{i})\leq (\mu + \rho, \varepsilon_{i})\leq n-1$ for $i=m-k+1,...,m$.  Together these imply we can successively add nodes to rows $m-k+1, \dotsc , m$ of $\gamma(s_{1})$ in such a way that adding each node yields another partition with atypicality $k$.  We continue adding nodes until we reach $\gamma(s_{2})$ where now $\gamma(s_{2})_{i}=\mu_{i}$ for $i=1, \dotsc ,  m$.  

The key observation to make at this point is that if $\alpha$ is such that $\gamma(s_{2}) \subseteq \alpha \subseteq \mu$ with $\alpha_{i} = \mu_{i}$ for $i=1, \dotsc , m$ and $t=\lambda_{m+1}$, then as was explained above using inequality \eqref{E:mu+rho2} (the argument applies equally well by replacing $\mu$ with $\alpha$), $(\alpha + \rho, \varepsilon_{i}-\varepsilon_{j}) > 0$ for all $i=1, \dotsc ,m$ and $j=m+1, \dotsc , m +t$. That is, rows $m+1, \dotsc , m+t$ of $\gamma(s_{2})$ and $\mu$ are not involved in the calculation of the atypicality of $\alpha$.  From this we see we can successively add nodes to these rows $\gamma(s_{2})$ in such a way that adding each node yields a new partition with atypicality $k$.  We continuing adding nodes until we reach $\mu$.  We now have the sequence of weights as desired and, as mentioned above, this suffices to prove statement (1) of the theorem.

Statement (1) implies statement (2) by Theorem~\ref{L:dim0}.  Since $L_{\sigma(k)}$ admits a nonzero ambidextrous trace and $L_{\lambda} \in \ideal_{L_{\sigma(k)}},$ by Theorems~\ref{T:inducedtrace} and~\ref{T:ambi!Trace} $L_{\lambda}$ admits an ambidextrous trace $\mt_{L_{\lambda}}$.  By (2) one has that $0 \neq \md_{L_{\sigma(k)}}(L_{\lambda}) = \mt_{L_{\lambda}}(\Id_{L_{\lambda}})$ which shows that this trace is nontrivial, hence $L_{\lambda}$ admits a nontrivial ambidextrous trace.  This proves (3). 

Finally, to prove (4) one observes that if $l$ is the atypicality of $L_{\nu}$ and $l \leq k$, then by (1) and \eqref{E:rectanglecontainments} we have that $L_{\nu} \in \ideal_{L_{\sigma(l)}} \subseteq \ideal_{L_{\lambda}}$.   Now if $l < k$, then by Proposition~\ref{T:TypeAgenKWconj} one has that $\md_{L_{\lambda}}(L_{\nu}) =0$.  On the other hand, if $l=k$, then by part (1) one has that 
\[
\ideal_{L_{\nu}} = \ideal_{L_{\sigma(k)}} = \ideal_{L_{\lambda}}
\] and by Lemma~\ref{L:dim0} it follows that $\md_{L_{\lambda}}(L_{\nu}) \neq 0$.

\end{proof}

\section{Finite Groups in Positive Characteristic}\label{S:finitegroups}
Fix an algebraically closed field $k$ of characteristic $p>0$.  We will
examine several examples of the above theory applied to representations of
finite groups over the field $k$.  Throughout we will consider the category
$\cat$ of finite dimensional representations over $k$ with the usual tensor
product, dual, etc.  In particular, the trivial module is the unit, $\unit =
k$, and $K=\End_{\cat}(\unit ) \cong k$.
We remark that as $k$ is algebraically closed, an indecomposable module is
absolutly indecomposable and an irreducible module is absolutely irreducible.

\subsection{Cylic group of order $p$}\label{SS:cylicgroup}  Let $C_{p}$ denote the cylic group of order $p$ and $A=kC_{p}$ its group algebra.

\subsubsection{Characteristic $p=2$}\label{SSS:char2}  We first consider the
case $p=2$.  Then there are two indecomposable modules, the trivial module $k$
and $A$ under the left regular action.  By Example~\ref{Ex:unit} trivial
module is automatically ambidextrous therefore we need only consider $A$.  To
be completely concrete, let $g \in C_{2}$ be the cyclic generator, and fix a
basis of $\{v_{1}, v_{2} \} \subset A$ satisfying $gv_{1}=v_{1}$ and
$gv_{2}=v_{1}+v_{2}$. That is, $A$ is 
nonsplit extension of the trivial module with itself and the vector $v_{1}$ spans the unique trivial submodule. 

Since $A$ is projective it follows that $A \otimes A$ is projective and so by dimensions $A \otimes A \cong A \oplus A$.  In terms of our basis we can write the direct sum decomposition as 
\[
A \otimes A = A \oplus A = \langle v_{1}\otimes v_{2}, v_{1}\otimes v_{1}\rangle \oplus \langle v_{1}\otimes v_{2} + v_{2}\otimes v_{2},  v_{1}\otimes v_{2} + v_{2}\otimes v_{1}\rangle,  
\] where $v_{1} \otimes v_{1}$ and $v_{1}\otimes v_{2} + v_{2}\otimes v_{1}$ span the unique trivial submodule of each direct summand.

With respect to the above direct sum decomposition, let 
\[
f: A \otimes A \to A \otimes A
\]
be given by zero on the first direct summand and the identity on the second direct summand.  As usual let $\langle \; \rangle: \End_{\cat}(A) \to k$ denote the canonical map.  Since $v_{1}$ spans the socle of $A$ it follows that for any $\varphi \in \End_{\cat}(A)$ one has $\varphi(v_{1}) = \langle \varphi \rangle v_{1}$.   Using this observation a direct calculation verifies that
\[
\langle \Tr_{L}(f) \rangle = 1 \text{ while } \langle \Tr_{R}(f) \rangle= 0.
\]  Therefore $A$ is \emph{not} ambidextrous as the canonical trace is not an ambidextrous trace.

In terms of ideals we have 
\[
\Proj \subsetneq \ideal_{k} = \cat.
\]  In this case $k$ is the unique ambidextrous indecomposable object in $\cat$.

\subsubsection{Characteristic $p>2$}\label{SSS:charodd} We now consider the
case $p>2$.  The indecomposable representations can be listed as $V_{1},
\dotsc , V_{p}$ where $\dim_{k}\left(V_{r} \right) = r$ for $r=1, \dotsc , p.$
In particular, $V_{1}=k$ and is the unique simple module and $V_{p}=A$ is the
projective cover of $k$ (cf.\ \cite[Chapter II]{Al}).

The modules $V_{1}, \dotsc , V_{p-1}$ all have nonzero categorical dimension.  By Theorem 3.3.1 it then follows that 
\[
\ideal_{V_{i}} = \ideal_{k} = \cat 
\] for $i=1, \dotsc , p-1$.  Furthermore by Lemma~\ref{P:ambdim}(2) these indecomposable modules are ambidextrous.

We now consider $V_{p} \cong A := kC_{p}$, the indecomposable projective cover of the trivial module.  Since $A \otimes A$ is projective we have 
\[
A \otimes A \cong A \oplus \dotsb \oplus A,
\] where there are $p$ direct summands.  We have an action of $\Z_{2}$ on $A \otimes A$ given by the generator of $\Z_{2}$ acting by the morphism $c=c_{A,A} : A \otimes A \to A \otimes A$.  Since $k$ has characteristic different from two, we obtain a decomposition of $A \otimes A$ into the $1$ and $-1$ eigenspaces under this action.  That is, the span of the symmetric and skew symmetric tensors, respectively:
\[
A \otimes A = S^{2}A \oplus \Lambda^{2}A.
\]  Since the braiding is a morphism, this is a decomposition as $C_{p}$-modules. 

We can refine this decomposition as follows. For $i=0, \dotsc , p-1$, let $v_{i} = g^{i} \in A$, so that $v_{0}, \dotsc , v_{p-1}$ forms a basis for $A$.  For $0 \leq i \leq j \leq p-1$ and $k=0, \dotsc , (p-1)/2$, let $E_{k}$ denote the $k$-span of the vectors in the set 
\[
\left\{v_{i}\otimes v_{j} + v_{j} \otimes v_{i} \mid j-i = k \right\}.
\]  A direct calculation verifies that $E_{k}$ is a $C_{p}$-submodule of $S^{2}A$ which is isomorphic to $A$.  Furthermore, we have 
\[
S^{2}A = E_{0} \oplus E_{1} \oplus \dotsb \oplus E_{(p-1)/2}.
\]
Similarly, for $k=1, \dotsc , (p-1)/2$, let $O_{k}$ denote the $k$-span of the vectors in the set 
\[
\left\{v_{i}\otimes v_{j} - v_{j} \otimes v_{i} \mid j-i = k \right\}.
\]  A direct calculation verifies that $O_{k}$ is a $C_{p}$-submodule of $\Lambda^{2}A$ which is isomorphic to $A$.  Furthermore, we have 
\[
\Lambda^{2}A = O_{1} \oplus O_{2} \oplus \dotsb \oplus O_{(p-1)/2}.
\]

Define a morphism $f: A \otimes A \to A \otimes A$ with respect to this direct sum decomposition as follows.  Let $f(O_{k})=0$ for $k=1, \dotsc , p-1$ and $f(E_{k})=0$ for $k=0, 2, \dotsc , p-1$.  The submodule $E_{1}$ is a free module generated by the vector $v_{1}\otimes v_{0} + v_{0} \otimes v_{1}$.  Thus on this summand it suffices to define $f$ by declaring 
\[
f(v_{1}\otimes v_{0} + v_{0} \otimes v_{1}) = v_{2}\otimes v_{1} - v_{1} \otimes v_{2} \in O_{2}.
\]  A direct calculation verifies that 
\[
\Tr_{L}(f)^{t}(v_{0}) = (-1/2)^{t}v_{2t}
\] for $t =1, 2, \dotsc $.  In particular, since $A$ is cyclically generated by $v_{0}$, it follows that $\Tr_{L}(f)^{p}=(-1/2)^{p}\Id_{A}$.  That is, $\Tr_{L}(f)$ is an isomorphism and, hence, 
\begin{equation}\label{E:nonvanishingtrace}
\langle \Tr_{L}(f) \rangle \neq 0.
\end{equation}

However, $f \in \End_{\cat}(A\otimes A)$ is in the $-1$ eigenspace of $\End_{\cat}(A \otimes A)$ under the conjugation action of $c$ (cf.\ Remark~\ref{R:ambi}). In particular, we have 
\[
\Tr_{R}(f) = \Tr_{L}(c \circ f \circ c^{-1}) = -\Tr_{L}(f).
\]  Therefore, as explained in Remark~\ref{R:ambi}, this shows that $A$ is not ambidextrous.

\subsection{The Klein Four Group}\label{SS:kleingroup} Let $k$ be an
algebraically closed field of characteristic two and let $V_{4}$ be the Klein
four group.  There is a single irreducible module, namely the trivial module.
The indecomposable modules were first classified by Ba\u{s}ev \cite{Bv}, and
Heller and Reiner \cite{HR}, but has its roots in work of Kronecker.  It may also be
found in \cite[Theorem 4.3.3]{Ben1} and we use the parameterization therein.
The indecomposable modules come in four types:

\begin{enumerate}
\item The unique projective module $D$ of dimension $4$.
\item $\Omega^{n}(k)$ for $n \in \Z$.  These are the Heller shifts of the
  trivial module and are of dimension $2|n|+1$.  In particular, $\Omega^{0}(k)
  = k$.
\item $V_{n}(\alpha)$ for $n \in \Z_{>0}$ and $\alpha \in k$.  These are of
  dimension $2n$.
\item $V_{n}(\infty)$ for $n \in \Z_{>0}$.  These are of dimension $2n$.
\end{enumerate}

We first describe the ideals generated by the indecomposable modules.  By the
Krull-Schmidt theorem every object in $\cat$ is a direct sum of
indecomposables and hence it suffices to compute the direct summands which
appear in the tensor product of two indecomposable modules.  To do so, we
recall that the representation ring (or Green ring) is defined as the ring
given by taking as elements the isomorphism classes of $kV_{4}$-modules with
addition and multiplication given by direct sum and tensor product,
respectively.  Thus we can make use of the calculation by Conlon \cite{Co} of
the table of products in the representation ring of $V_{4}$ (see also
\cite{Ar} for a summary of Conlon's results). From this one easily reads off
the following result.

\begin{proposition}\label{P:kleingroup}  Let the indecomposable modules of $V_{4}$ be as above.  Then we have the following containments of ideals.
\begin{enumerate}
\item $\Proj=\ideal_{D}$ is contained in all ideals.
\item For any $n \in \Z$, $\ideal_{k}=\ideal_{\Omega^{n}(k)}$.
\item For any $n \in  \Z_{>0}$ and $m=1, \dotsc ,n-1$, $\ideal_{V_{m}(\infty)}
  \subsetneq \ideal_{V_{n}(\infty)}$.
\item For any $\alpha \in k$, $n \in  \Z_{>0}$, and $m=1, \dotsc ,n-1$, we have
  $\ideal_{V_{m}(\alpha)} \subsetneq \ideal_{V_{n}(\alpha)}$.
\item For $\alpha \in k$, $\alpha \neq 0, 1$, we have
  $\ideal_{V_{2}(\alpha)}=\ideal_{V_{1}(\alpha)}$.
\end{enumerate}  
There are no other nontrivial containments. 
In short: $\forall \alpha\in k\setminus\{0,1\},\forall \beta\in
\{0,1,\infty\},\forall n\in\Z,$
\[
\Proj=\ideal_{D}\subsetneq
\begin{array}[c]{c}
  \ideal_{V_{1}(\alpha)}=\ideal_{V_{2}(\alpha)}\subsetneq
  \ideal_{V_{3}(\alpha)}\subsetneq\cdots \\
  \ideal_{V_{1}(\beta)}\subsetneq\ideal_{V_{2}(\beta)}\subsetneq
  \ideal_{V_{3}(\beta)}\subsetneq\cdots 
\end{array}
\subsetneq \ideal_{\Omega^{n}(k)}=\ideal_{k}=\cat.
\]
\end{proposition}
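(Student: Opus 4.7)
The plan is to combine three ingredients: Lemma~\ref{L:projectives} for part (1), the dimension formula for Heller translates together with Corollary~\ref{C:splitd} for part (2), and Conlon's explicit tensor-product tables in the Green ring of $V_{4}$ for parts (3)--(5).

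For (1), since $D$ is the unique indecomposable projective $kV_{4}$-module, Lemma~\ref{L:projectives} yields $\ideal_{D}=\Proj$ and shows that $\Proj$ sits inside every ideal of $\cat$. For (2), observe that $\dim_{k}\Omega^{n}(k)=2|n|+1$ is odd, hence non-zero in the characteristic-two field $k$; since the trivial module is ambidextrous by Example~\ref{Ex:unit} with $\md_{k}=\dim_{\cat}$, Corollary~\ref{C:splitd} forces the canonical epimorphism $\Omega^{n}(k)^{*}\otimes\Omega^{n}(k)\twoheadrightarrow k$ to split. Consequently $k\in\ideal_{\Omega^{n}(k)}$, which combined with the trivial inclusion $\Omega^{n}(k)\in\cat=\ideal_{k}$ yields $\ideal_{k}=\ideal_{\Omega^{n}(k)}$.

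For parts (3), (4), and (5), I would extract the necessary data from Conlon's decomposition of tensor products in the Green ring of $V_{4}$. The essential facts, all taken modulo $\Proj$, are: (i) tensor products across distinct parameters are projective, so $V_{a}(\alpha)\otimes V_{b}(\beta)\in\Proj$ whenever $\alpha\neq\beta$ in $k\cup\{\infty\}$; (ii) within a fixed parameter $\alpha$, the product $V_{a}(\alpha)\otimes V_{b}(\alpha)$ decomposes modulo $\Proj$ as an explicit sum of modules $V_{j}(\alpha)$ whose indices range over a prescribed interval; and (iii) for $\alpha\in k\setminus\{0,1\}$ a special identity appears whereby $V_{1}(\alpha)\otimes V_{1}(\alpha)$ contains $V_{2}(\alpha)$ as a direct summand modulo $\Proj$, an identity that fails for $\alpha\in\{0,1,\infty\}$. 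Item (ii) inductively yields $V_{m}(\alpha)\in\ideal_{V_{n}(\alpha)}$ for $m\leq n$, and item (iii) promotes the initial containment in the $V_{n}(\alpha)$-chain to the equality asserted in (5) for $\alpha\in k\setminus\{0,1\}$.

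The main obstacle is systematically proving the strict inclusions and ruling out any further containments. For the cross-family non-containments and for the strictness $\ideal_{V_{n}(\alpha)}\subsetneq\ideal_{k}$, I would invoke Proposition~\ref{P:supp}: the rank variety of $V_{n}(\alpha)$ is the single line in the two-dimensional rank variety of $V_{4}$ determined by $\alpha$, independent of $n$, whereas $\Omega^{n}(k)$ has full rank variety and $D$ has trivial rank variety. This detects at once all non-containments across different $\alpha$'s as well as the strict containment inside $\ideal_{k}$. The subtler part is the strict internal inclusion $\ideal_{V_{m}(\alpha)}\subsetneq\ideal_{V_{m+1}(\alpha)}$, for which support varieties are insufficient since both sides share the same rank variety; here one must return to Conlon's tables and argue directly that $V_{m+1}(\alpha)$ does not appear as an indecomposable summand of any module of the form $V_{m}(\alpha)\otimes X$ for $X\in\cat$ modulo $\Proj$, a check performed by inspecting the indices in the decomposition rule (ii).
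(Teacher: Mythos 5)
Your overall strategy matches the paper's, which simply reads the containments off from Conlon's Green-ring tables; you supply a more structured argument for parts (1) and (2) using the general theory developed earlier, and then fall back on Conlon for parts (3)--(5), which is what the paper's terse proof does for all five items. However, there are two citation issues you should fix. First, in part (2) you invoke Corollary~\ref{C:splitd}, but that corollary requires the module $V$ to be absolutely simple, and $\Omega^{n}(k)$ is not simple for $n\neq 0$; the correct reference is Theorem~\ref{T:dim0}, which holds for arbitrary $V\in\ideal_{J}$ and directly gives $\ideal_{\Omega^{n}(k)}=\ideal_{k}$ once you know $\md_{k}(\Omega^{n}(k))=\dim_{\cat}(\Omega^{n}(k))=(2|n|+1)\cdot 1_{k}\neq 0$. (Alternatively, Conlon's table itself shows $k$ is a summand of $\Omega^{n}(k)\otimes\Omega^{-n}(k)$, which is how the paper handles it.) Second, Proposition~\ref{P:supp} is stated and proved only in the Lie superalgebra setting via the cohomological support varieties of \cite{BKN1}; you cannot quote it directly for $kV_{4}$-modules. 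The analogous statement for finite groups (rank varieties satisfy $\mathcal{V}(M\otimes N)=\mathcal{V}(M)\cap\mathcal{V}(N)$ and are additive over summands) is standard and its proof mirrors Proposition~\ref{P:supp} exactly, but you should state that you are invoking the finite-group version, not the Lie superalgebra one. With those corrections your argument is sound and in the same spirit as the paper; the rank-variety reasoning is not strictly needed since Conlon's tables already determine all containments, but it does give a clean conceptual explanation of why the cross-parameter families are incomparable and why every $\ideal_{V_{n}(\alpha)}$ is strictly smaller than $\cat$.
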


We now examine the last case of the previous result in greater detail.  Let us fix $\alpha \in k$ with $\alpha \neq 0, 1$.  Fix generators $g,h \in V_{4}$ and set $x = 1+g$ and $y=1+h$ in $kV_{4}$.  Then $kV_{4} \cong k[x,y]/(x^{2}, y^{2})$.  It turns out that it is convenient to work with this presentation of the group algebra.  Then $V_{1}(\alpha)$ is a two dimensional representation with ordered basis $v_{1}, v_{2}$ and the action of $x$ and $y$ are given by the matrices $X_{1}$ and $Y_{1}$ (with respect to the given ordered basis), respectively, where 
\[
X_{1}=\left(\begin{matrix} 0 & 1 \\
                     0 & 0
\end{matrix} \right) \text{ and }
Y_{1}=\left(\begin{matrix} 0 & \alpha \\
                     0 & 0
\end{matrix} \right). 
\]  

We now consider $V_{1}(\alpha) \otimes V_{1}(\alpha)$.  For our purposes it is convenient to use the ordered basis $\{v_{1}\otimes v_{1}, v_{1}\otimes v_{2}+ v_{2}\otimes v_{1}, v_{2}\otimes v_{2}, v_{1}\otimes v_{2} \}$.  For brevity's sake we write $v_{ij}$ for the vector $v_{i}\otimes v_{j}$, where $i,j \in \{1,2 \}$.  In this basis the action of $x$ and $y$ are given by the $4 \times 4$ matrices 
\[
X_{2}=\left(\begin{matrix} 0 & W_{1} \\
                     0 & 0
\end{matrix} \right) \text{ and }
Y_{2}=\left(\begin{matrix} 0 & W_{\alpha} \\
                     0 & 0
\end{matrix} \right), 
\]  where $0$ denotes the $2 \times 2$ zero matrix, and where 
\[
W_{1}= \left(\begin{matrix} 1 & 1 \\
                                   1        & 0
\end{matrix} \right) \text{ and }   W_{\alpha}= \left(\begin{matrix} \alpha^{2} & \alpha \\
                                   \alpha        & 0
\end{matrix} \right).
\] 

A calculation shows that a $4 \times 4$ matrix commutes with $X_{2}$ and $Y_{2}$ and, hence, defines an element of $f \in \End_{\cat}(V_{1}(\alpha) \otimes V_{1}(\alpha))$ if and only if it is of the form 
\[
f= \left(\begin{matrix} A & B \\
                     0 & A^{T}
\end{matrix} \right),
\]  where $A$ and $B$ are $2 \times 2$ matrices of the form 
\begin{equation*}
A = \left(\begin{matrix} a & c \\
                         0 & a 
\end{matrix} \right) \text{ and } B=\left(\begin{matrix} b_{1} & b_{2} \\
                                            b_{3} & b_{4}
\end{matrix} \right)
\end{equation*} with $a,c,b_{1}, b_{2}, b_{3}, b_{4}$ arbitrary elements of $k$ and $A^{T}$ denoting the transpose of $A$.

Given $f$ as above we now compute $\langle \Tr_{R}(f) \rangle$ and $\langle \Tr_{L}(f) \rangle$.  As we did in Section~\ref{SSS:char2} we make use of the following observation: since $V_{1}(\alpha)$ has a simple socle spanned by $v_{1}$, then for any $h \in \End_{\cat}(V_{1}(\alpha))$ we have $h(v_{1}) = \langle h \rangle v_{1}$.  Thus it suffices to compute what happens to the element $v_{1}$.  We first compute $\Tr_{R}(v_{1})$ using the definition:
\begin{align*}
v_{1} &\mapsto v_{11} \otimes v_{1}^{*} + v_{12} \otimes v_{2}^{*} \\
      & \mapsto \left( av_{11}  \right) \otimes v_{1}^{*} + \left(b_{2}v_{11} + b_{4}(v_{12}+v_{21})+av_{12} \right)\otimes v_{2}^{*} \\
      & \mapsto b_{4} v_{1}.
\end{align*} Hence we have
\[
\langle \Tr_{R}(f) \rangle =  b_{4}.
\]

On the other hand, we consider $\Tr_{L}(v_{1})$:
\begin{align*}
v_{1} &\mapsto v_{1}^{*} \otimes v_{11} +  v_{2}^{*}\otimes v_{21} \\
      & = v_{1}^{*} \otimes v_{11} + v_{2}^{*}\otimes \left((v_{12} +v_{21}) + v_{12} \right) \\
       & \mapsto v_{1}^{*}\otimes (a v_{11}) + v_{2}^{*} \left(cv_{11}+a(v_{12}+v_{21}) + b_{2}v_{11}+b_{4}(v_{12}+v_{21}) + av_{12} \right)  \\
       & \mapsto b_{4} v_{1}.
\end{align*}   Hence we have
\[
\langle \Tr_{L}(f) \rangle =  b_{4}.
\] Summarizing the above results, we have the following proposition.

\begin{proposition}\label{P:V1alphaisambi}  Let $k$ be an algebraically closed field of characteristic $2$, let $V_{4}$ be the Klein four group, and let $V_{1}(\alpha)$ be the two dimensional indecomposable $V_{4}$-module where $\alpha \in k$ and $\alpha \neq 0,1$.  

Then the canonical trace on $V_{1}(\alpha)$ is ambidextrous and, hence, $V_{1}(\alpha)$ is ambidextrous.  Furthermore, $V_{2}(\alpha) \in \ideal_{V_{1}(\alpha)}$ and 
\[
\md_{V_{1}(\alpha)}\left(V_{2}(\alpha) \right) =0.
\]  

\end{proposition}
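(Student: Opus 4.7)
The plan is to handle the three assertions in turn, building on the explicit description of $\End_\cat(V_1(\alpha) \otimes V_1(\alpha))$ and the partial trace computation laid out immediately before the proposition. For the ambidextrous claim, the key input is that $V_1(\alpha)$ has simple socle spanned by $v_1$, so that $h(v_1) = \langle h \rangle v_1$ for every $h \in \End_\cat(V_1(\alpha))$. Parameterizing a generic $f \in \End_\cat(V_1(\alpha) \otimes V_1(\alpha))$ by the six scalars $a, c, b_1, b_2, b_3, b_4$ of the prescribed block form, the direct evaluation of $\Tr_R$ and $\Tr_L$ on the vector $v_1$ yields $\langle \Tr_R(f) \rangle = b_4 = \langle \Tr_L(f) \rangle$. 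This is exactly the condition of Definition~\ref{D:ambitrace}, so the canonical projection $\langle\ \rangle \colon \End_\cat(V_1(\alpha)) \to k$ is an ambidextrous trace. Since $k$ is algebraically closed, $V_1(\alpha)$ is absolutely indecomposable, so by Definition~\ref{D:ambidef} it is ambidextrous.

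For the containment $V_2(\alpha) \in \ideal_{V_1(\alpha)}$ I would appeal to part~(5) of Proposition~\ref{P:kleingroup}. Concretely, the underlying fact is Conlon's identification $V_1(\alpha) \otimes V_1(\alpha) \cong V_2(\alpha)$ in the representation ring of $V_4$, which exhibits $V_2(\alpha)$ as the whole of (hence a summand of) $V_1(\alpha) \otimes V_1(\alpha)$.

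For the vanishing of the modified dimension I would use that same isomorphism: choose $W = V_1(\alpha)$ together with an isomorphism $\alpha \colon V_2(\alpha) \xrightarrow{\cong} V_1(\alpha) \otimes V_1(\alpha)$ and its inverse $\beta$, so that $\beta \circ \alpha = \Id_{V_2(\alpha)}$. Definition~\ref{D:moddimdef} then specializes to
\begin{equation*}
\md_{V_1(\alpha)}(V_2(\alpha)) = \langle \Tr_R(\alpha \circ \beta) \rangle = \langle \Tr_R(\Id_{V_1(\alpha) \otimes V_1(\alpha)}) \rangle.
\end{equation*}
An immediate unwinding of \eqref{E:trR} yields $\Tr_R(\Id_{V \otimes V}) = \dim_\cat(V) \cdot \Id_V$ for any object $V$, so the right-hand side equals $\dim_\cat(V_1(\alpha)) = 2$, which vanishes in $k$ since $\textup{char}\, k = 2$. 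The only step requiring genuine work is the initial matrix calculation of $\End_\cat(V_1(\alpha) \otimes V_1(\alpha))$ together with the two partial traces on it; the remaining assertions are formal consequences of the general machinery.
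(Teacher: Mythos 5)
Your argument is correct and follows the paper's route: the ambidextrous claim rests on the same matrix-entry computation showing $\langle \Tr_R(f)\rangle = b_4 = \langle \Tr_L(f)\rangle$, and the membership $V_2(\alpha) \in \ideal_{V_1(\alpha)}$ comes from Conlon's isomorphism $V_1(\alpha)\otimes V_1(\alpha)\cong V_2(\alpha)$. The one small divergence is in the final step: the paper simply plugs $f=\Id$ into the matrix computation and reads off $b_4 = 0$, whereas you invoke the general identity $\Tr_R(\Id_{V\otimes V}) = \dim_\cat(V)\cdot\Id_V$ to get $\md_{V_1(\alpha)}(V_2(\alpha)) = \dim_\cat(V_1(\alpha)) = 2 = 0$ in characteristic $2$; this is equally quick and makes it clearer conceptually why the modified dimension vanishes (it is forced by $\dim_\cat(V_1(\alpha))$ being divisible by the characteristic), independent of the particulars of $V_1(\alpha)$.
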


\begin{proof}  The statement that $V_{1}(\alpha)$ is ambidextrous follows immediately from the above calculations.  It follows from the calculations of Conlon that $V_{1}(\alpha) \otimes V_{1}(\alpha) \cong V_{2}(\alpha)$.  Taking $f$ to be the identity map in the above calculation, we immediately obtain the final statement.
\end{proof}

\begin{remark}\label{R:nonambis}  Computer calculations show that $V_{n}(\alpha)$ for $n=2,3,4$ and $\alpha \in k$, $\alpha \neq 1,0$, and $V_{n}(\infty)$ for $n=1,2,3,4$ are not ambidextrous.

\end{remark}

\section{Representations of $\sll_{2}(k)$ In Positive Characteristic}\label{S:sl2}  

\subsection{}\label{SS:LieAlgprelims}Fix an algebraically closed field of characteristic $p>2$.  Let $\fg = \sll_{2}(k)$ and let $\cat$ be the category of \emph{all} finite dimensional $\fg$-modules. The ribbon category structure on $\cat$ is given by the usual coproduct, braiding, etc.   Our standard references for results on $\cat$ is \cite{FP}.  In this section we will examine the simple objects in $\cat$.

Recall that $\fg$ is a restricted Lie algebra and so admits a $p$-power map $x \mapsto x^{[p]}$ ($x \in \fg$) and that in the enveloping algebra, $U=U(\fg )$, the elements $x^{p}-x^{[p]}$ are central.  Let $\mathcal{O}$ denote the central subalgebra of $U$ generated by the set $\{ x^{p}-x^{[p]} \mid x \in \fg \}$.  If $H,E,F$ are the standard basis elements of $\fg$, then $\mathcal{O}$ is a polynomial ring in the elements $H^{p}-H^{[p]}=H^{p}-H$, $E^{p}-E^{[p]}=E^{p}$, and $F^{p}-F^{[p]}=F^{p}$ (recalling that $H^{[p]}=H$, $E^{[p]}=F^{[p]}=0$).  Given a simple $\fg$-module, $S$, then Schur's lemma implies that $S$ is absolutely simple.  Thus every element of $\mathcal{O}$ acts on $S$ by a scalar and there is an algebra homomorphism $\chi : \mathcal{O} \to k$ such that $z.s = \chi(z)s$ for all $z \in \mathcal{O}$ and $s \in S$.  For each such $\chi$, one can consider the full subcategory of $\cat$, $\cat^{[\chi]}$ consisting of all modules which are annihilated by $(z-\chi(z))^{N}$ for all $z \in \mathcal{O}$ and sufficently large $N$.  One then has the following decomposition 
\[
\cat = \bigoplus_{\chi} \cat^{[\chi]},
\] where the direct sum is over all algebra homomorphisms $\chi : \mathcal{O} \to k$.  In particular an indecomposable module lies entirely within one $\cat^{[\chi]}$.  We call each full subcategory $\cat^{[\chi]}$ a \emph{block} of $\cat$.  Also note that if $X \in \cat^{[\chi_{1}]}$ and $Y \in \cat^{[\chi_{2}]}$ then $X \otimes Y \in \cat^{[\chi_{1}+\chi_{2}]}$, and $X^{*} \in \cat^{[-\chi_{1}]}$.

The blocks $\cat^{[\chi]}$ of $\cat$ correspond bijectively to elements of $\fg^{*}$.  Recall that $G=GL_{2}(k)$ acts on $\fg$ via the adjoint action and on $\fg^{*}$ by the dual adjoint action.  Given a $\fg$-module $M$ and $g \in G$, we write $M^{g}$ for the \emph{twist of $M$ by $g$}; that is, the $\fg$-module given by setting $M^{g} = M$ as a vector space with $\fg$-action given by $x.m = (g.x)m$ for all $x \in \fg$ and $m \in M^{g}$.  If two elements of $\fg^{*}$ are conjugate under the dual adjoint action of $GL_{2}(k)$ on $\fg^{*}$, say by $g \in G$, then the corresponding blocks are isomorphic via twisting by $g^{-1}$.  Since twisting by an element of $G$ is a tensor functor which defines an automorphism on the category $\cat$, we can restrict our analysis to moduled which lie in $\cat^{[\chi]}$ for a $\chi$ in each of the orbits of $\fg^{*}$; thus we are reduced to the following cases.  See \cite[Section~2]{FP} for details.
\begin{enumerate}

\item[I.] The \emph{restricted type}.

\begin{tabular}{ccc} 
$\chi(H^{p}-H)=0$ & $\chi(E^{p})=0$ & $\chi(F^{p}) =0.$
\end{tabular}  In this case $\chi =0$ and we write $\chi_{0}$.

\item [II.] The \emph{semisimple type}. For any fixed  $\alpha \in k,$ $\alpha \neq 0$,

\begin{tabular}{ccc}
$\chi(H^{p}-H)=\alpha^{p}$ & $\chi(E^{p})=0$ & $\chi(F^{p}) =0$
\end{tabular} 
\item[III.] The \emph{regular nilpotent type}.

\begin{tabular}{ccc} 
$\chi(H^{p}-H)=0$ & $\chi(E^{p})=0$ & $ \chi(F^{p}) =1$.
\end{tabular}

\end{enumerate}

\subsection{The ideals of $\cat$} We first analyze the ideals generated by a simple module in each of the cases.

\noindent \textbf{Case I:}  Say $\chi=\chi_{0}$ and $J \in \cat^{[\chi_{0}]}$ is simple.   Then by \cite[Proposition~2.4]{FP} one has the following possibilities.
\begin{enumerate} 
\item If $\md_{k}(J) =0$, then $J$ has vector space dimension $p$ and $J$ is the Steinberg module.  But the Steinberg module is projective and then by Lemma~\ref{L:projectives} we have 
\[
\ideal_{J} = \Proj.
\] 
\item One has $\md_{k}(J) = \dim_{\cat}(J) \neq 0$ and by Lemma~\ref{L:dim0} we have
\[
\ideal_{J}=\ideal_{k}=\cat.
\]  

\end{enumerate}

\noindent \textbf{Case II:}  Say $\chi$ is of semisimple type and $J$ is a simple object in $ \cat^{[\chi]}$.   Then by \cite[Corollary~2.2]{FP} $\cat^{[\chi]}$ is a semisimple category and, hence, $J$ is a projective module.  Applying Lemma~\ref{L:projectives} we have
\[
\ideal_{J} = \Proj.
\] 

\noindent \textbf{Case III:} Say $\chi$ is of regular nilpotent type and $J$ is a simple object in $\cat^{[\chi]}$.  By \cite[Proposition~2.3]{FP} we have the following two possibilities.
\begin{enumerate} 
\item If $J$ is the unique simple projective in $\cat^{[\chi]}$, then again we have 
\[
\ideal_{J} = \Proj.
\]
\item If $J$ is simple and not projective, then  $\dim_{k}(J) = p$ and so $\md_{k}(J)=0$.  By Lemma~\ref{L:projectives} and Lemma~\ref{L:dim0} one has 
\begin{equation}\label{E:nilpotentcase}
\Proj \subsetneq \ideal_{J} \subsetneq \ideal_{k}. 
\end{equation}
We now further analyze the objects which appear in $\ideal_{J}$ in this
situation.

By \cite[Proposition 2.3]{FP} the simple modules in $\cat^{[\chi]}$ are
parameterized non-uniquely by elements $\lambda \in k$ which satisfy $\lambda^{p}-\lambda = 0$; that is, the elements of the prime subfield $\mathbb{F}_{p} \subseteq k$. As we will need it later, let us give a brief sketch of the
structure of these modules (we follow the construction in the proof of
\cite[Proposition 2.3]{FP}).  As a point of terminology, given a
$\fg$-module, $M$, we call a vector $m \in M$ a \emph{primitive} vector if
$E.m=0$.  Given $ \lambda \in \mathbb F_p\subset k$.
The simple module $V_{\chi, \lambda}$
is a ``baby Verma module'' and is generated by a primitive vector of weight
$\lambda$, say $v_{0}$.  Then 
\[
\{v_{i}:=F^{i}v_{\lambda} \mid i = 0 ,\dotsc ,
p-1 \}
\]
is a basis for $V_{\chi, \lambda}$. 
On this basis the action of $\sll_{2}$ is given by
\begin{align}\label{E:sl2action}
Hv_{i}&= (\lambda-2i)v_{i} \notag \\
Fv_{i}&=v_{i+1} \\
Ev_{i}&= i(\lambda - i +1)v_{i-1} \notag
\end{align}
where the subscripts are understood to be written modulo $p$.  In particular, the weights of $V_{\chi, \lambda}$ are precisely the elements of $\mathbb{F}_{p} \subseteq k$ and each has multiplicity one.  Moreover, as $V_{\chi, \lambda}$ is defined by induction from a one dimensional representation of the Borel subalgebra, it is a  universal highest weight representation within $\cat^{[\chi]}$.

If $\lambda = p-1$, then
$V_{\chi, \lambda}$ has a unique primitive vector and is the unique projective simple module in $\cat^{[\chi]}$.
For all other $\lambda$, $V_{\chi, \lambda}$
contains a second primitive vector, namely the vector $F^{\lambda^{*}}v_{0}$, where
$\lambda^{*}:=p-2-\lambda$.  However, $V_{\chi, \lambda^{*}}\cong V_{\chi,
  \lambda}$.  Thus if $S$ is a simple module, then the weight of a primitive
vector identifies $S$ up to isomorphism. There are $(p+1)/2$ simple objects in $\cat^{[\chi]}$ (labelled by $0,1, \dotsc , (p-3)/2, p-1$).

The ideals in this case can be handled as an exercise in the rank variety theory introduced by Friedlander and Parshall \cite{FP0}.  This can also be viewed as the first instance of a much more general result which shows that thick tensor ideals of $\cat$ such as these are classified by rank varieties \cite{FPevt}.  Rather than give all the details, let us summarize by saying that the ambiant space for these varieties is the two dimensional variety 
\[
\mathcal{V}_{\sll_{2}(k)} = \left\{x \in \sll_{2}(k) \mid x^{[p]}=0\right\},
\] and that if $J$ is a nonprojective simple module which lies in $\cat^{[\chi]}$ (where $\chi$ is as in Case III), then the variety of $J$ is the line through the origin in $\mathcal{V}_{\sll_{2}(k)}$ given by $kE$.  If $S$ is a nonprojective simple module in a block isomorphic to $\cat^{[\chi]}$ via twisting by $G$, then the variety for $S$ is a line in $\mathcal{V}_{\sll_{2}(k)}$.  Furthermore, if $S$ is a simple module in $\cat$, then $S \in \ideal_{J}$ if and only if the variety of $S$ lies in the variety of $J$.

\end{enumerate}

Let us summarize the above calculations in the following theorem which also accounts for twisting by $G$.

\begin{theorem}\label{T:sl2ideals}  Let $J$ be a simple $\mathfrak{sl}_2(k)$-module lying in $\cat^{[\nu]}$.  Then the following statements are true.

\begin{enumerate}
\item If $\nu=0$ and $J$ is not isomorphic to the Steinberg module, then $\dim_k(J) \neq 0$ and $\ideal_J = \ideal_k=\cat$.
\item If $J$ is projective (i.e.\  $\nu=\chi_{0}$ and $J$ the Steinberg module, $\nu$ is conjugate to the regular nilpotent and $J$ the unique projective simple object in $\cat^{[\nu]}$, or $\nu$ is conjugate to a semisimple type), then $\ideal_J=\Proj$.
\item If $\nu$ is conjugate to the nilpotent regular and $J$ is not projective, then the simple objects in $\ideal_{J}$ are precisely the simple objects in $\cat$ whose rank variety is contained in the rank variety of $J$.
\end{enumerate}

\end{theorem}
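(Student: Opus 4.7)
The plan is to handle the three statements in turn, in each case reducing by the $G$-action on $\fg^{*}$ to the standard representative of the coadjoint orbit given in Section~\ref{SS:LieAlgprelims} and then invoking the case analysis assembled there.

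First I would record that for any $g\in GL_{2}(k)$ the twist functor $M\mapsto M^{g}$ is a tensor autoequivalence of $\cat$ which carries $\cat^{[\chi]}$ to $\cat^{[g\cdot \chi]}$, preserves simples, preserves projectives, and sends ideals to ideals (in particular $\ideal_{J^{g}}=(\ideal_{J})^{g}$). Consequently it suffices to establish each statement with $\nu$ equal to one of the three representatives I, II, III. For statement (1) the hypothesis already forces $\nu=\chi_{0}$. By \cite[Proposition~2.4]{FP}, the simple $\sll_{2}(k)$-modules in $\cat^{[\chi_{0}]}$ are restricted of $k$-dimension $1,2,\dots ,p$, the last being the Steinberg module; ruling out the Steinberg module gives $\dim_{k}(J)\not\equiv 0\pmod p$, so $\md_{k}(J)=\dim_{\cat}(J)\neq 0$. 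Since $J\in \ideal_{k}$, Lemma~\ref{L:dim0} yields $\ideal_{J}=\ideal_{k}=\cat$.

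For statement (2), in each of the three listed sub-cases $J$ is projective in $\cat$: the Steinberg module is well known to be projective, in a block of regular nilpotent type the unique simple projective is projective by construction, and every simple in a semisimple-type block is projective because by \cite[Corollary~2.2]{FP} that block is itself semisimple. Lemma~\ref{L:projectives} then gives $\ideal_{J}=\Proj$ in each sub-case.

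For statement (3) I would reduce, via twisting, to $\nu=\chi$ with $\chi(H^{p}-H)=\chi(E^{p})=0$ and $\chi(F^{p})=1$, and $J$ one of the non-projective simple baby Verma modules $V_{\chi,\lambda}$, $\lambda\in \mathbb{F}_{p}\smallsetminus \{p-1\}$, described by \eqref{E:sl2action}. The key input is the classification of thick tensor ideals of $\cat$ by conical subvarieties of $\mathcal{V}_{\sll_{2}(k)}=\{x\in \sll_{2}(k)\mid x^{[p]}=0\}$ established via rank variety theory of Friedlander--Parshall and Friedlander--Pevtsova \cite{FP0, FPevt}. Once one observes that the ideals $\ideal_{J}$ of the present paper (closed under retracts and tensoring with arbitrary objects) coincide on simples with the thick tensor ideals of \cite{FPevt}, it follows that a simple $S\in \cat$ lies in $\ideal_{J}$ iff its rank variety is contained in that of $J$; a direct computation using \eqref{E:sl2action} shows the rank variety of each such $J$ equals the line $kE\subset \mathcal{V}_{\sll_{2}(k)}$, and after applying a suitable $g\in G$ this yields an arbitrary line through the origin. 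The main obstacle is precisely this last compatibility step: reconciling our notion of ideal with the thick tensor ideals of \cite{FPevt} and verifying the rank variety is $G$-equivariantly compatible with twisting. Once these are in hand, the description in (3) follows immediately.
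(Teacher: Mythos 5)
Your proposal matches the paper's own argument essentially step for step: reduce by $G$-twisting to the three standard representatives from Section~\ref{SS:LieAlgprelims}, handle (1) by observing $\dim_{\cat}(J)\neq 0$ and invoking Lemma~\ref{L:dim0}, handle (2) by projectivity and Lemma~\ref{L:projectives}, and handle (3) by appealing to rank-variety theory \cite{FP0,FPevt}. The compatibility of the paper's notion of ideal with thick tensor ideals in \cite{FPevt}, which you flag as the remaining obstacle, is also left unverified in the paper itself (``Rather than give all the details, let us summarize\dots''), so you have correctly identified the one place where both your sketch and the source are terse rather than found a new gap.
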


\begin{remark}\label{R:sl2remarks} The complete collection of ideals generated by simple objects looks like $$\Proj \subsetneq \ideal_J \subsetneq  \ideal_k,$$ where $J$ ranges over a one parameter family of simple modules corresponding to lines through the origin in $\mathcal{V}_{\sll_{2}(k)}$.
\end{remark}

\subsection{The restricted and projective modules} We note that a simple object $S$ is ambidextrous if and only if its twist $S^{g}$ is ambidextrous for any $g \in G$.  Therefore the following two theorems completely characterize the simple ambidextrous objects in $\cat$.

\begin{theorem}\label{T:sl2ambis}  Let $L$ be a simple object in $\cat^{[\chi]}$.   If $\chi = \chi_{0}$ or if $L$ is projective, then $L$ is ambidextrous.
\end{theorem}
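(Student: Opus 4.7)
The plan is to handle the two parts of the hypothesis in turn.

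First, suppose $\chi = \chi_0$ and $L = L(i)$ is not projective, i.e.\ $0 \le i \le p-2$. Then $\dim_\cat L = i+1$ is a unit in $k$, since $p > 2$ forces $1 \le i+1 \le p-1$. As $\cat$ is abelian, $L$ is absolutely indecomposable, and $\End_\cat(L)$ is artinian, Lemma~\ref{P:ambdim}\eqref{LI:amb2} directly yields an ambidextrous trace---namely a nonzero scalar multiple of the canonical trace. This handles all restricted simples except the Steinberg $L(p-1)$, which is projective and so falls under the second case.

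For the remaining cases $L$ is projective and simple; this covers the Steinberg $L(p-1)$ in the restricted block, each simple in a semisimple block, and the unique simple projective in any regular nilpotent block. Here $\End_\cat(L) = k\,\Id_L$, and since $\cat$ is symmetric we have $c_{L,L}^2 = \Id$. My plan is to show $\End_\cat(L\otimes L)$ is commutative, so that $c_{L,L}$ is central and Lemma~\ref{P:ambdim}\eqref{LI:amb1} applies to give ambidexterity of the canonical trace. Equivalently---as in Remark~\ref{R:ambi}(2)---it suffices to verify $\Hom_\cat(S^2 L, \Lambda^2 L) = \Hom_\cat(\Lambda^2 L, S^2 L) = 0$, so that the $-1$-eigenspace of $c_{L,L}$-conjugation on $\End_\cat(L\otimes L)$ is trivial. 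Both formulations follow once one shows $L \otimes L$ decomposes as a multiplicity-free direct sum of pairwise non-isomorphic indecomposable modules.

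I would verify this subcase by subcase. For the Steinberg $L = L(p-1)$, a direct computation produces the decomposition $L \otimes L = S^2 L \oplus \Lambda^2 L$ in which the two summands are non-isomorphic indecomposables with distinct tops, whence $\Hom$-vanishing between them follows because morphisms between indecomposables with different tops are zero. For $L$ a simple projective in a semisimple block $\cat^{[\chi]}$, the block $\cat^{[2\chi]}$ containing $L\otimes L$ is also semisimple (using $p > 2$), and a weight-space analysis using the baby Verma realization analogous to~\eqref{E:sl2action} exhibits exactly one primitive vector of $L \otimes L$ at each weight of the coset of $\mathbb{F}_p$ on which $L\otimes L$ is supported; since simple baby Vermas in such a block are distinguished up to isomorphism by the weight of their primitive vector, this delivers a multiplicity-free decomposition into $p$ pairwise non-isomorphic simples. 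The regular nilpotent projective case is handled by the analogous primitive-vector computation, accounting for the identification $\lambda \leftrightarrow p-2-\lambda$ noted in the discussion preceding the theorem.

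The main obstacle is the explicit identification of the summands of $L \otimes L$ in each subcase. For the Steinberg this requires understanding the Loewy structure of the restricted PIMs and matching $S^2 L$ and $\Lambda^2 L$ with specific indecomposables whose distinct tops provide the $\Hom$-vanishing. For the semisimple and regular nilpotent cases, the key technical input is that a simple baby Verma is determined up to isomorphism by the weight of a primitive vector; once this is in hand, a $p$-fold primitive count in $L \otimes L$ forces multiplicity one in every isotypic component, and commutativity of $\End_\cat(L\otimes L)$ becomes automatic.
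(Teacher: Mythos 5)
Your proposal takes a genuinely different route from the paper for the projective case, and one subcase as written contains a real error.

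For the projective simples, the paper does \emph{not} verify multiplicity-freeness of $L\otimes L$ case by case. It computes the decomposition of $V_{\chi,\lambda}\otimes V_{\chi,\lambda}$ only in a \emph{semisimple} block (where it is easy: $p$ pairwise non-isomorphic baby Vermas), concludes that this one object is ambidextrous, and then \emph{propagates}: since $\ideal_{V_{\chi,\lambda}}=\Proj$, Theorems~\ref{T:ambi!Trace} and~\ref{T:inducedtrace} produce a trace on every $L\in\Proj$, Lemma~\ref{L:dim0} shows $\md_{V_{\chi,\lambda}}(L)\neq 0$ so the trace is nonzero, and for simple $L$ a nonzero trace is automatically a scalar multiple of the canonical one. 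That propagation step is exactly what lets the paper avoid any computation inside $\cat^{[\chi_0]}$ or the nilpotent regular block. Your plan to instead establish commutativity of $\End_\cat(L\otimes L)$ separately for Steinberg and for the nilpotent-regular projective simple is feasible in principle, but it re-does work the ideal machinery makes unnecessary.

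There is also a genuine gap in your Steinberg subcase. You claim $S^2 L$ and $\Lambda^2 L$ are non-isomorphic indecomposables and that morphisms between indecomposables with distinct tops vanish. Neither holds. For $p\geq 5$, $\dim_k S^2(\operatorname{St})=\binom{p+1}{2}$ exceeds the dimension $2p$ of the largest indecomposable projective in $\cat^{[\chi_0]}$, so $S^2(\operatorname{St})$ is decomposable (e.g.\ for $p=5$, $\operatorname{St}\otimes\operatorname{St}\cong \operatorname{St}\oplus Q(0)\oplus Q(2)$ and the symmetric part already has dimension $15$). And it is false in general that $\operatorname{Hom}_\cat(M,N)=0$ whenever $M,N$ are indecomposable with distinct heads: the head of $M$ may well occur in the radical of $N$. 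What does hold, and what your argument actually needs, is that $\operatorname{St}\otimes\operatorname{St}$ is multiplicity-free as a sum of indecomposable projectives lying in pairwise distinct blocks, so that $\End_\cat(\operatorname{St}\otimes\operatorname{St})$ is a product of commutative local rings. That is a correct replacement, but it is a separate computation; together with the (only sketched) nilpotent-regular case, this makes the direct route considerably heavier than the paper's, where the single semisimple-block computation plus the propagation mechanism of Section~\ref{S:ModDim} suffices.
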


\begin{proof} 
   If $L$ is in
  $\cat^{[\chi_{0}]}$ and is not projective (i.e.\ not the Steinberg
  representation), then the statement follows from Lemma~\ref{P:ambdim} and
  the fact that $\md_{k} (L) \neq 0$.

  Now assume $L$ is projective.  We first explicitly determine an ambidextrous
  projective object.  Fix $\alpha \in k,$ $\alpha \neq 0$ and let $\chi$ be
  the semisimple character corresponding to $\alpha$ in Case II.  By
  \cite[Proposition 2.1]{FP0} the simple objects in $\cat^{[\chi]}$ are
  described as follows.  Let $\lambda \in k$ be an element which satisfies
  $\lambda^{p}-\lambda = 0$; that is, $\lambda$ is an element of the prime subfield $\mathbb{F}_{p} \subseteq k$.
  Then there is a unique irreducible module in
  $\cat^{[\chi]}$ which we will denote by $V_{\chi, \lambda}$ which is
  commonly called the ``baby Verma module'' of highest weight $\lambda$.  It
  is generated by a vector $v_{\lambda}$ satisfying $Hv_{\lambda} = \lambda
  v_{\lambda}$ and $Ev_{\lambda}=0$ and with basis $\{v_{i}:=F^{i}v_{\lambda}
  \mid i = 0 ,\dotsc , p-1 \}$.  The element $\lambda$ uniquely determines
  $V_{\chi, \lambda}$ and the $p$ elements of $\mathbb{F}_{p}$
  give a complete irredundant set of simple objects in $\cat^{[\chi]}$. Now we
  consider the module
\[
T=V_{\chi, \lambda} \otimes V_{\chi, \lambda}.
\]  Then $T$ is an object in $\cat^{[2\chi]}$.  However $2\chi$ is again semisimple (corresponding to the element $2\alpha \in k$), hence $T$ is completely reducible into baby Verma modules of the form $V_{2\chi, \mu}$.  A direct calculation verifies that in fact we have
\[
T \cong V_{2\chi, 2\lambda} \oplus V_{2\chi, 2\lambda-1} \oplus V_{2\chi, 2\lambda-2} \oplus \dotsb  \oplus V_{2\chi, 2\lambda-(p-1)}.
\]  
These simple modules have different
highest weights and, hence, are pairwise non-isomorphic.  By
Remark~\ref{R:ambi} it follows that $V_{\chi, \lambda}$ is ambidextrous.  Now
since $V_{\chi, \lambda}$ is projective, it follows that $\ideal_{V_{\chi,
    \lambda}}=\Proj$ and from Theorems~\ref{T:ambi!Trace}
and~\ref{T:inducedtrace} that we have a (possibly trivial) ambidextrous trace
on any simple $L \in \Proj $.  On the other hand, by Lemma~\ref{L:dim0} we
have
\[
0 \neq \md_{V_{\chi, \lambda}}(L) = \mt _{L}(\Id_{L}),
\] so $\mt_{L}$ is nontrivial.  Finally, since $L$ is simple, $\mt_{L}$ is necessarily a scalar multiple of the canonical trace and hence $L$ is ambidextrous. Therefore every projective simple object in $\cat$ is ambidextrous.  
\end{proof}

\subsection{The nonprojective simples in the nilpotent regular block} We now consider the nonprojective simple objects in $\cat^{[\chi]}$ when
$\chi$ is the nilpotent regular from Case III. 

We first study the decomposition of $V_{\chi,0} \otimes V_{\chi,0}$.  Similar formulas were obtained in \cite{BO, Pr}. We note that if $\chi$ is regular nilpotent, then $2\chi$ is $GL_{2}(k)$-conjugate to $\chi$ and hence our discussion of the category $\cat^{[\chi]}$ applies equally well to $\cat^{[2\chi]}$.

\begin{lemma}\label{L:nilpotenttensors} Let $\chi$ be the regular nilpotent in Case III and let $V=V_{\chi,0}$ be the simple $\fg$-supermodule with primitive vector of weight $0$.  Then 
\begin{equation}\label{E:tensordecomp}
V \otimes V = W_{0} \oplus W_{1} \oplus \dotsb \oplus W_{(p-3)/2} \oplus V_{2\chi, p-1}.
\end{equation}
where $V_{2\chi, p-1}$ is the unique projective simple object of $\cat^{[2\chi]}$.   For $k=1, \dotsc , (p-3)/2$ each $W_{k}$ is isomorphic to the projective cover of $V_{2\chi, k}$.  Furthermore $W_{0}$ is the direct sum of two simple modules, both of whom are isomorphic to $V_{2\chi, 0}$.
\end{lemma}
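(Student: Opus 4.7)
The plan is to decompose $V\otimes V$ directly by analyzing its primitive vectors (the kernel of $E$) and the submodules they generate, and then to identify the resulting indecomposable summands using the structure of $\cat^{[2\chi]}$. Since $2\chi$ is $GL_{2}(k)$-conjugate to $\chi$, the description of simples and their projective covers in $\cat^{[\chi]}$ given in the excerpt applies verbatim to $\cat^{[2\chi]}$; in particular $V_{2\chi,p-1}$ is the unique projective simple, and for $\mu\in\{0,1,\ldots,(p-3)/2\}$ the simple $V_{2\chi,\mu}$ has a projective cover of dimension $2p$.

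First I would enumerate the primitive vectors of $V\otimes V$ weight by weight. Using
\[
E(v_i\otimes v_j) = -i(i-1)\,v_{i-1}\otimes v_j - j(j-1)\,v_i\otimes v_{j-1},
\]
the condition $Eu=0$ on the weight-$(-2s)$ component $u=\sum_{i+j\equiv s}c_{i,j}\,v_i\otimes v_j$ becomes, after setting $c_a:=c_{a,s-a\bmod p}$, the linear system
\[
a(a+1)\,c_{a+1}+(s-a)(s-a-1)\,c_a=0,\qquad a\in\mathbb{F}_p.
\]
A direct case analysis shows that this has a $2$-dimensional solution space for $s\in\{0,1\}$ and a $1$-dimensional solution space for $s=2,\ldots,p-1$, giving $p+2$ primitive vectors in total. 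At weight $0$ these are spanned by $v_0\otimes v_0$ and $\sum_{k=1}^{p-1}(-1)^{k-1}v_k\otimes v_{p-k}$; at weight $-2$ by $v_0\otimes v_1$ and $v_1\otimes v_0$.

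Second, for each primitive $u$ of weight $\mu$ I would compute the cyclic orbit $u,Fu,F^2u,\ldots$ and identify $U(\fg)\cdot u$. For $v_0\otimes v_0$ the Leibniz formula for $F^p$ in characteristic $p$ gives $F^p(v_0\otimes v_0)=2\,(v_0\otimes v_0)$, the vectors $F^i(v_0\otimes v_0)$ for $0\le i\le p-1$ are linearly independent, and $E$ annihilates the entire cyclic span; so $U(\fg)\cdot(v_0\otimes v_0)\cong V_{2\chi,0}$. The second weight-$0$ primitive yields an independent simple copy of $V_{2\chi,0}$, producing $W_0=V_{2\chi,0}\oplus V_{2\chi,0}$. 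Similarly, the unique weight-$(p-1)$ primitive generates the projective-simple summand $V_{2\chi,p-1}$.

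The remaining $p-3$ primitives sit at weights $k$ and $p-2-k$ for $k=1,\ldots,(p-3)/2$ and pair off into the summands $W_k$. The main obstacle is identifying each $W_k$ as the projective cover $P(V_{2\chi,k})$ rather than some other $2p$-dimensional indecomposable of $\cat^{[2\chi]}$. For this one checks that the submodule generated by the weight-$k$ primitive is not simple (its $F$-orbit does not close at dimension $p$, because $E$ fails to annihilate $F^p$ applied to the primitive), so the summand containing it must have dimension at least $2p$; coupled with the dimension count
\[
2p+\frac{p-3}{2}\cdot 2p+p=p^{2}=\dim(V\otimes V)
\]
and the uniqueness (up to isomorphism) of the indecomposable $2p$-dimensional module with head $V_{2\chi,k}$, Krull--Schmidt forces the claimed direct sum decomposition.
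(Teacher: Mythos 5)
Your enumeration of primitive vectors and the identification of the socle are correct and match the paper's first step. The gap is in the final paragraph, and it is a genuine one. You write that for the weight-$k$ primitive ($1\le k\le (p-3)/2$) ``its $F$-orbit does not close at dimension $p$, because $E$ fails to annihilate $F^p$ applied to the primitive.'' This is false: $F^p$ acts on $V\otimes V$ by the scalar $2\chi(F^p)=2$, so for any primitive $u$ one has $F^pu=2u$ and hence $E(F^pu)=2Eu=0$. The $F$-orbit of $u$ therefore closes at dimension $p$, and by the universal property of the baby Verma the submodule $U(\fg)\cdot u$ is \emph{simple}, isomorphic to $V_{2\chi,k}$. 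So your proposed reason for $\dim W_k\ge 2p$ collapses.

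This matters because your dimension count only checks that the claimed decomposition has total dimension $p^2$; it does not rule out alternatives. A priori the summand attached to the weight-$0$ primitives could be $V_{2\chi,0}\oplus P(V_{2\chi,0})$ (dimension $3p$) or $P(V_{2\chi,0})^{\oplus 2}$ (dimension $4p$), compensated by one or two of the $W_k$ being simple of dimension $p$; these also total $p^2$. Nothing in your argument excludes this. The paper's proof supplies exactly the missing ingredient: the operator $\Omega_{1,2}=E\otimes F+F\otimes E+\tfrac12 H\otimes H$ is a $\fg$-endomorphism of $V\otimes V$ (via $2\Omega_{1,2}=\Delta(\Omega)-\Omega\otimes 1-1\otimes\Omega$) which acts on the $V_{2\chi,a}$-isotypic block by a scalar distinguishing the simples, so the $W_k$ are precisely the generalized eigenspaces of $\Omega_{1,2}$. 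Restricting to the $(p-2)$-weight space and showing $\rank(X^n)=p-2$ for all $n$ (via $\det N=p-1\ne 0$) forces the generalized $0$-eigenspace to have dimension exactly $2p$, after which the dimension bookkeeping really does determine the rest. Without some substitute for this Casimir computation, your proof does not establish the lemma.
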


\begin{proof}  We begin by observing that if $\lambda \in \mathbb{F}_{p}$ and $\lambda \neq 0,p-2$ then the space of primitive vectors of weight $\lambda$ in $V \otimes V$ is one dimensional; if $\lambda = 0$ or $p-2$, then it is two dimensional.  Namely, since $H$ acts semisimply on the space of primitive vectors, we can assume without loss that a primitive vector is a weight vector.  Thus for a fixed $n=0, \dotsc, p-1$ we can consider the equation 
\[
0 = E \left(\sum_{i=0}^{p-1} \alpha_{i} v_{n-i} \otimes v_{i} \right).
\] By direct calculation we see that up to a nonzero scalar there is a unique solution for $\alpha_{0}, \dotsc , \alpha_{p-1}$ if $n \neq 0, 1$.  On the other hand, if $n=0$ or $1$, then we see that the equation has two linearly independent solutions.

Now, given a primitive vector of weight $\lambda$, the universal property of $V_{2\chi, \lambda}$ implies that it appears as a simple module in the socle of $V \otimes V$.  Conversely, any simple module in the socle gives rise to a pair of primitive vectors, $\lambda$ and $\lambda^{*}=p-2-\lambda$.  Hence by our calculation we have 
\[
\operatorname{socle}\left(V \otimes V \right) \cong V_{2\chi, 0} \oplus V_{2\chi, 0} \oplus V_{2\chi, 1} \oplus \dotsb \oplus V_{2\chi, (p-3)/2} \oplus V_{2\chi, p-1}.
\]

We now show that $W_{0}$ is $2p$ dimensional.  Once we do so, dimension considerations show that the decomposition of $V \otimes V$ must be as given.  Namely, $W_{0} \cong V_{2\chi, 0}\oplus V_{2\chi,0}$ and each $W_{1}, \dotsc , W_{(p-3)/2}$ can either be isomorphic to $V_{2\chi, k}$ or its projective cover (which is a nonsplit self extension of the simple module) and by dimensions they are necessarily the projective cover in every case.

Let $\Omega = EF + FE + H^{2}/2 \in U(\sll_{2}(k))$ be the Casimir element of $\sll_{2}(k)$.   We note that if for $a \in \mathbb{F}_{p}$ we set 
\[
c_{a} = a+ \frac{a^{2}}{2},
\] 
then $\Omega$ acts on the simple module $V_{2\chi, a}$ in $\cat^{[2\chi]}$ by
the scalar $c_{a}$.  Furthermore, $c_{a}= c_{b}$ if and only if $a=b$ or $a =
b^{*}$.  That is, $\Omega$ acts on each simple by a unique scalar.

Define 
\[
\Omega_{1,2} = E \otimes F + F \otimes E + (H \otimes H)/2.
\]  We then have that
\begin{equation}\label{E:Omega12}
2\Omega_{1,2}= \Delta(\Omega) - \Omega \otimes 1 - 1 \otimes \Omega
\end{equation} as a linear map on $V \otimes V$,
where $\Delta: U(\sll_{2}(k)) \to U(\sll_{2}(k))$ is the coproduct on the enveloping algebra of $\sll_{2}(k)$. Since the Casimir element is central, from \eqref{E:Omega12} we see that $\Omega_{1,2}$ is in fact a $\sll_{2}(k)$-module endomorphism.  Furthermore, if $V_{2\chi, a}$ is a simple submodule of $V_{2\chi, b} \otimes V_{2\chi, c}$, then $\Omega_{1,2}$ acts on $V_{2\chi, a}$ by the scalar $(c_{a}-c_{b}-c_{c})/2$.  In particular, since in this case $V_{2\chi, b} = V_{2\chi, c}= V_{\chi, 0}$, we have that $\Omega_{1,2}$ acts on $V_{\chi, a}$ by the scalar $c_{a}/2$.  From this it follows that each of the direct summands in \eqref{E:tensordecomp} is precisely a generalized eigenspace of $\Omega_{1,2}$ acting on $V \otimes V$.  Therefore to show $W_{0}$ is $2p$ dimensional we simply need to show that the generalized $c_{0}=0$-eigenspace has dimension $2p$.  Furthermore, since $\Omega_{1,2}$ preserves the weight spaces of $V \otimes V$ and the simple modules in $\cat^{[\chi]}$ have each weight appearing with multiplicity one, this is equivalent to showing that the $p-2$ weight space of the generalized $0$-eigenspace of $\Omega_{1,2}$ is two dimensional.  

Let $z_{i}= v_{i}\otimes v_{1-i}$ for $i=0, \dotsc , p-1$ (where the
subscripts are understood to be written modulo $p$).  Then $z_{0}, \dotsc ,
z_{p-1}$ is a basis for the $p-2$ weight space of $V \otimes V$.  In this
ordered basis we can use \eqref{E:sl2action} to write the matrix $X$ for the
action of $\Omega_{1,2}$ and doing so we obtain the $p \times p$ matrix
\[
X=\left(
  \begin{matrix} 
    0 & 0 & 0 & 0 & \dotsb & 0 & -2 \\
    0 & 0 & -2 & 0 & \dotsb & 0 & 0 \\
    0 & 0 & &&&& \\
    \vdots & \vdots & & & M & & \\
    0 & 0 &&& &&
  \end{matrix} 
\right), \hspace{.5in} N =
\left(\begin{matrix}2&1&&&&\\1&2&1&&&\\
    &1&2&1&\\ &&1&2&& \\ &&&\ddots&\ddots& \\ &&&&&1 \\ &&&&1&2\end{matrix} \right),
\] where $M$ and $N$ are $(p-2)\times (p-2)$ matricies,  $M=ND$, and $D$ is the invertible diagional matrix $D=\operatorname{diag}((i(1-i))_{i=2,...,p-1})$.
The result then follows from the fact that for any $n\in\N$,
$\rank(X^n)=\rank(M^n)=p-2$.  Indeed $M$ is invertible.  This follows from the fact that $D$ is invertible and an easy
inductive argument which shows that $\det(N)=p-1$ (alternatively, this determinant
is given by $U_{p-2}(1)=p-1$, where $U_{n}(x)$ is the degree $n$ Chebyshev
polynomial of the second kind).
\end{proof}
Using this lemma we now consider the special case of $V_{\chi, 0}$. 
\begin{proposition}\label{P:Vzero}  Let $\chi \in \fg^{*}$ be regular nilpotent and let $V_{\chi, 0}$ be the simple module in $\cat^{[\chi]}$ labelled by $0 \in \mathbb{F}_{p}$.  Then $V_{\chi, 0}$ is not ambidextrous.
\end{proposition}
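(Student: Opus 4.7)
The plan is to apply Remark~\ref{R:ambi}: since $\cat$ is symmetric, to show the canonical trace on $V = V_{\chi,0}$ is not ambidextrous it suffices to exhibit $f \in \End_{\cat}(V \otimes V)$ with $c_{V,V}\, f\, c_{V,V}^{-1} = -f$ and $\tr_L(f)\neq 0$ in $\End_{\cat}(V) = k$. The raw material is supplied by Lemma~\ref{L:nilpotenttensors}: the multiplicity-two summand $W_0 \cong V_{2\chi,0}\oplus V_{2\chi,0}$ forces $\End_{\cat}(V\otimes V)$ to contain endomorphisms in the $-1$-eigenspace of the conjugation action of $c := c_{V,V}$, and I need a specific one whose left partial trace is nonzero.

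The first step is to identify the two copies of $V_{2\chi,0}$ inside $W_0$ explicitly. From the proof of Lemma~\ref{L:nilpotenttensors}, the space of weight-zero primitive vectors in $V \otimes V$ is spanned by $v_0 \otimes v_0$ and $u_1 := \sum_{k=1}^{p-1}(-1)^{k-1}v_k \otimes v_{-k}$. A direct reindexing, using that $p$ is odd, shows $c(v_0 \otimes v_0) = v_0 \otimes v_0$ and $c(u_1) = -u_1$. Setting $W_0^+ := U(\sll_2)\cdot(v_0 \otimes v_0)$ and $W_0^- := U(\sll_2)\cdot u_1$, these are simple submodules with $c|_{W_0^{\pm}} = \pm \Id$, so $W_0 = W_0^+ \oplus W_0^-$. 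On each of the other indecomposable summands $W_k$ ($k \geq 1$) and $V_{2\chi,p-1}$, the involution $c$ must act by a scalar $\pm 1$, since an involution with two distinct eigenvalues would split an indecomposable. Next, I define $\phi \in \End_{\cat}(V\otimes V)$ to be the unique $\sll_2$-isomorphism $W_0^+ \to W_0^-$ sending $v_0 \otimes v_0 \mapsto u_1$, extended by zero on the remaining summands. Then $c \phi c^{-1} = -\phi$, so the identity \eqref{E:flipconj} yields $\tr_R(\phi) = -\tr_L(\phi)$.

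Because $V$ is absolutely simple, $\tr_L(\phi) = \alpha\, \Id_V$ for some $\alpha \in k$, and it remains to show $\alpha \neq 0$. I write $w_i^+ := F^i(v_0 \otimes v_0)$ for the weight-$(-2i)$ basis vector of $W_0^+$ and let $\beta_i$ be the scalar such that the projection of $v_i \otimes v_0$ onto $W_0^+$ equals $\beta_i\, w_i^+$; then $\phi(v_i \otimes v_0) = \beta_i F^i(u_1)$. Evaluating $\tr_L(\phi)(v_0) = \sum_i (v_i^* \otimes \Id_V)(\phi(v_i \otimes v_0))$ via $F^i(u_1) = \sum_{k,j}(-1)^{k-1}\binom{i}{j}v_{k+i-j}\otimes v_{-k+j}$ together with the identity $\sum_{k=1}^i (-1)^{k-1}\binom{i}{k} = 1$ (valid for $i \geq 1$) yields $\alpha = \sum_{i=1}^{p-1}\beta_i$.

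The step I expect to be the main obstacle is evaluating this sum, but it dissolves through $\sll_2$-equivariance of the projection $p_+ : V \otimes V \to W_0^+$. Applying $E$ to $p_+(v_i \otimes v_0) = \beta_i w_i^+$ and using $E(v_i \otimes v_0) = i(1-i) v_{i-1} \otimes v_0$ together with $E w_i^+ = -i(i-1) w_{i-1}^+$ (obtained from $[E,F^i] = iF^{i-1}H - i(i-1)F^{i-1}$ applied to the weight-zero vector $v_0 \otimes v_0$) yields the recursion $\beta_i = \beta_{i-1}$ for $i \geq 2$. For the base case, since $c$ preserves the direct sum decomposition and fixes $W_0^+$, $p_+$ commutes with $c$, so $p_+(v_0 \otimes v_1) = p_+(c(v_1 \otimes v_0)) = \beta_1 w_1^+$; combining this with $p_+(F(v_0 \otimes v_0)) = w_1^+$ gives $2\beta_1 = 1$, whence $\beta_1 = 1/2$. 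Therefore $\alpha = (p-1)/2$, which is nonzero in $\mathbb{F}_p$ because $p$ is odd, completing the proof that the canonical trace on $V_{\chi,0}$ is not ambidextrous.
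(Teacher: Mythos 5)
Your proof is correct and follows the same high-level strategy as the paper's: invoke Remark~\ref{R:ambi} (the category is symmetric, so it suffices to exhibit a morphism in the $-1$ eigenspace of conjugation by $c_{V,V}$ with nonzero partial trace), use Lemma~\ref{L:nilpotenttensors} to locate the two copies of $V_{2\chi,0}$ inside $V\otimes V$ --- one in $S^2V$, one in $\Lambda^2V$ --- and take the morphism that projects onto the symmetric copy and maps it isomorphically onto the antisymmetric one. Where you differ is in the bookkeeping. The paper anchors the morphism at the weight $p-2$ primitive vectors $v_0\otimes v_1\pm v_1\otimes v_0$ and then pins down its values on other basis vectors by exploiting the split Casimir $\Omega_{1,2}$ (using $\Omega_{1,2}\circ f=f\circ\Omega_{1,2}=0$ together with the explicit matrix $X$), landing on $\tr_R(f)=-\Id_V$. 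You anchor at the weight $0$ primitive vectors $v_0\otimes v_0$ and $u_1=\sum_{k=1}^{p-1}(-1)^{k-1}v_k\otimes v_{-k}$, reduce $\tr_L(\phi)$ to the sum of the projection coefficients $\beta_i$, and determine those by an $E$-equivariance recursion plus the $c$-symmetry for the base case $\beta_1=\tfrac12$, arriving at $\tr_L(\phi)=\tfrac{p-1}{2}\Id_V\neq 0$. The two answers are consistent (your $\phi$ and the paper's $f$ differ by the scalar $-2$), so this is an equivalent computation by a different mechanism. Your recursion route is arguably cleaner than tracking the matrix $X$, though it front-loads the need to verify that $u_1$ is actually primitive (you assert this but don't compute $Eu_1=0$; it is a routine telescoping check) and that the projection $p_+$ can be chosen to commute with both the $\sll_2$-action and with $c$ (this requires choosing the complementary summands compatibly with the $S^2\oplus\Lambda^2$ splitting, which is always possible since $\operatorname{char} k\neq 2$; the paper's construction of $f$ has the same implicit requirement). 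Neither point is a gap, but both deserve a sentence if this were written up.
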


\begin{proof} Let $V=V_{\chi, 0}$.  Since the square of the braiding is the
  identity and the characteristic of $k$ is not equal to two, by
  Remark~\ref{R:ambi} it suffices to prove that there always exists a morphism
  $f: V \otimes V \to V\otimes V$ which is between $S^{2}V$ and $\Lambda^{2}V$
  and for which $\Tr_{R}(f) \neq 0$.

  We first observe that since $v_{0}, v_{1} \in V$ are the primitive vectors
  of weight $0$ and $p-2$, respectively, the vectors $v_{0}\otimes v_{1}+
  v_{1}\otimes v_{0}$ and $v_{0}\otimes v_{1}- v_{1}\otimes v_{0}$ are both
  primitive vectors in $V \otimes V$ of weight $p-2$.  Therefore, from
  \eqref{E:tensordecomp} we see that $S^{2}V$ and $\Lambda^{2}V$ each have a
  direct summand which is isomorphic to $V_{2\chi,0}$.  Let us call $W'$
  (resp.\ $W''$) the summand which lies in $S^{2}V$ (resp.\ $\Lambda^{2}V$).
  We then have a morphism $f: V \otimes V \to V \otimes V$ which is
  identically zero on all direct summands of \eqref{E:tensordecomp} except
  on $W'$ which is mapped isomorphically to $W''$; in
  particular, it is determined by setting
  \[
  f(v_{0}\otimes v_{1})=f(v_{1}\otimes v_{0})=v_{0}\otimes v_{1}- v_{1}\otimes
  v_{0}.
  \]
  We use the notation of the proof of Lemma \ref{L:nilpotenttensors}.  The
  operator $\Omega_{1,2}$ is zero on $W''$ because it is zero on $v_{0}\otimes
  v_{1}-v_{1}\otimes v_{0}$.  Thus $\Omega_{1,2}\circ
  f=f\circ\Omega_{1,2}=0$.  This and the form of $X$ implies that
  $f(z_i)=-(-1)^i(v_{0}\otimes v_{1}- v_{1}\otimes v_{0})$ for $i=2,\ldots,p-1$.

  Furthermore $f\circ\Omega_{1,2}(v_0\otimes
  v_{i})=i(1-i)f(v_{1}\otimes v_{i-1})=0$ and we get that $f(v_{1}\otimes v_{i})=0$
  except for $i\in\{-1,0\}$.

  As $\Tr_{R}(f)=x\Id_V$ for some $x \in k$, we can now compute $x$ as follows 
  $$
  \Tr_{R}(f)(v_1)=\sum_i (\Id\otimes v_i^*)\big(f(v_1\otimes v_i)\big)
  =(\Id\otimes v_0^*)\big(f(v_1\otimes v_0)\big)+(\Id\otimes
  v_{-1}^*)\big(f(v_1\otimes v_{-1})\big).
  $$
  Now, $(\Id\otimes v_0^*)\big(f(v_1\otimes v_0)\big)=-v_1$, and as
  $F^p$ acts by $2$ on $V\otimes V$, we have 
  $$
    2 f(v_1\otimes v_{-1}) =F^{p-1}f(F(v_1\otimes v_{-1}))=F^{p-1} f(z_2+z_1)=0.
  $$
  Hence $\Tr_{R}(f)=-\Id_V\neq0$ so $V_{\chi, 0}$ is not ambidextrous.
\end{proof}

The general case now follows easily.

\begin{theorem}\label{T:sl2nonambis} If $L$ is a nonprojective simple object
  in $\cat^{[\chi]}$ and $\chi$ is nilpotent regular, then $L$ is not
  ambidextrous.
\end{theorem}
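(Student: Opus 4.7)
The plan is to reduce the theorem to the special case $L = V_{\chi, 0}$ already handled by Proposition~\ref{P:Vzero}. First, because the adjoint action of $G = GL_2(k)$ on $\fg$ is by restricted Lie algebra automorphisms, twisting by any $g \in G$ defines an autoequivalence of $\cat$ as a ribbon category, sending $\cat^{[\chi']}$ to $\cat^{[g\cdot\chi']}$ and preserving the property of being ambidextrous. Since every nilpotent regular character of $\fg$ is $G$-conjugate to the specific $\chi$ of Case III, I may assume without loss that $L$ lies in $\cat^{[\chi]}$, so that $L \cong V_{\chi, \lambda}$ for some $\lambda \in \{0, 1, \ldots, (p-3)/2\}$.

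Next I would invoke the ideal-theoretic machinery of Sections~\ref{S:generalizedtraces} and~\ref{S:ModDim}. As recalled just before Theorem~\ref{T:sl2ideals}, every non-projective simple in $\cat^{[\chi]}$ has rank variety equal to the line $kE$ in $\mathcal{V}_{\sll_2(k)}$, and for simple modules $S \in \ideal_J$ if and only if the variety of $S$ lies in the variety of $J$. Consequently
\[
\ideal_{V_{\chi, \lambda}} = \ideal_{V_{\chi, 0}}.
\]
Suppose for contradiction that $V_{\chi, \lambda}$ is ambidextrous. Theorem~\ref{T:ambi!Trace} extends its canonical trace uniquely to a trace $\{\mt_V\}$ on this common ideal, and Theorem~\ref{T:inducedtrace} then gives that $\mt_{V_{\chi, 0}}$ is an ambidextrous trace on $V_{\chi, 0}$. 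Applying Lemma~\ref{L:dim0} to the absolutely simple object $V_{\chi, 0}$ with $J = V_{\chi, \lambda}$, and using the equality of ideals,
\[
\mt_{V_{\chi, 0}}(\Id_{V_{\chi, 0}}) = \md_{V_{\chi, \lambda}}(V_{\chi, 0}) \neq 0,
\]
so $\mt_{V_{\chi, 0}}$ is a nonzero scalar multiple of the canonical trace on $V_{\chi, 0}$. Hence the canonical trace on $V_{\chi, 0}$ is itself ambidextrous, contradicting Proposition~\ref{P:Vzero}.

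The main obstacle is justifying the ideal equality $\ideal_{V_{\chi, \lambda}} = \ideal_{V_{\chi, 0}}$, which I would do by appealing to the Friedlander--Parshall rank variety classification referenced in the discussion preceding Theorem~\ref{T:sl2ideals}. Should one wish to avoid this external input, a direct alternative is to adapt the proof of Proposition~\ref{P:Vzero} to general $\lambda$: the primitive vectors of $V_{\chi, \lambda}$ are $v_0$ of weight $\lambda$ and $v_{\lambda+1}$ of weight $p-2-\lambda$, so the two vectors $v_0 \otimes v_{\lambda+1} \pm v_{\lambda+1} \otimes v_0$ are primitive in $V_{\chi, \lambda} \otimes V_{\chi, \lambda}$ of weight $p-2$, with the $+$ sign lying in $S^2 V_{\chi, \lambda}$ and the $-$ sign in $\Lambda^2 V_{\chi, \lambda}$. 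One would then define a morphism $f$ swapping the two corresponding direct summands and vanishing on all others, and expect, by the same Casimir-eigenspace decomposition as in Lemma~\ref{L:nilpotenttensors} and the same $F^p$-action manipulation as in Proposition~\ref{P:Vzero}, that $\Tr_R(f)$ is a nonzero multiple of $\Id_{V_{\chi, \lambda}}$. Together with Remark~\ref{R:ambi} and the hypothesis $p \neq 2$, this would rule out any nonzero ambidextrous trace on $V_{\chi, \lambda}$.
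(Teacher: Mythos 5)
Your argument is correct and follows essentially the same route as the paper: reduce to the representative nilpotent regular $\chi$, use the equality $\ideal_{V_{\chi,\lambda}} = \ideal_{V_{\chi,0}}$ together with Theorems~\ref{T:ambi!Trace}, \ref{T:inducedtrace} and Lemma~\ref{L:dim0} to show that ambidexterity of any nonprojective simple in $\cat^{[\chi]}$ would force $V_{\chi,0}$ to be ambidextrous, and then contradict Proposition~\ref{P:Vzero}. The paper's one-paragraph proof states exactly this chain of implications, merely leaving the ideal equality and the scalar-multiple reasoning implicit; your write-up is a careful unpacking of it.
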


\begin{proof} The key observation is that if some nonprojective simple object
  in $\cat^{[\chi]}$ is ambidextrous, then using Theorems~\ref{T:ambi!Trace}
  and~\ref{T:inducedtrace} and Lemma~\ref{L:dim0}, we would have that
  \emph{all} nonprojective simple objects in $\cat^{[\chi]}$ are ambidextrous.
  However, by the previous proposition this is not the case.
\end{proof}

\linespread{1}

\vfill

\end{document}